\theoremstyle{plain}
\newtheorem{tw}{Theorem}[section]
\newtheorem {lem} [tw]{Lemma}
\newtheorem {prop}[tw] {Proposition}
\newtheorem{cor}[tw]{Corollary}
\theoremstyle{definition}
\newtheorem {rem} [tw]{Remark}
\newcommand{\cst}{\ifmmode\mathrm{C}^*\else{$\mathrm{C}^*$}\fi}
\newcommand{\M}{\mathsf{M}}
\newcommand{\h}{\mathsf{H}}
\newcommand{\hu}{\mathsf{H}_U}
\newcommand{\hr}{\mathsf{H}_{\mathbb{R}}}
\newcommand{\hri}{\mathsf{H}_{\mathbb{R}}^{(i)}}
\newcommand{\hrj}{\mathsf{H}_{\mathbb{R}}^{(j)}}
\newcommand{\hc}{\mathsf{H}_\mathbb{C}}
\newcommand{\kr}{\mathsf{K}_{\mathbb{R}}}
\newcommand{\lr}{\mathsf{L}_{\mathbb{R}}}
\newcommand{\fth}{\mathcal{F}_T(\mathsf{H})}
\newcommand{\vonT}{\Gamma_T(\mathsf{H}_\mathbb{R}, U_t)}
 \newcommand{\N}{\mathbb{N}}
\newcommand{\R}{\mathbb{R}}
\newcommand{\C}{\mathbb{C}}
\newcommand{\bc} {\mathbb{C}}
\newcommand{\br}{\mathbb{ R}}
\DeclareMathOperator{\op}{op}
\DeclareMathOperator{\dom}{Dom}
\DeclareMathOperator{\Span}{span}
\newcommand {\id} {{\textrm{id}}}
\newcommand{\inv}{{\text{inv}}}
\newcommand{\Hil}{\mathsf{H}}
\newcommand{\oned}{\{1,\ldots,d\}}
\newcommand{\FockTH}{\mathcal{F}_T(\Hil_U)}
\newcommand{\FockTHalg}{\FockTH_{\textup{alg}}}
\newcommand{\mlg}{\mathsf{M}}
\numberwithin{equation}{section}
\theoremstyle{plain}
\begin{document}


\author[Manish Kumar]{Manish Kumar}  

\email{mkumar@impan.pl}

\title[Mixed $q$-Araki-Woods von Neumann algebras]{Conjugate variables approach to {\em mixed} $q$-Araki-Woods Algebras: Factoriality and non-injectivity}
\address{Institute of Mathematics of the Polish Academy of Sciences,
 Warsaw, Poland}

\begin{abstract}
We establish   factoriality and non-injectivity in full generality for the mixed $q$-Araki-Woods von Neumann algebra associated to a  separable real Hilbert space $\mathsf{H}_{\mathbb{R}}$ with $\dim\mathsf{H}_{\mathbb{R}}\geq 2$, a strongly continuous one parameter group of orthogonal transformations on $\mathsf{H}_\mathbb{R}$,  a direct sum decomposition $\mathsf{H}_{\mathbb{R}}=\oplus_{i}\mathsf{H}_{\mathbb{R}}^{(i)}$, and  a real symmetric matrix $(q_{ij})$ with $\sup_{i,j}|q_{ij}|<1$. This is achieved 
by first proving the existence of conjugate variables for a finite number of generators of the algebras (when $\dim\mathsf{H}_{\mathbb{R}}<\infty$), following the lines of Miyagawa-Speicher and Kumar-Skalski-Wasilewski. The conjugate variables belong to the  factors in question and satisfy certain Lipschitz conditions.
\end{abstract}

\subjclass[2020]{Primary: 46L36; Secondary  46L10, 46L53, 46L65}

\keywords{mixed $q$-Araki-Woods von Neumann algebra, factoriality, non-injectivity, conjugate variables}
\maketitle

\section{Introduction}
We study the factoriality and non-injectivity question of mixed $q$-Araki-Woods von Neumann algebras. These are  non-tracial counterpart of mixed $q$-Gaussian algebras introduced by Speicher \cite{Spe} and Bo\.zejko-Speicher \cite{BS} which followed  their definition of (non-mixed) $q$-Gaussian algebras in \cite{BS91}. Mixed $q$-Gaussian algebras are associated with  two data $(\hr, (q_{ij}))$  where $\hr$ is a real separable Hilbert space and $-1<q_{ij}=q_{ji}<1$ are real numbers for $1\leq i,j\leq \dim\hr$ such that $\sup_{i,j}|q_{ij}|<1$. The von Neumann algebra is then generated   by a family of operators $\{l_i+l_i^*\}_{1\leq i\leq \dim\hr}$ acting
 on certain deformed Fock space where $l_i$'s obey  the mixed $q_{ij}$ commutation relations:
\begin{align*}
    l_il_j^*-q_{ij}l_j^*l_i=\delta_{ij}.
\end{align*}
 The non-mixed scenario corresponds to the case where $q_{ij}=q$ for all $i,j$. The structure of these algebras have attracted  deep analysis over the decades; in  particular  factoriality of (mixed) $q$-Gaussian algebras is  work of many hands. Ricard \cite{Eric} established the factoriality of $q$-Gaussian algebras in full generality (for $\dim\hr\geq 2$) while several partial results were obtained earlier in \cite{BKS,  Ilona, Sniady}. Following the ideas of \cite{Eric}, Skalski and Wang in \cite{SW} solved the factoriality question of mixed $q$-Gaussian algebras in full generality ($\dim\hr\geq 2$) while again some partial results preceded this in \cite{Kro, NZ}. The citations listed here are by no means complete for the study of various other structural properties of the Gaussian algebras.  

Both $q$-Gaussian and mixed $q$-Gaussian von Neumann algebras are tracial and are natural deformations of Voiculescu's free Gaussian functor associated to  real Hilbert spaces. Another natural generalization of the free Gaussian functor was introduced by Shlyakhtenko \cite{Sh1} in the non-tracial framework  which can also be regarded as  analogues of injective factors coming from CAR relations. The associated algebras depend on one parameter groups of orthogonal transformations $U_t$ on  real Hilbert spaces $\hr$.  They are non-injective factors of type $\mathrm{III}$ as soon as $\dim\hr\geq 2$ and $U_t$ is non-trivial, and they are called {free Araki-Woods factors}.

Hiai in  \cite{Hiai} combined the functor of \cite{BS91} and \cite{Sh1} to produce the so called $q$-Araki-Woods von Neumann algebras associated to  $(\hr, U_t, q)$. These algebras have equally received a lot of attention, particularly because they provide a concrete approach to the study of certain type $\mathrm{ III}$ factors.  On one hand factoriality and non-injectivity  was determined completely for the free Araki-Woods algebras (i.e. when $q=0$) in \cite{Sh1}, whereas the same for $q$-deformation counterparts for $q\neq 0$ appears to be more involved, with only  partial results  obtained   over the years  in  \cite{Hiai,Brent0, BM, SW, BMRW, KSW}.  Remark that the factoriality was  established by Bikram-Mukherjee-Ricard-Wang in \cite{BMRW} for $\dim\hr\geq 3$, and for $\dim\hr\geq 2$ with $U_t$ ergodic and  $q$ large.  The complete answer to the  factoriality and non-injectivity question of such algebras  have been settled   recently by   Skalski,  Wasilewski and the author in \cite{KSW}. Unlike the free case scenario, various  structural properties of $q$-deformed algebras like fullness, absence of Cartan subalgebras, solidity etc are still far from full understanding.

In \cite{BKM}, Bikram, Kumar R. and Mukherjee further extended Hiai's construction to now include the \textit{mixed} $q$-Gaussian functor of \cite{BS} as well as the functor of \cite{Sh1}.  The concerned von Neumann algebras are called {\em mixed $q$-Araki-Woods von Neumann algebras}. Now the algebras are associated with data consisting of $(U_t, \hr^{(i)}, (q_{ij})_{i,j\in {N}}, T)$ where  ${N}$ is a countable set, $\hr^{(i)}, i\in {N}$ is a real {separable} Hilbert space, parameters $q_{ij}\in (-1,1)$  {satisfy} $\sup_{i,j\in N}|q_{ij}|<1$, $U_t$ is a strongly continuous one parameter group of orthogonal operators on $\hr:=\oplus_{i\in {N}}\hr^{(i)}$ such that each $\hri$ is invariant under $U_t$, and $T\in \mathbf{B}(\hr\otimes\hr)$ is a Yang-Baxter operator which interpolates $\hr^{(i)}\otimes\hr^{(j)}$ and $\hr^{(j)}\otimes\hr^{(i)}$ with the twist $q_{ij}$. The mixed $q$-Araki-Woods algebra $\Gamma_T(\hr, U_t)$ is then generated by a set of operators $\{a(\xi)+a^*(\xi); \xi\in \hr^{(i)} , i\in {N}\}$ acting on a deformed Fock-space, satisfying the following commutation relations:
\begin{equation*}
    a(\xi) a^*(\eta)-q_{ij}a^*(\eta)a(\xi)=\langle\xi,\eta\rangle_U
\end{equation*}
for $\xi\in \hr^{(i)}$ and $\eta\in \hr^{(j)}$ where $\langle\cdot,\cdot\rangle_U$ is a twisted scalar product on the complexification of $\hr$ with the twist arising out of the representation $U_t$ (see the formal construction in Section \ref{sec:preliminaries}).

 Previous investigations of mixed $q$-Araki-Woods algebras in \cite{BKM} established  factoriality and the type classification of $\Gamma_T(\hr, U_t)$ under different conditions i.e. whenever $U_t$ is weakly mixing or $U_t$ admits a fixed non-zero vector or when $U_t$ is almost-periodic on an infinite dimensional space $\hr$ with  the set of  eigenvalues of $A$ having a non-zero limit point where $A$ is the analytic generator of $U_t$. They also prove non-injectivity of the algebras in certain cases i.e.  when the weakly mixing part of $U_t$ is non-zero. Several other nice properties  are illustrated in \cite{BKM} mostly generalizing those  of their sister algebras of non-mixed situation.
 
 However the case when the dimension of the underlying  Hilbert space $\hr$ is finite could not be fully understood. In this paper, we finally settle  the factoriality and non-injectivity question of mixed $q$-Araki-Woods algebras in full generality.   
We summarize our main results in the following theorem which is a combination of Theorem \ref{thm:factor} and Theorem \ref{thm:noninj}:
\begin{tw}
    Let $(\hr=\oplus_{i\in{N}}\hri, U_t, (q_{ij})_{i,j\in N}, T)$ be given with $\dim\hr\geq2$, $q=\max_{i,j}|q_{ij}|<1$ and $N$ countable. Then the von Neumann algebra $\Gamma_T(\hr, U_t)$ is a non-injective factor of type
    \begin{align*}
\left\{\begin{array}{ll} \mathrm{III}_1 & \text{ if } G=\mathbb{R}^{\times}_{\ast}, \\ \mathrm{III}_{\lambda} & \text{ if } G= \lambda^{\mathbb{Z}}, 0<\lambda<1, \\ \mathrm{II}_1 & \text{ if } G=\{1\},  \end{array}   \right.	  
    \end{align*}
    where $G< \mathbb{R}^{\times}_{\ast}$ is the closed subgroup generated by the eigenvalues of the generator $A$ of  $(U_t)_{t \in \R}$.
\end{tw}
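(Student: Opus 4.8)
The plan is to follow the conjugate-variables strategy pioneered by Miyagawa–Speicher and Kumar–Skalski–Wasilewski (referred to in the abstract), reducing the type-III problem to the finite-dimensional case and then to an explicit estimate on the deformed Fock space. First I would reduce to $\dim\hr<\infty$. Since $\Gamma_T(\hr,U_t)$ is an increasing union of the subalgebras $\Gamma_T(\kr,U_t)$ over finite-dimensional $U_t$-invariant subspaces $\kr\subseteq\hr$ that are compatible with the decomposition $\hr=\oplus_i\hri$, and since factoriality passes to such inductive limits (a non-injective inclusion of factors with a faithful normal conditional expectation, coming here from the modular theory of the quasi-free state, forces the limit to be a factor too) and non-injectivity is inherited from any non-injective subalgebra, it suffices to handle $2\le\dim\hr<\infty$.

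Next, for $\dim\hr<\infty$, I would construct the conjugate variables. Writing $G_i=a(e_i)+a^*(e_i)$ for an orthonormal-type basis adapted to the $U_t$-decomposition, the goal is to produce $\xi_i\in L^2(\Gamma_T(\hr,U_t),\varphi)$ satisfying the conjugate-relation $\varphi(\xi_i\, m)=\varphi\otimes\varphi\big((\partial_i m)\big)$ for $m$ in the polynomial algebra, where $\partial_i$ is the appropriate (free-)difference quotient derivation with respect to $G_i$; one then checks $\xi_i$ lies in the von Neumann algebra itself (not merely $L^2$) and satisfies a Lipschitz-type bound $\|\xi_i-G_i\|\le C$ or a bound on the derivation $\partial_i$. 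The mechanism, exactly as in Miyagawa–Speicher for mixed $q$-Gaussians and in Kumar–Skalski–Wasilewski for $q$-Araki-Woods, is: write the candidate conjugate variable as a norm-convergent series of Wick words whose coefficients are controlled by powers of $q=\max|q_{ij}|<1$, using the positivity and invertibility of the deformation operator $T$ (equivalently of the Bożejko–Speicher-type operator on $\fth$, whose norm is bounded in terms of $q$) to bound the $L^2\to L^2$ norms of the relevant creation/annihilation combinations. The non-tracial (Araki–Woods) ingredient is absorbed by working with the analytic generator $A$ of $U_t$ and the KMS/quasi-free state $\varphi$, and by using the canonical embedding into the trace at $q$-level via Shlyakhtenko-style analysis; the matrix structure $(q_{ij})$ only enters through the uniform bound $q<1$.

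From the existence of bounded conjugate variables with finite free Fisher information one extracts the structural conclusions by now-standard free-probabilistic arguments: finite free Fisher information of a generating set of self-adjoint elements in a type-III setting (via the trace in the core, or directly via the Miyagawa–Speicher/KSW framework adapted to states) implies the algebra has no non-trivial central projections — this is the factoriality part, Theorem \ref{thm:factor} — and also rules out injectivity, since an injective von Neumann algebra with a two-bounded generator of this form would have to be hyperfinite of a type incompatible with the non-vanishing free Fisher information (the key point being $\dim\hr\ge2$ so the generators do not commute), giving Theorem \ref{thm:noninj}. Finally, the type classification itself is already essentially known: the Connes $\mathrm{T}$- and $\mathrm{S}$-invariants of $\Gamma_T(\hr,U_t)$ are computed from the spectrum of $A$ via the modular theory of $\varphi$ exactly as in \cite{Sh1,Hiai,BKM}, yielding $\mathrm{III}_1$, $\mathrm{III}_\lambda$, or $\mathrm{II}_1$ according to whether the closed subgroup $G$ generated by the eigenvalues of $A$ is $\R^\times_*$, $\lambda^{\Z}$, or $\{1\}$; once factoriality is in hand this classification is immediate.

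The main obstacle I expect is the construction and norm-control of the conjugate variables in the \emph{non-tracial} mixed setting simultaneously: one must show the Wick-word series defining $\xi_i$ converges in operator norm (not just $L^2$) and that the limit is affiliated with — in fact lies in — $\Gamma_T(\hr,U_t)$, which requires delicate combinatorial estimates combining the $q$-deformation bounds (uniform in $i,j$ through $q<1$) with the unbounded operators coming from $U_t$ and the lack of a trace. This is precisely where the technical heart of the paper must lie, and where the hypotheses $q<1$ and $\dim\hr\ge2$ are used essentially; everything downstream (factoriality, non-injectivity, type) then follows by assembling known machinery.
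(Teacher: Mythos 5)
Your overall strategy coincides with the paper's: construct conjugate variables for the finite-dimensional case as norm-convergent series of Wick words controlled by powers of $q$, feed the resulting finite free Fisher information into Nelson's abstract theorems to get factoriality, fullness and non-injectivity when $\dim\hr<\infty$, and then pass to general $\hr$ by an inductive limit over finite-dimensional invariant subspaces with $\varphi$-preserving conditional expectations. However, there are two genuine gaps in your reduction step.

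First, the claim that $\Gamma_T(\hr,U_t)$ is always an increasing union of $\Gamma_T(\kr,U_t)$ over finite-dimensional $U_t$-invariant subspaces $\kr$ is false whenever $U_t$ has a non-trivial weakly mixing part: a finite-dimensional invariant subspace of an orthogonal representation is necessarily spanned by eigenvectors of the (complexified) generator, so the union of all such subspaces is dense only in the almost periodic part of $\hr$. Your exhaustion argument therefore only covers the case where $U_t$ is almost periodic. The paper disposes of the weakly mixing case separately by quoting the earlier results of Bikram--Kumar--Mukherjee (factoriality of type $\mathrm{III}_1$ and non-injectivity when the weakly mixing part is non-zero); without some such input your proof does not treat that case at all.

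Second, "once factoriality is in hand this classification is immediate" is too quick. To read off the $S$-invariant from the spectrum of the modular operator $\Delta_\varphi$ (equivalently of $A$) one needs the centralizer $\M^\varphi$ to be a factor, not merely $\M$ itself. Your inductive-limit argument, as stated, only propagates triviality of the center. The paper instead propagates the stronger statement $(\M_n^{\varphi_n})'\cap\M_n=\C$ supplied by Nelson's theorem: for $x\in(\M^\varphi)'\cap\M$ one has $E_n(x)\in(\M_n^{\varphi_n})'\cap\M_n=\C$, hence $x=\varphi(x)$, so $\M^\varphi$ is a $\mathrm{II}_1$ factor and the type follows from the spectral data of $A$. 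You should run the limit argument at the level of the relative commutant of the centralizer rather than the center. With these two repairs (and making explicit that non-injectivity of $\M$ follows from non-injectivity of a subalgebra which is the range of a normal conditional expectation, applied to a two-dimensional invariant piece), your argument matches the paper's.
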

Our approach towards the proof utilises the concepts  of conjugate variables following the lines of arguments by Miyagawa-Speicher \cite{MS}, Kumar-Skalski-Wasilewski \cite{KSW} and Nelson \cite{Brent}. The key idea of  previous attempts {towards} factoriality in \cite{BKM} is  the study of mixing properties of certain subalgebras which allows one to control the relative commutant of the centralizer, whereas  our methods take completely different route. 

We begin with the construction and basic properties of mixed $q$-Araki-Woods von Neumann algebras in Section \ref{sec:preliminaries}. We  recollect the notions of  dual/conjugate variables in Section  \ref{sec:dual variables} and then exhibit in Theorem \ref{thm:existence of conjugate variables} the existence of  a  system of conjugate variables for a set of generators for the von Neumann algebra $\Gamma_T(\hr, U_t)$ whenever  $\dim\hr<\infty$.  The conjugate variables are shown to lie in the ambient von Neumann algebras (Corollary \ref{cor:conjugates variables are in M Omega}). 

Moreover we prove in  Corollary \ref{cor: Lipstisz conjugate} that the conjugate variables are  Lipschitz  (see Section \ref{sec:dual variables} for the definition of Lipschitz condition).
Remark that the Lipschitz condition was not addressed in \cite{KSW} even for $q_{ij}=q$ case. In this paper we do not precisely indicate an application of the Lipschitz conjugate variables, nevertheless we believe that the Lipschitz condition may provide  pleasant applications similar to the tracial case and we are working to achieve some in our upcoming work. To mention one striking application from the tracial settings, in conjugation with  abstract results of \cite{DI}, the Lipschitz condition allowed \cite{MS} to furnish a different proof for  the absence of Cartan subalgebras in $q$-Gaussian factors with finite symbols.

Finally in Section \ref{sec:main}, the existence of conjugate variables allows us to invoke the abstract results of Nelson \cite{Brent} to prove factoriality and non-injectivity of $\Gamma_T(\hr, U_t)$ for $\hr$ finite dimensional.   For a general (separable) Hilbert space $\hr$ with $U_t$ being almost periodic (remaining cases are  dealt in \cite{BKM}),  an inductive limit argument is used to deduce factoriality while some additional arguments are added to deduce non-injectivity. 

After the completion of this article, the author learnt that parts of the results about non-injectivity of $\Gamma_T(\hr, U_t)$ have been obtained independently by  \cite{BKM2}. I thank Rahul Kumar for making me aware of their results from \cite{BKM2} and for sending the draft which was not available online. More precisely  they prove non-injectivity of $\Gamma_T(\hr, U_t)$ under certain  conditions on the dimension of a fixed spectral projection of the analytic generator $A$ of $U_t$ (see \cite[Theorem 4.3, Corollary 4.4]{BKM2}). However their investigations  which still leave a large number of cases open are inspired from the ideas of \cite{Nou1}  proving the equivalent conditions of non-semi-discreetness,   and they do not take into account the point of view of conjugate and dual variables as we have done.

\section{Construction of mixed $q$-Araki-Woods von Neumann algebras}\label{sec:preliminaries}
We briefly describe the construction of mixed $q$-Araki-Woods von Neumann algebras and refer the readers to \cite{BKM} for a detailed treatment. As a convention, all Hilbert spaces are separable and inner products are linear in the second variable. For any real or complex Hilbert space $H$, $\mathbf{B}(H)$ denotes the algebra of bounded operators on $H$.

 Let $\hr$ be a real Hilbert space, and let $U_t$ be a  strongly continuous orthogonal representations of $\R$ on $\hr$. Consider the complexification $ \hc= \hr\otimes_\R\C$ of $\hr$, whose inner product is denoted by $\langle\cdot,\cdot\rangle_{\hc}$.  
 Identify $\hr$ inside $ \hc$ by $\hr\otimes 1$, so that $  \hc=\hr+i\hr$.
Given $\xi=\xi_1+i\xi_2\in  \hc$ for some $\xi_1,\xi_2\in\hr$, write the canonical involution $\bar{\xi}$ as
\begin{equation*}
    \bar{\xi}=\xi_1-i\xi_2.
\end{equation*}
We  extend $U_t$ to a strongly continuous one parameter group of unitaries on the  complexification $ \hc$, which is still denoted by $U_t$. Let $A$ denote the positive, self-adjoint, and non-singular analytic (unbounded) operator on $\hc$ obtained via Stone's theorem, which satisfies $U_t=A^{it}$ for all $t\in\R$. Note that $\overline{A\xi}=A^{-1}\bar{\xi}$ for any $\xi$ in the domain of $A$. In particular, $\sigma(A)\cap (0,\infty)$ is symmetric around $1$ i.e. for $\lambda\neq 0$, $\lambda\in\sigma(A)$ if and only if $1/\lambda\in\sigma(A)$, where $\sigma(A)$ is the spectrum of $A$.

Following Shlyakhtenko \cite{Sh1},  consider the deformed inner product $\langle\cdot,\cdot\rangle_U$ on $\hc$ defined by  
\begin{equation*}
    \langle\xi,\zeta\rangle_U=\langle \xi, 2(1+A^{-1})^{-1}\zeta\rangle\quad\forall\xi,\zeta\in \hc.
\end{equation*}
We denote the completion of $\hc$ with respect to $\langle\cdot,\cdot\rangle_U$  by $\hu$. The space $(\hr,\|\cdot\|_{ \hr})$ is naturally identified as a subspace of $\hu$ such that $\hr+i\hr$ is dense in $\hu$ and $\hr\cap i\hr=\{0\}$.
Let ${N}$ denote a countable set $\{1,\ldots,r\}$ for $r\in\N$ or $r=\infty$. Fix a decomposition  of $\hr$ 
\begin{equation*}
    \hr=\bigoplus_{i\in{N}}\hri
\end{equation*}
into a direct sum of invariant subspaces of $U_t.$ Fix $-1<q_{ij}=q_{ji}<1$ for $i,j\in{N}$ such that $\sup_{i,j\in{N}}|q_{ij}|<1.$ We will often write $q(i,j)$ instead of $q_{ij}$. For each $i,j\in N$ define the operator $T_{ij}:\hri\otimes\hrj\to\hrj\otimes\hri$ to be the bounded linear extension of the assignment \begin{equation}\label{eq:expression for T}\xi\otimes\eta\mapsto q_{ij}\eta\otimes\xi\end{equation}
for $\xi\in\hri,\eta\in\hrj$. Set $T=\oplus_{i,j\in{N}}T_{ij}\in \mathbf{B}(\hr\otimes\hr)$. Naturally $T$ extends linearly to $\hc\otimes\hc$ and then to a bounded operator on $\hu\otimes\hu$ in such a way that $T(\hu^{(i)}\otimes\hu^{(j)})\subseteq\hu^{(j)}\otimes\hu^{(i)}$. Moreover $T$ satisfies the following properties:
\begin{align*}
    &T=T^*,\\
    &\|T\|<1, \mbox{ and }\\
    &(I\otimes T)( T\otimes I)( I\otimes T)= (T\otimes I)(I\otimes T)(T\otimes I)
\end{align*}
where $I$ denotes the identity on $\hu$. The last of the above conditions is called Yang-Baxter equation. For each $k\in\N$, define $T_k\in \mathbf{B}(\hu^{\otimes {k+1}})$ by
\begin{equation}\label{eq: notation for T_k}
    T_k={\underbrace {I\otimes\cdots\otimes I}_{k-1}}\otimes T
\end{equation}
and by amplification also on $\hu^{\otimes n}$ for all $n\geq k+1$ (which again is denoted by $T_k$ by abuse of notation).
For $n\in \N$, let $S_n$ denote the symmetric group of $n$ symbols and let $\tau_k$ be the transposition between $k$ and $k+1$ for $1\leq k\leq n-1.$ Let $\Phi:S_n\to \mathbf{B}(\hu^{\otimes n})$ be the unique (quasi-multiplicative) map satisfying $\Phi(1)=I$ 
 and $\Phi(\tau_k)=T_k$, and define $P^{(n)}\in \mathbf{B}(\hu^{\otimes n})$ by 
 \begin{equation*}
     P^{(n)}=\sum_{\sigma\in S_n}\Phi(\sigma).
 \end{equation*}
Then $P^{(n)}$ is a strictly positive operator  on $\hu^{\otimes^n}$ (\cite{BS}); Consider the inner product  on $\hu^{\otimes n}$ defined by 
\begin{equation*}
    \langle \xi,\eta\rangle_T=\langle \xi, P^{(n)}\eta\rangle_{\hu^{\otimes n}}
\end{equation*}
and denote by $\hu^{\otimes_T^n}$ the completion of $\hu^{\otimes n}$ with respect to $\langle\cdot,\cdot\rangle_T$. Define the $T$-deformed Fock space $\mathcal{F}_T(\h_U)$ to be  the direct sum $\oplus_{n\geq 0}\hu^{\otimes_T^n}$, where $\hu^0$ is understood to be the one-dimensional space $\C\Omega$ for a distinguished unit vector $\Omega$. 
We now consider the free left   annihilation operator $l(\xi)$ for any $\xi\in \hu$ given on a dense subspace of $\FockTH$   by the assignment
\begin{align}\label{eq:free annihilation operator}
    l(\xi)\Omega=0, \;\;l(\xi)\xi_1\otimes\cdots\otimes\xi_n=\langle\xi,\xi_1\rangle\xi_2\otimes\cdots\otimes
    \xi_n
\end{align}
for $\xi_i\in\hu, 1\leq i\leq n, n\geq 1.$ The following lemma is crucial for our purposes (see \cite[Lemma 2.2]{MS} and \cite[Theorem 6]{Boz} for proof):

\begin{lem}\label{lem:free annihilation operators are bounded}
The free annihilation operator $l(\xi)$  extends to a  bounded linear operator on $\FockTH$ with
\begin{align*}
    \|l(\xi)\|\leq \frac{\|\xi\|_U}{\sqrt{w(q)}},\;\;\;w(q)=(1-q^2)^{-1}\prod_{k=1}^\infty\frac{(1-q^k)}{(1+q^k)}
\end{align*}
where $q=\max_{i,j\in{N}}|q_{ij}|<1$.
\end{lem}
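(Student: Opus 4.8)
The plan is to estimate $\|l(\xi)\|$ by controlling the norm of $l(\xi)$ restricted to each $n$-particle space and showing the bound is uniform in $n$. First I would observe that it suffices to prove the estimate on the algebraic Fock space and then extend by density; and since $l(\xi)$ maps $\hu^{\otimes_T^n}$ into $\hu^{\otimes_T^{n-1}}$, with these subspaces mutually orthogonal in $\fth$, it is enough to bound $\|l(\xi)|_{\hu^{\otimes_T^n}}\|$ uniformly. The key point is that the $T$-inner product on the $n$-fold tensor is $\langle\cdot,P^{(n)}\cdot\rangle$, so the whole problem reduces to comparing $P^{(n)}$ with $P^{(n-1)}$ under the map implemented by the free (undeformed) annihilation. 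Concretely, for $\xi_0\in\hu$ and a simple tensor $\eta=\xi_1\otimes\cdots\otimes\xi_n$ one has $l(\xi_0)\eta=\langle\xi_0,\xi_1\rangle\,\xi_2\otimes\cdots\otimes\xi_n$, and the square of the $T$-norm of this vector must be bounded by a constant times $\langle\eta,P^{(n)}\eta\rangle$.

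The main technical step is a recursion for $P^{(n)}$ of the Bożejko–Speicher type. One writes
\begin{equation*}
P^{(n)} = (I\otimes P^{(n-1)})\,R_n,\qquad R_n = \sum_{k=0}^{n-1} (T_1 T_2\cdots T_k),
\end{equation*}
(with the $k=0$ term equal to $I$), where $T_k$ is the amplification from \eqref{eq: notation for T_k}. From this factorization one extracts the identity that relates $l(\xi_0)$ composed with $P^{(n)}$ to $P^{(n-1)}$ composed with the ``one-sided'' operator built from $R_n$; this is exactly the content of \cite[Lemma 2.2]{MS} and \cite[Theorem 6]{Boz}. The positivity of $P^{(n)}$ (already quoted from \cite{BS}) together with the operator inequality $R_n \geq c\,I$ for a constant $c=c(q)>0$ independent of $n$ is what yields a uniform bound; tracking the constant through Bożejko's estimate produces precisely $w(q)=(1-q^2)^{-1}\prod_{k\geq1}\frac{1-q^k}{1+q^k}$ in the denominator, with $q=\max_{i,j}|q_{ij}|$ controlling the norm of $T$ and hence all the $T_k$.

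I expect the main obstacle to be establishing the uniform lower bound $R_n\geq c(q)\,I$ — equivalently, the strict positivity of $P^{(n)}$ together with a quantitative lower estimate that does not degenerate as $n\to\infty$. This is the genuinely hard inequality (it is the heart of \cite{Boz}), and it is where the specific product formula for $w(q)$ comes from; the Yang–Baxter equation is used here to make the combinatorics of $\Phi(\sigma)$ consistent so that the factorization of $P^{(n)}$ holds. Since both this estimate and the resulting norm bound are established in the cited references \cite[Lemma 2.2]{MS} and \cite[Theorem 6]{Boz}, in the present paper one simply invokes them; the only thing to check is that those arguments, stated there for the relevant deformed Fock space, apply verbatim to $\fth$ built from $\hu$ with the mixed twist $T$, which they do because the proofs only use $\|T\|<1$, $T=T^*$, and the Yang–Baxter relation — all of which we have recorded above.
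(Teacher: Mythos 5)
Your plan matches the paper exactly: the paper offers no proof of this lemma, simply citing \cite[Lemma 2.2]{MS} and \cite[Theorem 6]{Boz}, and your reduction to a uniform comparison $P^{(n)}\geq w(q)\,(I\otimes P^{(n-1)})$ on each particle space (so that $\|l(\xi)f\|_T^2=\langle f,(\theta_\xi\otimes P^{(n-1)})f\rangle\leq\|\xi\|_U^2\,\langle f,(I\otimes P^{(n-1)})f\rangle\leq w(q)^{-1}\|\xi\|_U^2\|f\|_T^2$, with $\theta_\xi$ the rank-one operator $\langle\xi,\cdot\rangle\xi$) is precisely what those references establish via the factorization $P^{(n)}=(I\otimes P^{(n-1)})R_n$. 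The one caution is that your phrase ``$R_n\geq c\,I$'' should be understood as the operator inequality for the self-adjoint conjugate $(I\otimes P^{(n-1)})^{-1/2}P^{(n)}(I\otimes P^{(n-1)})^{-1/2}$ (equivalently, a bound on $\|R_n^{-1}\|$), since $R_n$ itself is neither self-adjoint nor commuting with $I\otimes P^{(n-1)}$; this is exactly how the cited arguments proceed, and they do apply verbatim here because they use only $T=T^*$, $\|T\|\leq q<1$ and the Yang--Baxter relation.
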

We next consider the $T$-{\em deformed creation and annihilation operators} on $\FockTH$, denoted $a^*(\xi)$ and $a(\xi)$ respectively
for $\xi\in \h$,  given by the assignments
\begin{align}\label{eq:definition of left creation and annihilation operators}
\begin{split}
&a^*(\xi)\Omega=\xi,\;\;a^*(\xi)(\xi_1\otimes\ldots\otimes\xi_n)=\xi\otimes\xi_1\otimes\cdots\xi_n,\;\;\;\text{and}\\
&a(\xi)\Omega=0,\;\; a(\xi)(\xi_1\otimes\cdots\otimes\xi_n)=l(\xi)(1+T_1+T_1T_2+\ldots+T_1\ldots T_{n-1})\xi_1\otimes\ldots\otimes\xi_n
\end{split}
\end{align}
for $\xi_1\otimes\cdots\otimes\xi_n\in\hu^{\otimes n}, n\geq 1$, 
where $l(\xi)$ is  the free annihilation operator as in \eqref{eq:free annihilation operator} and each $T_k\in \mathbf{B}(\hu^{\otimes n})$ is the operator as in \eqref{eq: notation for T_k} for $1\leq k\leq n$. The operators $a^*(\xi)$ and $a(\xi)$ extend to bounded linear maps on $\FockTH$ and they are adjoint to each other satisfying $\|a(\xi)\|=\|a^*(\xi)\|\leq \|\xi\|_U(1-q)^{-1/2}$ (see \cite{BS}). One easily verifies that if $\xi_{k}\in \hu^{(i_k)}$ for $i_k\in {N}$ and $1\leq k\leq n$, then 
\begin{align}\label{eq:annihilation operator acting on simple tensors}
    a(\xi)(\xi_n\otimes\cdots\otimes\xi_1)=\sum_{k=1}^n\langle\xi,\xi_k\rangle_U\prod_{t=1}^{n-k}q({i_k, i_{t+k}})\xi_n\otimes\cdots\otimes\widehat{\xi}_k\otimes\cdots\otimes\xi_1
\end{align}
where $\hat{\xi}_k$ means the absence of the vector $\xi_k$.
We adopt the following notations in order to minimize the length of calculations:
\begin{itemize}
    \item $\xi_1\cdots\xi_n=\xi_1\otimes\cdots\otimes\xi_n$ for any $\xi_i\in\hu, 1\leq i\leq n$.
    \item $C_q=\prod_{n=1}^\infty \frac{1}{1-q^n}$ for $0<q<1$.
    \item $[n]_q=1+q+\cdots+q^{n-1}$ for $n\geq 1$ and $[0]_q=1$.
    \item $[n]_q!=\prod_{k=1}^n[k]_q$ for $n\geq 1$ and $[0]_q!=1$.
\end{itemize}
We now mention an explicit description for the inner product of two simple tensors. 
  We could not trace anywhere a formula written in the following form, so we include the proof for completeness (also see \cite[Lemma 2.1]{BKM}).
\begin{lem}
For $\xi_k\in \hu^{(i_k)}$ and $\eta_k\in \hu^{(j_k)}$, $i_k, j_k\in {N}, 1\leq k\leq n$,  we have
\begin{align}\label{eq:formula for inner product of simple tensors}\begin{split}\langle\xi_n\cdots\xi_1, \eta_n\cdots\eta_1\rangle_T &=\sum_{\sigma\in S_n}\prod_{\underset{\sigma(u)>\sigma(v)}{1\leq u<v\leq n}}q(i_u, i_v)\prod_{t=1}^n\langle\xi_t, \eta_{\sigma(t)}\rangle_U\\
    &=\sum_{\sigma\in S_n}\prod_{\underset{\sigma(u)>\sigma(v)}{1\leq u<v\leq n}}q(j_{\sigma(u)}, j_{\sigma(v)})\prod_{t=1}^n\langle\xi_t, \eta_{\sigma(t)}\rangle_U\\
    &=\sum_{\sigma\in S_n}\prod_{\underset{\sigma(u)>\sigma(v)}{1\leq u<v\leq n}}q(j_u, j_v)\prod_{t=1}^n\langle\xi_{\sigma(t)}, \eta_{t}\rangle_U\\
    &=\sum_{\sigma\in S_n}\prod_{\underset{\sigma(u)>\sigma(v)}{1\leq u<v\leq n}}q(i_{\sigma(v)}, i_{\sigma(u)})\prod_{t=1}^n\langle\xi_{\sigma{(t) }}, \eta_{t}\rangle_U.
    \end{split}
\end{align}
\end{lem}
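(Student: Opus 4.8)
The plan is to reduce the statement to an explicit description of how each operator $\Phi(\sigma)$ acts on simple tensors, feed that into the definition $\langle\cdot,\cdot\rangle_T=\langle\cdot,P^{(n)}\cdot\rangle_{\hu^{\otimes n}}$, and then pass between the four formulas by elementary reindexings of the sum over $S_n$.

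First I would prove that, for $\zeta_p\in\hu^{(k_p)}$,
\[
\Phi(\sigma)(\zeta_1\otimes\cdots\otimes\zeta_n)=\Bigl(\prod_{\substack{1\le a<b\le n\\ \sigma(a)>\sigma(b)}}q(k_a,k_b)\Bigr)\,\zeta_{\sigma^{-1}(1)}\otimes\cdots\otimes\zeta_{\sigma^{-1}(n)},
\]
by induction on the Coxeter length of $\sigma$. The case $\sigma=1$ is trivial; for the inductive step write $\sigma=\tau_k\sigma'$ with length increasing, so that $\Phi(\sigma)=T_k\Phi(\sigma')$, and observe that applying $T_k$ to the inductive expression interchanges the two tensor legs in positions $k,k+1$ and introduces the factor $q$ of the two components then occupying those legs, by \eqref{eq:expression for T}; the permuted tensor becomes the one indexed by $(\tau_k\sigma')^{-1}=\sigma^{-1}$, and the extra $q$-factor is exactly what the identity $\mathrm{Inv}(\sigma)=\mathrm{Inv}(\sigma')\sqcup\{((\sigma')^{-1}(k),(\sigma')^{-1}(k+1))\}$ (valid precisely when left multiplication by $\tau_k$ raises the length) predicts. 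Well-definedness of $\Phi$, which is what makes $P^{(n)}=\sum_\sigma\Phi(\sigma)$ meaningful, is the quasi-multiplicativity already recorded and ultimately rests on the Yang-Baxter relation for $T$.

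Next I would note that the subspaces $\hu^{(i)}$ are pairwise $\langle\cdot,\cdot\rangle_U$-orthogonal: the $\hri$ are $U_t$-invariant, hence their complexifications are $A$-invariant, and $2(1+A^{-1})^{-1}$ is a bounded function of $A$; consequently $\langle\xi_t,\eta_{\sigma(t)}\rangle_U=0$ whenever $i_t\neq j_{\sigma(t)}$. Now expand $\langle\xi_n\cdots\xi_1,\eta_n\cdots\eta_1\rangle_T=\sum_{\sigma\in S_n}\langle\xi_n\cdots\xi_1,\Phi(\sigma)(\eta_n\cdots\eta_1)\rangle_{\hu^{\otimes n}}$, insert the closed form for $\Phi(\sigma)$, and keep in mind that the $p$-th leg of $\xi_n\cdots\xi_1$ carries $\xi_{n+1-p}$ (and likewise for $\eta$). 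An elementary reindexing — conjugating the summation index $\sigma$ by the order-reversing involution $k\mapsto n+1-k$, which the reversed tensor convention $\xi_n\cdots\xi_1$ forces on us — turns the sum into the second of the four formulas. The first formula follows immediately, since only $\sigma$ with $i_t=j_{\sigma(t)}$ for all $t$ contribute, and for such $\sigma$ one has $q(j_{\sigma(u)},j_{\sigma(v)})=q(i_u,i_v)$. Finally, substituting $\sigma\mapsto\sigma^{-1}$ turns $\prod_t\langle\xi_t,\eta_{\sigma(t)}\rangle_U$ into $\prod_t\langle\xi_{\sigma(t)},\eta_t\rangle_U$ and, after re-expressing the inversions of $\sigma^{-1}$ in terms of those of $\sigma$ and using $q_{ij}=q_{ji}$, converts the second and the first formulas into the third and the fourth.

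The hard part is purely combinatorial bookkeeping: establishing the closed form for $\Phi(\sigma)$, and then correctly tracking how the inversion set behaves under the two reindexings (conjugation by the order-reversal, and $\sigma\mapsto\sigma^{-1}$). Nothing here is deep — once the closed form is secured the rest is a mechanical computation — which is why the statement deserves only a short self-contained verification.
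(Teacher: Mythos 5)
Your proposal is correct, but it takes a genuinely different route from the paper. You compute $\langle\cdot,\cdot\rangle_T$ directly from the definition $P^{(n)}=\sum_{\sigma}\Phi(\sigma)$, which requires first establishing the closed form
$\Phi(\sigma)(\zeta_1\otimes\cdots\otimes\zeta_n)=\bigl(\prod_{a<b,\,\sigma(a)>\sigma(b)}q(k_a,k_b)\bigr)\zeta_{\sigma^{-1}(1)}\otimes\cdots\otimes\zeta_{\sigma^{-1}(n)}$ by induction on Coxeter length; your inductive step and the inversion-set identity you invoke are both correct, and I checked that after conjugating the summation index by the order reversal (forced by the $\xi_n\otimes\cdots\otimes\xi_1$ convention) and using $q_{ij}=q_{ji}$ together with \eqref{eq:2.3}-type reindexing, your expression does land exactly on the second of the four displayed formulas. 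The paper instead proves only the equivalence of the four right-hand expressions by the orthogonality and reindexing arguments you also use, and then establishes the equality with the left-hand side by induction on $n$, via the adjointness $\langle\zeta,a^*(\eta)\omega\rangle_T=\langle a(\eta)\zeta,\omega\rangle_T$ and the already-recorded action \eqref{eq:annihilation operator acting on simple tensors} of the deformed annihilation operator; this trades your length-of-$\sigma$ induction for a length-of-tensor induction and avoids ever writing down $\Phi(\sigma)$ explicitly. The combinatorial content is ultimately the same (the paper's formula for $a(\xi)$ encodes $\Phi$ of a cycle), but your version is more self-contained with respect to the definition of $P^{(n)}$, while the paper's is shorter given the machinery it has already set up. One point you should make explicit if you write this up: the well-definedness of the quasi-multiplicative $\Phi$ (i.e.\ independence of the chosen reduced word), which you correctly attribute to the Yang--Baxter relation, is genuinely needed before the closed form can be asserted.
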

\begin{proof}
Note first that the last four expressions are indeed the same; for example to check that the first two expressions on the right side are same,  observe that  each term in the first sum of these is non-zero  only if $\langle\xi_t,\eta_{\sigma(t)}\rangle_U\neq0$ for all $1\leq t\leq n$ which is possible  only if $i_t=j_{\sigma(t)}$, in which case we have $q(i_u, i_v)=q(j_{\sigma(u)}, j_{\sigma(v)})$ for $1\leq v,u\leq n$. Similarly, the first sum and the last sum on the right are equal by noting that
\begin{equation}\label{eq:2.3}
\prod_{\underset{\sigma(u)>\sigma(v)}{1\leq u<v\leq n}}q(i_{u}, i_{v})=\prod_{\underset{\sigma^{-1}(r)>\sigma^{-1}(s)}{1\leq r<s\leq n}}q(i_{\sigma^{-1}(r)}, i_{\sigma^{-1}(s)})
\end{equation}
for any $\sigma\in S_n$. We shall prove equality of
the  left hand side with the first on the right. The 
    proof is by induction on $n$. For $n=1$ the equality is obvious. So assume them to be true for some  $n\geq 1$; then  calculate
    \begin{align*}
\langle&\xi_{n+1}\cdots\xi_1,\eta_{n+1}\cdots\eta_1\rangle_T=\langle\xi_{n+1}\cdots\xi_1,a^*(\eta_{n+1})\eta_n\cdots\eta_1\rangle_T\\
&=\langle a({\eta_{n+1}})( \xi_{n+1}\cdots\xi_1), \eta_{n}\cdots\eta_1\rangle_T\\
        &=\left\langle\sum_{k=1}^{n+1}\langle\eta_{n+1},\xi_{k}\rangle_U\prod_{t=1}^{n+1-k}q(i_k, i_{k+t})\xi_{n+1}\cdots\widehat{\xi}_k\cdots\xi_1, \eta_n\cdots\eta_1\right\rangle_T\quad\quad\quad(\text{by eq. }\eqref{eq:annihilation operator acting on simple tensors})\\
&=\sum_{k=1}^{n+1}\langle\xi_{k},\eta_{n+1}\rangle_U\prod_{t=1}^{n+1-k}q(i_k, i_{k+t})\left\langle\xi_{n+1}\cdots\widehat{\xi}_k\cdots\xi_1, \eta_n\cdots\eta_1\right\rangle_T\\
&=\sum_{k=1}^{n+1}\prod_{t=1}^{n+1-k}q(i_k, i_{k+t})\sum_{\sigma\in S_n}\prod_{\underset{\sigma(u)>\sigma(v)}{1\leq u<v\leq n}}q(i_{u}, i_{v})\;\langle\xi_k,\eta_{n+1}\rangle_U\left\langle\xi_{n+1}\cdots\widehat{\xi}_k\cdots\xi_1,\eta_{\sigma(n)}\cdots\eta_{\sigma(1)}\right\rangle_{\h_U^{\otimes n}}\\
&=\sum_{k=1}^{n+1}\prod_{t=1}^{n+1-k}q(i_k, i_{k+t})\sum_{\underset{\sigma(k)=n+1}{\sigma\in S_{n+1}}}\prod_{\underset{\sigma^{-1}(u)>\sigma^{-1}(v)}{1\leq u<v\leq n}}q(i_{\sigma^{-1}(u)}, i_{\sigma^{-1}(v)})\left\langle\xi_{n+1}\cdots\xi_1,\eta_{\sigma(n+1)}\cdots\eta_{\sigma(1)}\right\rangle_{\h_U^{\otimes {n+1}}}
    \end{align*}
    where in the last equality we have used \eqref{eq:2.3}, and for any fixed $k\in \{1,\ldots,n\}$ we have identified any permutation $\sigma \in S_n$ with a permutation, say $\sigma^{(k)}$, in $ S_{n+1}$ by declaring $\sigma^{(k)}(k)=n+1$  and keeping the order preserved, that is, 
    \begin{align*}
\sigma^{(k)}(t)= \left\{\begin{array}{ll}
         \sigma( t)& \mbox{ if }1\leq t<k,  \\
          n+1 & \mbox{ if }t=k,\\
         \sigma( t-1)+1&\mbox{ if }k<t\leq n+1.
      \end{array}  \right.\end{align*} 
      Note that the extra crossings that occur in the permutation $\sigma^{(k)}$ as compared to $\sigma$ are exactly those between the pairs $(k, n+1)$ and $(t, \sigma^{(k)}(t))$ for $t>k$; in such case we get from \eqref{eq:2.3} (and by setting $\tau=\sigma^{(k)}$) that
      \begin{align*}
\prod_{\underset{\tau(u)>\tau(v)}{1\leq u<v\leq n+1}}q(i_{u}, i_{v}) &=\prod_{\underset{\tau^{-1}(u)>\tau^{-1}(v)}{1\leq u<v\leq n+1}}q(i_{\tau^{-1}(v)}, i_{\tau^{-1}(u)})\\ &=\prod_{\underset{\tau^{-1}(u)>\tau^{-1}(v)}{1\leq u<v\leq n}}q(i_{\tau^{-1}(v)}, i_{\tau^{-1}(u)})\cdot\prod_{t=k+1}^{n+1}q(i_k, i_t).
      \end{align*}
     Finally we obtain
      \begin{align*}
\langle\xi_{n+1}\cdots\xi_1,\eta_{n+1}\cdots\eta_1\rangle_T&=  
\sum_{k=1}^{n+1}\sum_{\underset{\sigma(k)=n+1}{\sigma\in S_{n+1}}}\prod_{\underset{\sigma(u)>\sigma(v)}{1\leq u<v\leq n+1}}q(i_{u}, i_{v})\langle\xi_{n+1}\cdots\xi_1,\eta_{\sigma(n+1)}\cdots\eta_{\sigma(1)}\rangle_{\h_U^{\otimes {n+1}}}\\
&=\sum_{{\sigma\in S_{n+1}}}\prod_{\underset{\sigma(u)>\sigma(v)}{1\leq u<v\leq n+1}}q(i_{u}, i_{v})\langle\xi_{n+1}\cdots\xi_1,\eta_{\sigma(n+1)}\cdots\eta_{\sigma(1)}\rangle_{\h_U^{\otimes {n+1}}}
      \end{align*}
      which is exactly what was to be shown.
\end{proof}
For each $\xi\in\hr$, consider the operator $s(\xi)=a(\xi)+a^*(\xi)\in\mathbf{B}(\fth)$ and denote by $\Gamma_T(\hr, U_t)$ the von Neumann algebra generated by $\{s(\xi);\xi\in\hr\}$, which we call as {\em mixed $q$-Araki-Woods von Neumann algebra}. We will often denote this von Neumann algebra by $\M_T$ (or simply by $\mathsf{M}$ if $T$ is clear from the context). The vacuum vector $\Omega$ is a cyclic and separating vector for $\M_T$; so $\M_T$ is in standard form as an algebra acting on $\FockTH$. Let $\varphi$ denote the canonical normal faithful state $\langle\Omega,\cdot\Omega\rangle$ on $\vonT$. Denote by $L^2(\M_T)$ (or by $L^2(\M_T,\varphi)$) the GNS Hilbert space of $\M_T$ with respect to the state $\varphi$.  As mentioned above {$L^2(\M_T)$ is naturally identified with $\FockTH$}. Denote  by $\FockTHalg$ the subspace  spanned by finite simple tensors. Then $\FockTHalg$ is contained in $\M_T\Omega$ (see \cite{BKM}); hence for all $\xi\in\FockTHalg$, there is a unique operator $W(\xi)\in\M_T$ such that $W(\xi)\Omega=\xi$.  We refer the readers to \cite{BKM} for modular theory and other basic facts about the algebras $\M_T$.

\section{Dual and conjugate variables for mixed $q$-Araki-Woods algebras}\label{sec:dual variables}

\label{sec:dc}

In this section we will consider only the case of finite-dimensional $\Hil_\br$ and an orthogonal representation $U_t$ acting on $\hr$. Assume then that ${N}$ is a finite set and $\hr=\oplus_{i\in{N}}\hri$ where each $\hri$ is a $U_t$-invariant finite  dimensional real Hilbert space. Fix $-1<q(i,j)=q(j,i)<1$ for $i,j\in {N}$, let $q=\max_{i,j\in {N}}|q(i,j)|<1$ and consider the operator $T$ on $\mathbf{B}(\hu\otimes\hu)$ as defined in \eqref{eq:expression for T}. Assume that  $\dim{\hr}=d$. For this whole Section, we fix an orthonormal basis  $\{e_{1},\ldots,e_d\}$  of $\hu$ constructed in such a way that each $e_k\in \h_\C^{(j_k)}$ for some $j_k\in {N}$ (the basis can be chosen so as to contain eigenvectors of the analytic generator $A$ of $U_t$; see \cite{Sh1, Hiai}). For each $1\leq k\leq d$, set $A_k=W(e_k)$. Note that the set $\{A_k\}_{1\leq k\leq d}$ is  algebraically free   such that the unital $*$-subalgebra  $\C[A_1,\ldots,A_d]$ generated by the set is strongly dense in   $\M_T=\vonT$. 

We say that a tuple  $(D_1,\ldots,D_d)$ of unbounded operators on $\FockTH$ with $\FockTHalg$ contained in their domains and $\Omega$ contained in the domain of their adjoints is a {\em (normalized) dual system} for $(A_1, \ldots, A_d)$ if for all $k,\ell  \in \oned$ 
\[{[D_k ,A_\ell ]}=\langle\bar{e}_\ell , e_k\rangle_U P_{\bc \Omega}=\varphi(A_\ell A_k)P_{\bc \Omega}\;\;\mbox{ and } D_k \Omega=0,\]
where $P_{\C\Omega}$ is the projection onto the one-dimensional subspace $\C\Omega$, and as before $\Bar{e}_\ell$ is the complex conjugate of $e_\ell$. The existence of dual system ensures that each  $D_k$ is a closable operator on $\FockTH$.

 Next recall that if $\mathsf{B}\subseteq\M_T$ is a $\ast$-subalgebra, then a \emph{derivation} is a $\C$-linear map $\delta:\mathsf{B}\to\M_T\otimes\M_T^{\op}$ which satisfies the Leibniz rule: $\delta(ab)=a\delta(b)+b\delta(a)$ for all $a,b\in\mathsf{B}$. Here $\M_T^{\op}$ denotes the opposite algebra equipped with the natural state $\varphi^{\op}$. Then the {\em quasi-free difference quotients} $\partial_{k}$ are defined as unique derivations from $\mathbb{C}[A_1,\dots, A_d]$ into $\M_T \overline{\otimes} \M_T^{\op}$ such that \[\partial_k(A_\ell ) := \varphi(A_\ell A_k)\mathds{1}\otimes\mathds{1}\]
 for all $k,\ell \in \oned$, where $\mathds{1}$ is the identity of $\M_T$. The \emph{conjugate variable} will be a vector $\xi_k \in L^{2}(\M_T)$ such that 
\[
\langle \xi_k, x\mathds{1}\rangle_{L^2(\M_T)} = \langle \mathds{1}\otimes\mathds{1}, \partial_k(x)\mathds{1}\otimes \mathds{1}\rangle_{L^2(M_T\otimes \M_T^{\op})} 
\]
for all $x \in \C[A_1,\ldots, A_d]$. Remark that the existence of the conjugate variable is equivalent to $\mathds{1}\otimes\mathds{1}\in \dom(\partial_k^*)$ when $\partial_k$ is considered an (unbounded) operator from $L^2(\M_T)$ to $L^2(\M_T\overline{\otimes}\M_T^{\op})$; in this case it is given precisely by $\xi_k=\partial_k^*(\mathds{1}\otimes\mathds{1})$.   We also recall that the existence of dual variables implies existence of conjugate variables:

\begin{prop}[{See \cite[Theorem 2.5]{MS}, \cite[Proposition 3.1]{KSW}}]
Suppose that $(D_1,\dots, D_d)$ is a normalized dual system for $(A_1,\dots, A_d)$. Then $(D_{1}^{\ast}\Omega,\dots, D_{d}^{\ast}\Omega)$ are conjugate variables for $(A_1,\dots, A_d)$.
\end{prop}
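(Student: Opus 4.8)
The goal is to show that if $(D_1,\dots,D_d)$ is a normalized dual system for $(A_1,\dots,A_d)$, then each $D_k^*\Omega$ is the conjugate variable $\xi_k=\partial_k^*(\mathds{1}\otimes\mathds{1})$. The natural strategy is to verify the defining identity $\langle D_k^*\Omega,\, x\Omega\rangle_{L^2(\M_T)} = \langle \mathds{1}\otimes\mathds{1},\, \partial_k(x)(\mathds{1}\otimes\mathds{1})\rangle$ for every $x$ in the dense $*$-algebra $\C[A_1,\dots,A_d]$, by induction on the length of the word $x=A_{\ell_1}\cdots A_{\ell_m}$. The base case $x=\mathds{1}$ is immediate ($D_k\Omega=0$ gives $0$ on both sides), and the case $m=1$ is exactly the dual-system relation $[D_k,A_\ell]\Omega = \varphi(A_\ell A_k)\Omega$ paired with $\Omega$, which matches $\partial_k(A_\ell)=\varphi(A_\ell A_k)\mathds{1}\otimes\mathds{1}$.

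The inductive step is the heart of the argument. Writing $x = A_{\ell_1} y$ with $y=A_{\ell_2}\cdots A_{\ell_m}$, one uses the commutator identity $D_k A_{\ell_1} = A_{\ell_1}D_k + [D_k,A_{\ell_1}]$ to compute
\[
\langle D_k^*\Omega,\, A_{\ell_1}y\Omega\rangle = \langle \Omega,\, D_k A_{\ell_1} y\Omega\rangle = \langle \Omega,\, A_{\ell_1} D_k y\Omega\rangle + \langle \Omega,\, [D_k,A_{\ell_1}]\, y\Omega\rangle.
\]
The second term equals $\varphi(A_{\ell_1}A_k)\langle\Omega, P_{\C\Omega}y\Omega\rangle = \varphi(A_{\ell_1}A_k)\varphi(y)$, which is precisely the contribution of the ``left leg'' term of the Leibniz expansion $\partial_k(A_{\ell_1}y)=A_{\ell_1}\partial_k(y)+\partial_k(A_{\ell_1})(1\otimes y)$ evaluated against $\mathds{1}\otimes\mathds{1}$. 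For the first term, one rewrites $\langle\Omega, A_{\ell_1}D_k y\Omega\rangle = \langle A_{\ell_1}^*\Omega,\, D_k y\Omega\rangle = \langle W(e_{\ell_1})^*\Omega,\, D_k y\Omega\rangle$ and, using that $A_{\ell_1}^*\Omega = \overline{e}_{\ell_1}$ lives in the first chaos, reduces this to (a linear combination of) applications of the inductive hypothesis to the word $y$; the bookkeeping uses the action of $a(\overline{e}_{\ell_1})$ together with the non-tracial inner product $\langle\cdot,\cdot\rangle_U$, and it is designed to reproduce exactly the ``right leg'' term $\langle\mathds{1}\otimes\mathds{1},\, A_{\ell_1}\partial_k(y)(\mathds{1}\otimes\mathds{1})\rangle$ after identifying $\partial_k^*(\mathds{1}\otimes\mathds{1})$ with $D_k^*\Omega$ on shorter words.

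The main obstacle is precisely this last reduction: one must show that the quasi-free derivation $\partial_k$ and the dual operator $D_k$ are ``compatible'' in the sense that the recursive structure of $\partial_k$ (Leibniz rule) is mirrored by the recursive structure of $D_k$ acting on the Fock space (commutator relations plus annihilation), and that the non-tracial twist $\langle\cdot,\cdot\rangle_U$ coming from the Araki-Woods deformation does not spoil the matching. Concretely, one needs a clean formula for $\partial_k$ on a general word $A_{\ell_1}\cdots A_{\ell_m}$ (an iterated Leibniz expansion into a sum of elementary tensors $A_{\ell_1}\cdots A_{\ell_{s-1}}\otimes A_{\ell_{s+1}}\cdots A_{\ell_m}$ with coefficient $\varphi(A_{\ell_s}A_k)$), and then check that pairing this with $\mathds{1}\otimes\mathds{1}$ produces $\sum_s \varphi(A_{\ell_s}A_k)\,\varphi(A_{\ell_1}\cdots A_{\ell_{s-1}})\,\varphi(A_{\ell_{s+1}}\cdots A_{\ell_m})$ — whereas expanding $\langle\Omega, D_k A_{\ell_1}\cdots A_{\ell_m}\Omega\rangle$ via repeated use of $[D_k,A_{\ell_s}]=\varphi(A_{\ell_s}A_k)P_{\C\Omega}$ and $D_k\Omega=0$ yields the same sum. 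Since $D_k$ pushed past $A_{\ell_1},\dots,A_{\ell_{s-1}}$ picks up a term in which $P_{\C\Omega}$ forces the left part to be evaluated in the vacuum state (giving $\varphi(A_{\ell_1}\cdots A_{\ell_{s-1}})$) and leaves $\Omega$ to the right for the remaining $A_{\ell_{s+1}}\cdots A_{\ell_m}$ (giving $\varphi(A_{\ell_{s+1}}\cdots A_{\ell_m})$), the two computations agree term by term. Packaging this as a single induction, with the elementary-tensor expansion of $\partial_k$ established first as a lemma, is the cleanest route; the only delicate point is justifying the manipulations of unbounded operators, which is where the standing assumption $\FockTHalg\subseteq\dom(D_k)$ and $\Omega\in\dom(D_k^*)$ is used.
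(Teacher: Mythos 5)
Your final paragraph contains the correct and complete argument, and it is the standard one (the paper itself does not reprove this proposition but cites \cite{MS} and \cite{KSW}, where exactly this telescoping computation is carried out): writing $x=A_{\ell_1}\cdots A_{\ell_m}$, one pushes $D_k$ through the word using $[D_k,A_{\ell_s}]=\varphi(A_{\ell_s}A_k)P_{\C\Omega}$ at each step and $D_k\Omega=0$ at the end, so that $\langle\Omega, D_k x\Omega\rangle=\sum_s\varphi(A_{\ell_s}A_k)\varphi(A_{\ell_1}\cdots A_{\ell_{s-1}})\varphi(A_{\ell_{s+1}}\cdots A_{\ell_m})$, which coincides with the pairing of $\mathds{1}\otimes\mathds{1}$ against the iterated Leibniz expansion of $\partial_k(x)$. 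One caveat: the intermediate route you sketch in the second paragraph, where you handle $\langle\Omega, A_{\ell_1}D_k y\Omega\rangle$ by moving $A_{\ell_1}$ to the other side of the inner product and invoking the inductive hypothesis for $y$, does not go through as stated --- the hypothesis only controls $\langle\Omega, D_k y\Omega\rangle$, not $\langle \bar e_{\ell_1}, D_k y\Omega\rangle$, so you would need a strictly stronger induction hypothesis there. Since the direct telescoping supersedes that step entirely, the proof is fine provided you discard the second-paragraph reduction and keep only the term-by-term computation.
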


\subsection*{Existence of Dual variables}
We now show the existence of a system of dual variables for the tuple $(A_1,\ldots,A_d)$ of operators corresponding to a fixed orthonormal basis $\{e_1,\ldots,e_d\}$ of $\hu$. We follow the lines of investigation as in \cite{MS, KSW}. In order to keep this article self-contained, we repeat here some of the notions considered in \cite{MS}. Firstly we recall below the rules  of drawing partitions of the set $\{0,1,\ldots,n\}$ consisting of singletons and pairs:

\begin{enumerate}
    \item Consider $n+1$ vertices $n>n-1>\cdots>1>0$.
    \item $0$ must be connected to some $k\in \{1,\ldots,n\}$ with height $1$.
    \item $\ell\in \{1,\ldots,k-1\}$ must be coupled with one of $\{k+1,\ldots,n\}$ with height $\ell+1$.
    \item Vertices which are not coupled with $\{1,\ldots,k-1\}$ should be singletons and are drawn with straight lines to the top.
\end{enumerate}
Define $B(n+1)$ as the set of partitions that satisfy the above rules. For $\pi\in B(n+1)$,  denote by $p(\pi)$ the set of pairings in $\pi$, and by $s(\pi)$ the set of singletons in $\pi$. Also for any $k\in \{0,\ldots, n\}$, write $\pi(k)=\ell$ if $(k,\ell )\in p(\pi)$ and $\pi(k)=k$ if $k\in s(\pi)$.

  We set some notations. Write $[d]=\{1,\ldots, d\}$ and 
  let $[d]^*$ denote the set of words in $[d]$. Write
  \begin{equation*}
      e_w=e_{\alpha_1}\cdots e_{\alpha_n}=e_{\alpha_1}\otimes\ldots \otimes e_{\alpha_n}\;\;\mbox{ and }\;\; e_0=e_{()}=\Omega,
  \end{equation*} for any word $w=\alpha_1\cdots \alpha_n\in [d]^*$ of length $n\geq1$ and empty word $()$. The set $\{e_w; w\in [d]^*\}$ is linearly independent and spans $\FockTHalg$.
Observe that for any $\alpha\in [d]$ and for any word $\alpha_n\cdots\alpha_1\in [d]^*$ of length $n\geq1$,  the expression $A_\alpha=W(e_\alpha)=a^*(e_\alpha)+a(\bar{e}_\alpha)$  yields
 \begin{align}\label{eq: formula for A_alpha}
 \begin{split}
    A_\alpha(e_{\alpha_n \ldots \alpha_1} )&=e_\alpha e_{\alpha_n\cdots\alpha_1}+\sum_{k=1}^{n}\langle \bar{e}_\alpha, e_{\alpha_k}\rangle_U\prod_{k<t\leq n}q(i_k, i_{t})e_{\alpha_n\cdots\widehat{\alpha}_k\ldots \alpha_1}\\
     &=e_\alpha e_{\alpha_n\cdots\alpha_1}+\sum_{k=1}^{n}\langle \bar{e}_\alpha, e_{\alpha_k}\rangle_U\prod_{t=1}^{n-k}q(i_k, i_{n-t+1})e_{\alpha_n\cdots\widehat{\alpha}_k\ldots \alpha_1} 
     \end{split}
 \end{align}
where $i_k\in{N}$ is such that $e_{\alpha_k}\in \h_\C^{(i_k)}$ for $1\leq k\leq n$.

Taking cue from the approach of \cite{MS} and \cite{KSW}, we now establish the existence of dual/conjugate variables in the non-tracial context of mixed $q$-Araki-Woods von Neumann algebras.

\begin{prop}\label{prop: Algebraic formula for dual variable D_alpha}
	The algebraic formula for the tuple  $(D_1,\ldots,D_d)$ of the dual variables of $(A_1,\ldots, A_d)$ is given as follows; for fixed $\alpha\in [d]$ and $w=\alpha_n\cdots\alpha_1\in [d]^*$ such that $ e_\alpha\in \h_\C^{(i_0)}$ 
 and $e_{\alpha_k}\in \hc^{(i_k)}$, for some $i_0, i_k\in {N}$, $1\leq k\leq n$:
	\begin{equation}\label{eq:expression for D_alpha}
 D_\alpha\Omega=0,\;\;\;\;\;D_\alpha (e_{\alpha_n \cdots \alpha_1}) = \sum_{\pi \in B(n+1)} (-1)^{{\pi(0)-1}} Q(\pi) E(\pi) e_{s(\pi)}, 
 \end{equation}
	where $s(\pi)$  is the set of singletons of $\pi$,  and
 \begin{align*}
&Q(\pi):
=\prod_{0\leq v< u< \pi(0)}q(i_v, i_u)\cdot\prod_{\underset{\pi(v)>\pi(u)}{0< v<u< \pi(0)} } q(i_{\pi(v)}, i_{\pi(u)})
\cdot\prod_{\underset{v\in s(\pi), u\notin s(\pi)}{\pi(0)<v<u\leq n}}q(i_v, i_u)
\\  &E(\pi):= \prod_{\underset{{k>\ell}}{(k,\ell)}\in p(\pi)}\langle \bar{e}_{\alpha_k}, e_{\alpha_\ell}\rangle_U\\
&e_{s(\pi)}:=e_{\alpha_{\ell_s}\cdots \alpha_{\ell_1}}=e_{\alpha_{\ell_s}}\otimes\cdots\otimes e_{\alpha_{\ell_1}}\;\mbox{ for } s(\pi)=\{\ell_s>\ldots>\ell_1\}
 \end{align*}
 with the prescription that $\alpha_0=\alpha$.
\end{prop}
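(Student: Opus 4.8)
The plan is to verify that the operators $(D_1,\ldots,D_d)$ defined by the explicit formula \eqref{eq:expression for D_alpha} indeed satisfy the defining relations of a normalized dual system, namely $D_k\Omega=0$ and $[D_k,A_\ell]=\varphi(A_\ell A_k)P_{\C\Omega}$. The condition $D_k\Omega=0$ is built into the definition, so the content is the commutator relation. My approach would be to compute both $D_\alpha A_\beta(e_w)$ and $A_\beta D_\alpha(e_w)$ for an arbitrary simple tensor $e_w=e_{\alpha_n\cdots\alpha_1}$ using the explicit action \eqref{eq: formula for A_alpha} of $A_\beta$ and the formula \eqref{eq:expression for D_alpha}, and then check that their difference is $\langle\bar e_\ell,e_k\rangle_U$ times $\Omega$ when $w$ is empty and zero otherwise. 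Because $A_\beta$ both adds a letter at the front and deletes one letter (with a $q$-weight), while $D_\alpha$ ranges over the partitions $B(n+1)$, each of the two composites will be a sum over an enlarged family of partitions, and the combinatorial heart of the proof is a bijective matching of the terms of $D_\alpha A_\beta$ with those of $A_\beta D_\alpha$ so that all but the ``vacuum'' term cancel.

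First I would organize the computation of $A_\beta D_\alpha(e_w)$: apply $D_\alpha$ to get a sum over $\pi\in B(n+1)$ of terms $(-1)^{\pi(0)-1}Q(\pi)E(\pi)e_{s(\pi)}$, then apply $A_\beta$ to each $e_{s(\pi)}$, which by \eqref{eq: formula for A_alpha} produces a creation term $e_\beta e_{s(\pi)}$ and a sum of annihilation terms pairing $\beta$ with one of the surviving letters. Dually, I would compute $D_\alpha A_\beta(e_w)$: here $A_\beta(e_w)=e_\beta e_w + \sum_k \langle\bar e_\beta,e_{\alpha_k}\rangle_U(\cdots)e_{w\setminus\alpha_k}$, and applying $D_\alpha$ to $e_\beta e_w$ (a word of length $n+1$) gives a sum over $B(n+2)$, while applying $D_\alpha$ to each shortened word gives sums over $B(n)$. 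The key bookkeeping is to track where the new letter $e_\beta$ sits (it is at position $n+1$, the leftmost, i.e.\ the "highest index" vertex) and to see how partitions of $\{0,\ldots,n+1\}$ in which $\beta$ is a singleton, resp.\ in which $\beta$ is paired, resp.\ in which $\beta$ is paired with $0$, correspond to the terms coming from the $A_\beta D_\alpha$ side. The $q$-weight $Q(\pi)$ was designed precisely so that the extra crossings created by inserting $\beta$, or by deleting a letter, are accounted for; a lemma extracting how $Q$ transforms under "insert a leftmost vertex" and "delete a singleton/paired vertex" will make this tractable, very much in the spirit of the crossing-counting identity \eqref{eq:2.3} and the inductive argument in the proof of \eqref{eq:formula for inner product of simple tensors}.

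The main obstacle, as expected, is precisely this matching of $q$-weights: unlike the single-parameter $q$-Gaussian case of \cite{MS}, here each vertex $v$ carries its own index $i_v\in N$ and the weights are products $q(i_v,i_u)$ over crossing pairs, so one must check that the crossing contributions on the two sides agree \emph{with the correct indices attached}, not merely with the correct powers of a scalar $q$. Concretely, when $A_\beta$ annihilates $e_{\alpha_k}$ inside $e_w$ before $D_\alpha$ acts, versus when $D_\alpha$ produces $e_{s(\pi)}$ and then $A_\beta$ annihilates the image of $e_{\alpha_k}$, the intermediate $q$-factors $\prod q(i_k,i_t)$ from \eqref{eq: formula for A_alpha} must reassemble into the $Q$ of a single partition of the right size; this is where one uses that $q(i,j)=q(j,i)$ and the consistency $i_t=i_{\sigma(t)}$ forced by nonvanishing inner products. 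I would prove the needed weight-transformation identities as separate small lemmas, then run the term-by-term cancellation, checking at the end that the only uncancelled contribution is the one with $\pi(0)$ paired to the position of $e_\beta$ and all else singletons with an empty remaining word, which yields exactly $\langle\bar e_\alpha,e_\beta\rangle_U P_{\C\Omega}$. The base case $n=0$ (where $e_w=\Omega$) is a direct check: $D_\alpha A_\beta\Omega - A_\beta D_\alpha\Omega = D_\alpha e_\beta = \langle\bar e_\alpha,e_\beta\rangle_U\Omega$ by the only partition in $B(2)$, matching $\varphi(A_\beta A_\alpha)=\langle\bar e_\alpha,e_\beta\rangle_U$.
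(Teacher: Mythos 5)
Your proposal is correct and follows essentially the same route as the paper's proof: a direct verification of the commutator relation by computing $A_\beta D_\alpha(e_w)$ and $D_\alpha A_\beta(e_w)$, matching the annihilation-type terms with the partitions obtained by deleting a singleton, reassembling the remaining terms as partitions in $B(n+2)$ in which the new leftmost vertex is paired, and reconciling the crossing weights via the symmetry of $(q(i,j))$ together with the constraint $i_t=i_{\pi(t)}$ forced by $E(\pi)\neq 0$ — exactly the two weight-transformation identities the paper establishes inline. One small caution for the write-up: in this non-tracial setting the bilinear form is not symmetric, so the vacuum term is $\varphi(A_\beta A_\alpha)=\langle \bar{e}_\beta, e_\alpha\rangle_U$ (consistent with $E(\pi)$ for the unique pairing in $B(2)$), not $\langle \bar{e}_\alpha, e_\beta\rangle_U$ as written at the end of your plan.
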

\begin{rem}\label{rem: different formulas for Q(pi)}We have divided the terms of $Q(\pi)$ into three products. The first product (resp. the second product) corresponds to the crossings in the partition occurring on the line of the vertex $`u$' (resp. $`\pi(u)'$) for $0<u< \pi(0)$,  with the lines of another pair. 
While the third product corresponds to the crossings on the lines of the singletons. Note that $E(\pi)$ is non-zero  only if $i_k=i_{\pi(k)}$ for any pair $(k,\pi(k))\in p(\pi)$ (because $\xi\in \h_\C^{(i)}$ if and only if $\bar{\xi}\in \h_\C^{(i)}$ and $\hc^{(i)}\perp\hc^{(j)}$ if $i\neq j$). Thus in such cases, in the expression for $Q(\pi)$,  the term $q(i_v,i_u)$ can be replaced by $q(i_{\pi(v)}, i_{\pi(u)})$  for any indices $u,v$ which  are part of the pairings of $\pi$ and then we use symmetry of the matrix $(q(i,j))$ to get the following:
\begin{align}
  \begin{split} E(\pi) Q(\pi)&=E(\pi)\prod_{{0\leq v< u<\pi(0)}}q(i_{\pi(v)}, i_{\pi(u)})\cdot\prod_{\underset{\pi(v)>\pi(u)}{0< v<u< \pi(0)} } q(i_{\pi(v)}, i_{\pi(u)})
\cdot\prod_{\underset{v\in s(\pi), u\notin s(\pi)}{\pi(0)<v<u\leq n}}q(i_v, i_u)\\
&=E(\pi)\prod_{{0\leq v< u<\pi(0)}}q(i_{v},i_{u})\cdot\prod_{\underset{\pi(v)>\pi(u)}{0< v<u< \pi(0)} } q(i_{v}, i_{u})
\cdot\prod_{\underset{v\in s(\pi), u\notin s(\pi)}{\pi(0)<v<u\leq n}}q(i_v, i_u).
\end{split}
\end{align}
\end{rem}
\begin{proof}
Consider the (unbounded) operator $D_\alpha$  on $\FockTH$ defined as in \eqref{eq:expression for D_alpha} on vectors which  span   the subspace  $\FockTHalg$. We will show that $[D_\alpha, A_\beta]=\langle \bar{e}_\beta, e_\alpha\rangle_UP_{\C\Omega}$ on  $\FockTHalg$ for all $\beta\in[d]$. It is straightforward to	check that 
	\[ [D_\alpha , A_\beta] \Omega=D_\alpha A_\beta\Omega =D_\alpha  e_\beta = \langle \bar{e}_\beta, e_\alpha\rangle_U\Omega. \]
	Then for $\alpha_{n+1}\in [d]$, use \eqref{eq: formula for A_alpha} to compute 
	\begin{align*}	
 A_{\alpha_{n+1}} &D_\alpha (e_{\alpha_n  \ldots \alpha_1} )  =  \sum_{\sigma \in B(n+1)} (-1)^{\sigma(0)-1} Q(\sigma) E(\sigma) A_{\alpha_{n+1}}e_{s(\sigma)}\\ &=\sum_{\sigma \in B(n+1)} (-1)^{\sigma(0)-1} Q(\sigma) E(\sigma) \;{e}_{\alpha_{n+1}}e_{s(\sigma)}
		\\ &\quad+ \sum_{\sigma \in B(n+1)} \sum_{v=1}^{|s(\sigma)|} (-1)^{\sigma(0)-1} Q(\sigma) E(\sigma) \langle \bar{e}_{\alpha_{n+1}} ,{e}_{\alpha_{s(\sigma)_v}}\rangle_U \prod_{\underset{{t\in s(\sigma)}}{s(\sigma)_v<t\leq n}}q(i_{s(\sigma)_v}, i_t)\;    e_{s(\sigma)\setminus s(\sigma)_v}
	\end{align*}
 where $e_{s(\sigma)\setminus s(\sigma)_v}$ means that we omit the term $e_{\alpha_{s(\sigma)_v}}$ from $e_{s(\sigma)}$.
Next we compute 
 \begin{align*}
		D_\alpha  &A_{\alpha_{n+1}} (e_{\alpha_n  \ldots \alpha_1}) \\
  &= D_\alpha  \left(e_{\alpha_{n+1}\cdots\alpha_1}  + \sum_{u=1}^n \langle \bar{e}_{\alpha_{n+1}} , e_{\alpha_u}\rangle_U \prod_{u<t\leq n} q(i_{u},i_{t})  e_{\alpha_n\cdots \widehat{{\alpha}}_u \cdots \alpha_1}  \right)
		\\&= D_\alpha  e_{\alpha_{n+1}  \cdots \alpha_1}  \\& \quad\quad+ \sum_{u=1}^n      \sum_{\pi \in B(n)}  (-1)^{\pi(0)-1} Q(\pi,u)E(\pi, u) \langle \bar{e}_{\alpha_{n+1}} , e_{\alpha_u}\rangle_U\prod_{u<t\leq n} q(i_{u},i_t)  \;e_{s(\pi,u)}
	\end{align*}
where $e_{s(\pi, u)}$, $Q(\pi, u)$ and $E(\pi, u)$ respectively denote  $e_{s(\pi)}$, $Q(\pi)$  and  $E(\pi)$ only with the emphasis that  $\pi\in B(n)$ acts on the $n-1$ letters $\alpha_n,\ldots,\widehat{\alpha}_u, \ldots,\alpha_1$.	
	As in the proof of \cite[Proposition 4.5]{MS} we identify all terms in the last  sum of $A_{\alpha_{n+1}} D_\alpha (e_{\alpha_n \ldots \alpha_1} ) $ with  some terms  in the last sum of $D_\alpha  A_{\alpha_{n+1}} (e_{\alpha_n\cdots \alpha_1}) $ as follows: take a pair $(\sigma,v)$ with $\sigma\in B(n+1)$ and $v\in \{1,\ldots,|s(\sigma)|\}$, then  get a pair  $(\pi, u)$ where $u=s(\sigma)_v$ and  $\pi \in B(n)$ is a partition of the set $\{0,\ldots, n\}\setminus\{u\}$ obtained from $\sigma$ in an order preserving fashion  by removing the singleton $s(\sigma)_v$. Then observe  that $\sigma(0)=\pi(0)$ and  the pairings in $p(\sigma)$ and $p(\pi)$ remain the same while the singleton sets satisfy $s(\sigma)=s(\pi)\cup\{s(\sigma)_v\}$. This means that $e_{s(\pi, u)}=e_{s(\sigma)\setminus s(\sigma)_v}$ and $E(\pi, u)=E(\sigma)$ so that 
	\[  E(\pi,u) \langle \bar{e}_{\alpha_{n+1}} , e_{\alpha_u}\rangle_U e_{s(\pi, u)} = E(\sigma) \langle \bar{e}_{\alpha_{n+1}} , e_{\alpha_{s(\sigma)_v}}\rangle_U e_{s(\sigma)\setminus s(\sigma)_v}. \]
Moreover the extra terms which appear in the product of $Q(\sigma)$ as compared to that of $Q(\pi, u)$ are  the ones due to the crossing points on the line of the singleton $s(\sigma)_v$. Such crossings  occur with those pairings in $\sigma$ which have one of the vertices on the left of $u=s(\sigma)_v$. Therefore  we get
\begin{equation*}
    {Q(\sigma)}={Q(\pi,u)}\cdot\prod_{\underset{t\notin s(\sigma)}{s(\sigma)_v<t\leq n}}q(i_{s(\sigma)_v},i_t)
=Q(\pi, u)\cdot\frac{\prod_{{s(\sigma)_v<t\leq n}}q(i_{s(\sigma)_v}, i_{t})}{\prod_{\underset{t\in s(\sigma)}{s(\sigma)_v<t\leq n}}q(i_{s(\sigma)_v}, i_{t})}
\end{equation*}
where $\frac{0}{0}$ is  understood to be $0$; this implies by using $u=s(\sigma)_v$ that
\begin{align*}
Q(\sigma)\cdot\prod_{\underset{t\in s(\sigma)}{s(\sigma)_v<t\leq n}}q(i_{s(\sigma)_v}, i_{t})=Q(\pi,u)\cdot\prod_{{u< t\leq n}}q(i_u, i_{t}).
\end{align*}
This shows that the term corresponding to $\sigma$ and $v$ in the second sum of  $A_{\alpha_{n+1}} D_\alpha (e_{\alpha_n  \ldots \alpha_1} ) $  appears as the term corresponding to $\pi$ and $u$ in the second sum of 	$D_\alpha  A_{\alpha_{n+1}} (e_{\alpha_n \ldots \alpha_1})$, and at the same time note that this correspondence exhausts all the  pairs $(\pi, u)$ with $\pi\in B(n)$ and $u>\pi(0)$. Thus  after the subtracting we get the following:
	\begin{align}\label{eq:fomula after substractinf thw two calculation}
 \begin{split}
 - [D_\alpha , A_{\alpha_{n+1}}]&e_{\alpha_n \ldots \alpha_1} +
  D_\alpha  e_{\alpha_{n+1}  \cdots \alpha_1} \\&=\sum_{\sigma \in B(n+1)} (-1)^{\sigma(0)-1} Q(\sigma) E(\sigma) \;e_{\alpha_{n+1}}  e_{s(\sigma)}  \\
  &\quad-\sum_{u=1}^n      \sum_{\underset{\pi(0)\geq u}{\pi \in B(n)}}  (-1)^{\pi(0)-1} Q(\pi,u) E(\pi, u) \langle \bar{e}_{\alpha_{n+1}} , e_{\alpha_u}\rangle_U\prod_{u<t\leq n} q(i_{u},i_t) e_{s(\pi,u)}.
  \end{split}
  \end{align}
Now we try to understand the terms in the last sum above. Recall that each pair  $(\pi, u)$ with  $\pi\in B(n)$ and $u\in \{1,\ldots, n\}$ acts on  the $n-1$ letters $\alpha_n,\ldots, \widehat{\alpha}_u, \ldots, \alpha_1$. As in the second step of the proof of \cite[Proposition 4.5]{MS},   to each such pair $(\pi, u)$ with $u\leq \pi(0)$, we associate a new partition $\pi'\in B(n+2)$  as follows: 
construct the partition $\pi'\in B(n+2)$ of $\{1,\ldots, n+1\}$ out of $(\pi, u)$   by inserting a new point at $u$ and pairing it with $n+1$ with the requirement that  $\pi'\setminus(n+1, u)=\pi$. 
We then have
\begin{align*}
&\pi'(0)=\pi(0)+1.
\end{align*}
Observe that the set of singletons remains the same in both $\pi$ and $\pi'$  while the set of pairings satisfies $p(\pi')=p(\pi)\cup\{(n+1, u)\}$; hence we get 
\begin{equation*}
    e_{s(\pi')}=e_{s(\pi, u)}\;\;\mbox{ and }\;\;E(\pi')=E(\pi,u)\;\langle \bar{e}_{\alpha_{n+1}}, e_{\alpha_u}\rangle_U.
\end{equation*} 
Further note that  there are some extra crossing points for the partition $\pi'$ as compared to $(\pi, u)$  due to the coupling $(n+1, u)$; such crossings occur  on the line of $t$ of the couplings $(t, \pi'(t))$ for $0\leq t<u$, on  the lines of both $t$ and $\pi(t)$ of the couplings $(t,\pi'(t))$  for $\pi'(0)> t>u$,  and  with the singletons of $\pi'$. Therefore  we obtain (whenever $E(\pi')\neq 0$) that
\begin{align*}
    \nonumber Q(\pi')&={Q(\pi, u)}\cdot\prod_{0\leq t<u}q(i_t, i_u)\cdot\prod_{{u<t<\pi'(0)}}q(i_t, i_u)\cdot\prod_{u<t< \pi'(0)}q(i_{\pi'(u)}, i_{\pi'(t)})\cdot\prod_{t\in s(\pi')}q(i_t, i_{n+1})\\
    &={Q(\pi, u)}\prod_{0\leq t<u}q(i_{\pi'(t)}, i_u)\cdot\prod_{{u<t<\pi'(0)}}q(i_{\pi'(t)}, i_u)\cdot\prod_{u<t< \pi'(0)}q(i_{u}, i_{\pi'(t)})\cdot\prod_{t\in s(\pi')}q(i_t, i_u)
\end{align*}
 where to get the last expression, we have used the symmetry of the matrix $(q(i,j))$ and the same argument as in Remark \ref{rem: different formulas for Q(pi)} that whenever $E(\pi')\neq0$, then $i_t=i_{\pi'(t)}$ for any coupling $(t, \pi'(t))$  so that we can replace any $q(i_t, i_r)$ by $q(i_{\pi(t)}, i_r)$, $r\in{N}$.  On the other hand, if $E(\pi')=0$ then $E(\pi, u)=0$, so the corresponding term does not contribute in the sum. A careful observation of the last product above (and again the symmetry of the matrix $(q(i,j))$) then yields
\begin{equation}\label{eq:expression for Q(pi')}
    Q(\pi')=Q(\pi, u)\cdot \prod_{u<t\leq n}q(i_u, i_t).
\end{equation}
Therefore the expression in \eqref{eq:fomula after substractinf thw two calculation} can finally be rewritten in the following way:  
	\begin{align*}
	    - [D_\alpha , A_{\alpha_{n+1}}]&(e_{\alpha_n  \ldots \alpha_1} )+
  D_\alpha  e_{\alpha_{n+1}}  \ldots e_{\alpha_1} \\&=\sum_{\sigma \in B(n+1)} (-1)^{\sigma(0)-1} Q(\sigma) E(\sigma) \;e_{\alpha_{n+1}}e_{s(\sigma)}  \\
  &\;\;\;\;\;\;\;\;\;\;+    \sum_{\underset{\pi'(n+1)\mbox{ is not a singleton}}{\pi' \in B(n+2)}}  (-1)^{\pi'(0)-1} Q(\pi') E(\pi') \;e_{s(\pi')}\\
  &=\sum_{{\pi' \in B(n+2)}}  (-1)^{\pi'(0)-1} Q(\pi') E(\pi') \;e_{s(\pi')}
	\end{align*}
 where the last equality follows because each partition $\sigma\in B(n+1)$ gets identified exactly to a partition $\sigma'\in B(n+2)$ in an order preserving fashion such that $n+1$ is a singleton in $\sigma'$; in such  case $\sigma(0)=\sigma'(0), Q(\sigma)=Q(\sigma'), E(\sigma)=E(\sigma')$ and $s(\sigma')=\{n+1\}\cup s(\sigma)$ so that $e_{s(\sigma')}=e_{\alpha_{n+1}} e_{s(\pi)}$. We have thus shown that 
	 $[D_\alpha , A_{\alpha_{n+1}}]e_{\alpha_{n}\cdots \alpha_1} =0$. This completes the proof.
\end{proof}

We now give the main result of this  Section in the following theorem. Note that since $\pi(n+1)$ has at least one singleton whenever $n$ is even, $\langle\Omega, D_\alpha e_w\rangle=0$ for any word $w$ of even length and $\alpha\in [d]$. Also if $n=2m-1$ for $m\geq 1$, then in the summation of $D_\alpha$, only those $\pi\in B(2m)$ contribute a non-zero term in the inner product $\langle\Omega, D_\alpha e_w\rangle$ which has no singleton; this occurs only if $\pi(0)=m$. In this case, we have
\begin{align}\label{eq:inner product of D_alpha}
    \langle\Omega, D_\alpha e_\omega\rangle=\sum_{\underset{\pi(0)=m}{\pi\in B(2m)}}(-1)^{m-1}Q(\pi, \omega)E(\pi, \omega)
\end{align}
where $Q(\pi, \omega)$ and $E(\pi, \omega)$ are written respectively for $Q(\pi)$ and $E(\pi)$ in order to make the dependency on $\omega$ explicit.

\begin{tw} \label{thm:existence of conjugate variables}
	For each $\alpha\in[d]$ we have $\Omega \in \dom D_\alpha^*$. Thus $(D_{1}^{\ast}\Omega,\dots, D_{d}^{\ast}\Omega)$ forms a set of conjugate variables for $(A_{1},\dots, A_{d})$.
\end{tw}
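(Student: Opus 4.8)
The plan is to show that the linear functional $e_w \mapsto \langle \Omega, D_\alpha e_w \rangle$, defined on $\FockTHalg$, is bounded for the $\langle\cdot,\cdot\rangle_T$-norm, and hence extends to a bounded functional on $\FockTH$; this is exactly the statement that $\Omega \in \dom D_\alpha^*$, with $D_\alpha^*\Omega$ being the Riesz representative. The last statement then follows immediately from the Proposition of Section \ref{sec:dual variables} (dual variables yield conjugate variables), since we have already verified in Proposition \ref{prop: Algebraic formula for dual variable D_alpha} that $(D_1,\ldots,D_d)$ is a normalized dual system.

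\medskip\noindent\textbf{Reduction to a norm estimate.} By \eqref{eq:inner product of D_alpha}, only words $w$ of odd length $n=2m-1$ contribute, and $\langle \Omega, D_\alpha e_w\rangle = \sum_{\pi\in B(2m),\,\pi(0)=m} (-1)^{m-1} Q(\pi,w) E(\pi,w)$. I would first recognize this sum as (up to sign) a weighted sum over pair partitions of the $2m$ points $\{0,1,\ldots,2m-1\}$ of products of twisted inner products $\langle \bar e_{\alpha_k}, e_{\alpha_\ell}\rangle_U$, with weights $Q(\pi,w)$ that are products of at most $\binom{2m}{2}$ factors each bounded in modulus by $q=\max_{i,j}|q_{ij}|<1$. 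The key structural point — to be extracted from the combinatorial rules (1)--(4) defining $B(n+1)$ — is that the partitions $\pi$ with $\pi(0)=m$ are precisely the pair partitions in which $0$ is paired with $m$ and each left point $\ell<m$ is paired with a right point $>m$; i.e. these are the \emph{non-crossing-compatible} interval structures on a $2m$-chain, so the number of crossings in such $\pi$ grows, allowing a geometric gain. Following the strategy of \cite{MS} and \cite{KSW}, I would bound $\sum_w |\langle \Omega, D_\alpha e_w\rangle|^2$ against a series in $m$ with coefficients controlled by $q^{c m^2}$ (or at least $q$ raised to a power growing fast enough), times the combinatorial count of the relevant $\pi$'s, times $\|e_\alpha\|_U^2$-type constants; the deformed norm comparison on $\FockTH$ (the operator $P^{(n)}$ is bounded below, Lemma \ref{lem:free annihilation operators are bounded} and the surrounding estimates) lets one pass between the $\langle\cdot,\cdot\rangle_U^{\otimes n}$-norm and the $\langle\cdot,\cdot\rangle_T$-norm at the cost of the universal constant $w(q)^{-1}$.

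\medskip\noindent\textbf{Carrying out the estimate.} Concretely I would proceed as in the tracial proofs: write $D_\alpha^*\Omega$ formally as $\sum_{m\geq 1} \xi_\alpha^{(2m-1)}$ with $\xi_\alpha^{(2m-1)} \in \hu^{\otimes(2m-1)}$ the vector whose coefficient against $e_w$ in the $\langle\cdot,\cdot\rangle_U$-pairing is $\langle \Omega, D_\alpha e_w\rangle$, then estimate $\|\xi_\alpha^{(2m-1)}\|_U$ by grouping the sum over $w$ and $\pi$, summing the inner products over the free indices, and using that each $Q(\pi,w)$ contributes a factor $|q|^{(\text{number of crossings of }\pi)}$ while $|s(\pi)|=0$ forces many crossings. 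Because the matrix $(q_{ij})$ is only bounded by $q$ and not constant, I cannot use generating-function identities in a single variable; instead I replace every $q_{ij}$ by its modulus bound $q$ in absolute-value estimates, which only loses constants. One then gets $\|\xi_\alpha^{(2m-1)}\|_U \leq C\, d^{m} q^{\,\gamma m}$ or similar with $\gamma>0$ large enough (after absorbing the $d^m$ count of words and the $C_q$-type combinatorial constants) to make $\sum_m \|\xi_\alpha^{(2m-1)}\|_U < \infty$; summing in the $T$-norm introduces only the extra bounded factor from $\|P^{(n)}\|$. This shows $D_\alpha^*\Omega$ is a well-defined element of $L^2(\M_T)$ and hence $\Omega\in\dom D_\alpha^*$.

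\medskip\noindent\textbf{Main obstacle.} The delicate point is \emph{not} the existence of a geometric gain but verifying that it beats the combinatorial explosion in the non-tracial, mixed setting: in \cite{MS} the vacuum expectations $\langle\bar e_{\alpha_k}, e_{\alpha_\ell}\rangle_U$ are replaced by $\delta$-functions and the count reduces to a clean $q$-series, whereas here the twisted inner product $\langle\cdot,\cdot\rangle_U = \langle\cdot, 2(1+A^{-1})^{-1}\cdot\rangle$ is not a projection — a vector $e_\ell$ can pair nontrivially with several basis vectors and with $\bar e_\ell \neq e_\ell$. Controlling the resulting bilinear form requires bounding $\|2(1+A^{-1})^{-1}\|$ on the finite-dimensional $\hu$ (which is finite since $A$ is non-singular with finite spectrum in this case) and absorbing it into the constant, and then checking that the crossing count in $\{\pi\in B(2m):\pi(0)=m, s(\pi)=\emptyset\}$ still grows quadratically in $m$ so that $q^{\gamma m}$ (with $\gamma$ large) dominates the $O(d^{2m}(2m)!)$-type combinatorial factor — equivalently, that the analogue of the Bożejko--Speicher positivity/boundedness estimates survives the presence of distinct $q_{ij}$. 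Given Lemma \ref{lem:free annihilation operators are bounded} and the boundedness of $T$ with $\|T\|<1$, this is exactly the kind of estimate that goes through, but it is where the real work lies.
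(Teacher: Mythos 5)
Your strategy is sound and the estimate you envision does close, but you take a genuinely different route from the paper. You propose a brute-force coefficient bound: for a word $w$ of length $2m-1$, every contributing partition $\pi$ (those with $\pi(0)=m$ and no singletons) satisfies $|Q(\pi,w)|\leq q^{m(m-1)/2}$ (the first product in $Q(\pi)$ alone has $\binom{m}{2}$ factors), there are only $(m-1)!$ such partitions, and $|E(\pi,w)|\leq B^{2m}$; since the $e_w$ are orthonormal in the \emph{undeformed} tensor inner product, this bounds the $\hu^{\otimes(2m-1)}$-norm of the coefficient vector by $d^{m}(m-1)!\,q^{m(m-1)/2}B^{2m}$, and the uniform lower bound on $P^{(n)}$ (the direction you correctly invoke) converts this into a $T$-norm bound on the Riesz representative, summable in $m$ because $q^{cm^2}$ beats the factorial. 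Two cautions: the exponent must genuinely be quadratic in $m$ — a bound of the form $q^{\gamma m}$ with fixed $\gamma$, which you mention as an alternative, would \emph{not} defeat $(m-1)!\,d^{2m}$ — and your "main obstacle" about $\langle\cdot,\cdot\rangle_U$ not being a projection dissolves once you simply bound each pairing by $B^2$ and accept the $d^{2m}$ word count. The paper instead resums over the permutations $\rho\in S_{m-1}$ hidden in the $\pi$'s and recognizes $\sum_\rho\prod q^{\mathrm{inv}}\prod\langle\bar e_{w_{\rho(t)}},e_{w'_t}\rangle_U$ as the deformed inner product $\langle\bar e_w,e_{w'}\rangle_T$ itself; this produces a closed-form Riesz representative $\xi_{m,\alpha}=\sum_{\beta}\sum_{|w|=m-1}(\prod q)\langle e_\alpha,\bar e_\beta\rangle_U L^*_{w\beta}(\bar e_w)$ built from adjoints of products of free annihilation operators, with the norm estimate reduced to Lemma \ref{lem:free annihilation operators are bounded}. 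What the paper's route buys, and yours does not, is precisely this explicit formula, which is then indispensable for Corollary \ref{cor:conjugates variables are in M Omega} (the conjugate variables lie in $\M_T\Omega$) and Corollary \ref{cor: Lipstisz conjugate} (the Lipschitz property); your softer boundedness argument proves the theorem as stated but would leave those later results without their starting point.
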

\begin{proof}
Fix $\alpha\in [d]$ and let $e_\alpha\in \h_\C^{(i_\alpha)}$ for some $i_\alpha\in {N}$.	As in  \cite[Theorem 4.6]{MS} and \cite[Proposition 1.4]{KSW} we will prove that the functional $\langle \Omega, D_\alpha(\cdot)\rangle_T$ on $\FockTHalg$ extends to a bounded  map on $\FockTH$. Fix then an element $\sum_{w\in [d]^*}\lambda_we_w$ in $\FockTHalg$ where the scalars $\lambda_w\in \C$ are  $0$ except for finitely many, and then calculate using \eqref{eq:inner product of D_alpha}:
\begin{align}\label{eq:expression for the inner product of omegea, D_alpha omega}
     \langle \Omega, D_\alpha (\sum_{w \in [d]^*} \lambda_w e_w)\rangle_T=\sum_{m=1}^\infty \sum_{\underset{\pi(0)=m}{\pi \in B(2m)}\;\;} \sum_{|w|=2m-1} \lambda_w (-1)^{m-1} Q(\pi, w)E(\pi, w). 
     \end{align}
Fix  $m\geqslant 1$ and  rewrite each word $w$ of length $2m-1$ as 
	$w_1\beta w_1'$ with $w_1,w_1'$ being words of length $m-1$ and $\beta  \in [d]$; then identify each $\pi \in B(2m)$, $\pi(0)=m$ with a permutation $\rho:=\rho_\pi\in S_{m-1}$ requiring that the couplings satisfy $(t, m+\rho(t))\in p(\pi)$ for $1\leq t\leq m-1$. Under such identification  write $Q(\rho, w_1, w_1',\beta)$ and $E(\rho, w_1,w_1',\beta)$ respectively for $Q(\pi, w)$ and $E(\pi, w)$; which can further be rewritten   as follows: 
 if  $w_1=\alpha_{m-1}\cdots\alpha_1$,  $w_1'=\alpha_{m-1}'\cdots\alpha_1' $ and  $e_{\alpha_{t}}\in \hc^{(i_t)}$, $e_{\alpha'_t}=\h_\C^{(i_t')}$, $e_\beta\in \hc^{(j)}$ for some $j,i_t, i'_t\in {N}$,  $1\leq t\leq m-1$, then 
 \begin{align*}
 &E(\rho, w_1, w_1', \beta) = \langle \bar{e}_\beta, e_\alpha\rangle_U\prod_{t=1}^{m-1} \langle \bar{e}_{\alpha_{\rho(t)}}, e_{\alpha'_t}\rangle_U=\langle \bar{e}_\beta, e_\alpha\rangle_U\prod_{t=1}^{m-1}\langle \bar{e}_{\alpha_t}, e_{\alpha'_{\rho^{-1}(t)}}\rangle_U\\
&Q(\rho, w_1,w_1',\beta)
=\prod_{u=1}^{m-1}q(i_\alpha,i_{u}')\cdot\prod_{{1\leq v< u\leq m-1}}q(i_{v}', i_{u}')\cdot\prod_{\underset{\rho(v)>\rho(u)}{1\leq v<u\leq m-1}}  q(i_{\rho(v)}, i_{\rho(u)}).
\end{align*}
Moreover, whenever $E(\pi, w)\neq 0$, then we use the same arguments as in Remark \ref{rem: different formulas for Q(pi)} to get the following expression:
\begin{equation*}
   Q(\rho, w_1, w_1',\beta)= \prod_{t=1}^{m-1}q(i_\alpha,i_{\rho(t)})\cdot\prod_{{1\leq v< u\leq m-1}}q(i_{\rho(v)}, i_{\rho(u)})\cdot\prod_{\underset{\rho(v)>\rho(u)}{1\leq v<u\leq m-1}}  q(i_{\rho(v)}, i_{\rho(u)})
\end{equation*}
which can further be written  as follows by rearranging the indices of the first two products  and then by using the symmetry of the matrix $(q(i,j))$:
\begin{equation*}
    Q(\rho, w_1, w_1',\beta)=\prod_{t=1}^{m-1}q(i_\alpha,i_{t}) \prod_{{1\leq v< u\leq m-1}}q(i_{v}, i_{u})\cdot\prod_{\underset{\rho(v)>\rho(u)}{1\leq v<u\leq m-1}}  q(i_{\rho(v)}, i_{\rho(u)}).
\end{equation*}
Therefore the sum in \eqref{eq:expression for the inner product of omegea, D_alpha omega} can be rewritten as follows:
	\begin{align*} 
	 \langle \Omega, D_\alpha& (\sum_{w \in [d]^*} \lambda_w e_w)\rangle_T	
		\\&= \sum_{m=1}^\infty(-1)^{m-1}  \sum_{\beta \in [d]} \sum_{\rho \in S_{m-1}\;\;} \sum_{|w|=m-1} \sum_{|w'|=m-1} 
		\lambda_{w\beta w'} \; Q(\rho, w, w',\beta)\;  E(\rho, w, w',\beta) \end{align*}
 We set the following notations: For each word $w=\alpha_n\cdots\alpha_1$ with $e_{\alpha_t}=\hc^{(i_t)}$ for $i_t\in{N}, 1\leq t\leq n$, write $i_t=i_t^{(w)}$ in order to show dependency on $w$. Also write $\bar{e}_w=\bar{e}_{\alpha_n}\otimes\cdots\otimes\bar{e}_{\alpha_1}$ and $\rho(w)=\alpha_{\rho(n)}\cdots\alpha_{\rho(1)}$ for any permutation $\rho\in S_{n}$. In this notation, we note that $E(\rho, w_1, w_1',\beta)=\langle\bar{e}_\beta,e_\alpha\rangle_U\langle e_{\rho(w)}, e_{w'}\rangle_{\hu^{\otimes^n}}$. We now calculate
	\begin{align*} 
		& \sum_{\beta \in [d]} \sum_{\rho \in S_{m-1}} \sum_{|w|=m-1} \sum_{|w'|=m-1} 
		\lambda_{w\beta w'} \; Q(\rho, w,w',\beta) \; E(\rho, w, w',\beta)
		\\&=\sum_{\beta \in [d]}  
		\sum_{|w|=m-1} \sum_{|w'|=m-1}\prod_{{1\leq v<u\leq m-1}}q(i^{(w)}_{v}, i^{(w)}_u)\prod_{t=1}^{m-1}q(i_\alpha,i_{t}^{(w)})\lambda_{w\beta w'}\langle \bar{e}_\beta, e_\alpha\rangle_U\\
  &\quad\quad\quad\cdot\sum_{\rho \in S_{m-1}} \prod_{\underset{\rho(t)>\rho(r)}{1\leq t<r\leq m-1}}  q(i^{(w)}_{\rho(r)}, i^{(w)}_{\rho(t)})\prod_{t=1}^{m-1}\langle \bar{e}_{w_{\rho(t)}}, e_{w'_t}\rangle_U
		\\&= \sum_{\beta \in [d]}  
		\sum_{|w|=m-1} \sum_{|w'|=m-1}\prod_{{1\leq v< u\leq m-1}}q(i^{(w)}_{v}, i^{(w)}_u)\prod_{t=1}^{m-1}q(i_\alpha,i_{t}^{(w)})\lambda_{w\beta w'}\langle \bar{e}_\beta, e_\alpha \rangle_U\langle \bar{e}_w,e_{w'}\rangle_T\\
  &= \sum_{\beta \in [d]}\sum_{|w|=m-1}\prod_{{1\leq v< u\leq m-1}}q(i^{(w)}_{v}, i^{(w)}_u) \prod_{t=1}^{m-1}q(i_\alpha,i_{t}^{(w)})\langle \bar{e}_\beta, e_\alpha \rangle_U {\left\langle \bar{e}_w,\sum_{|w'|=m-1}\lambda_{w\beta w'}\;e_{w'}\right\rangle}_T.
	\end{align*}
 Now denote  by ${L}_{w\beta}$ the composition of the $m$ relevant  free annihilation operators  i.e. $L_{w\beta}=l(e_{\beta})l(e_{\alpha_{1}})\cdots l(e_{\alpha_n})$ for any word $w=\alpha_n\cdots\alpha_1$, where $l(e_\gamma)$ is as defined in \eqref{eq:free annihilation operator} for any $\gamma\in [d]$. 
 Recall our assumption that the set $\{e_\gamma\}_{\gamma\in[d]}$ is an orthonormal basis for $\h_U$ so that $L_{w\beta}(e_{w''\gamma w'})=0$ for  words $w', w'' $ of length $m-1$ and $\gamma\in [d]$ such that $w''\gamma\neq  w\beta$. Then for fixed $w, \beta,\alpha$ as above we obtain
 \begin{align*}
     \sum_{|w'|=m-1}\lambda_{w\beta w'}
     \;e_{w'}&= 
     \sum_{\gamma\in [d]}\sum_{|w''|=m-1}\sum_{|w'|=m-1}\lambda_{w''\gamma w'}\;{L}_{w\beta}(e_{w''\gamma w'})\\
     &= {L}_{w\beta}\left(\sum_{\gamma\in [d]}\sum_{|w''|=m-1} \sum_{|w'|=m-1}\lambda_{w''\gamma w'}e_{w''\gamma w'}\right)\\
     &={L}_{w\beta} \left(\sum_{|w''|=2m-1}\lambda_{w''}e_{w''}\right)
 \end{align*}
 so that 
 \begin{align*}
    \langle \bar{e}_w,  \sum_{|w'|=m-1}\lambda_{w\beta w'}
     \;e_{w'}\rangle_T=\langle \bar{e}_w,   {L}_{w\beta} \left(\sum_{w''\in [d]^*}\lambda_{w''}e_{w''}\right)\rangle_T.
 \end{align*} 
which follows by noting that the operator ${L}_{w\beta}$ maps ${e}_{w'}$ with $|w'|=2m'-1$ to the subspace spanned by $\{e_v\}_{|v|=m-1}$ and hence $\langle \Bar{e}_w, {L}_{w\beta}(e_{w'})\rangle_T=0$ if $m\neq m'$ and $|w|=m-1, |w'|=2m'-1$. Thus we get
\begin{align*}
    &\sum_{\beta \in [d]}\sum_{\rho \in S_{m-1}} \sum_{|w|=m-1}  \sum_{|w'|=m-1} 
		\lambda_{w\beta w'} \; Q(\rho, w, w',\beta) \; E(\rho, w, w',\beta)\\
  &=\sum_{\beta\in [d]}\sum_{|w|=m-1}\prod_{{1\leq v< u\leq m-1}}q(i^{(w)}_{v}, i^{(w)}_u)\prod_{t=1}^{m-1}q(i_\alpha,i_{t}^{(w)})\langle\Bar{e}_\beta, e_\alpha\rangle_U\left\langle \bar{e}_w, {L}_{w\beta}\left(\sum_{w''\in[d]^*}\lambda_{w''}e_{w''}\right)\right\rangle_T
\end{align*}
 If we set 
\begin{align}\label{eq:expression for xi_malpha}\xi_{m,\alpha}=\sum_{\beta\in [d]}\sum_{|w|=m-1}\prod_{{1\leq v<u\leq m-1}}q(i^{(w)}_{v}, i^{(w)}_u)\prod_{t=1}^{m-1}q(i_\alpha,i_{t}^{(w)}) \langle e_\alpha,\Bar{e}_\beta\rangle_U  {L}_{w\beta}^*(\bar{e}_w)\end{align}
 then we have
\begin{align*}
    \langle \Omega, D_\alpha (\sum_{w \in [d]^*} \lambda_w e_w)\rangle_T=\langle\sum_{m=1}^\infty(-1)^m\xi_{m,\alpha}, \sum_{w\in[d]^*}\lambda_{w}e_{w}\rangle_T
\end{align*}
so long as the sum $\sum_{m=1}^\infty(-1)^m\xi_{m,\alpha}$ is a well-defined vector. We show that it indeed is and converges in norm:
\begin{align}\label{eq:estimate for xi_m,alpha}
\begin{split}
    \|\xi_{m,\alpha}\|_T&\leq \sum_{\beta\in [d]}\sum_{|w|=m-1}q^{\frac{m(m-1)}{2}}\|{e}_\alpha\|_U \;\|\bar{e}_\beta\|_U\;  \|{L}_{w\beta}^*\|\;\|\bar{e}_w\|_T\\
    &\leq d\; B\sum_{|w|=m-1}q^{\frac{m(m-1)}{2}}C^m\|\bar{e}_w\|_T\\
    &=d^{m}B^mC^m\sqrt{[m-1]_q!} \;q^{\frac{m(m-1)}{2}}
    \end{split}
\end{align}
where $d=|[d]|$, $B=\max_{\gamma\in [d]}\|\Bar{e}_\gamma\|_U$, $C=\max_{\gamma\in [d]}\|l(e_\gamma)\|<\infty$ (by Lemma \ref{lem:free annihilation operators are bounded}) and $q=\max_{i,j}|q(i,j)|<1$; and the last equality follows by noting from Lemma \ref{eq:formula for inner product of simple tensors} that 
  \begin{equation}\label{eq:estimate of T-norm of e_w}\|\Bar{{e}}_w\|_T^2\leq \sum_{\sigma\in S_{m-1}}q^{\inv(\sigma)}B^{2(m-1)}=[m-1]_q!B^{2(m-1)}\end{equation} where $\inv(\sigma)$ denotes the number of inversions in the permutation $\sigma$, so  that 
  \[\sum_{|w|=m-1}\|\Bar{e}_w\|_T\leq d^{m-1}B^{m-1}\sqrt{[m-1]_q!}.\]  Therefore we finally arrive at the claim:
\begin{align*}
    \sum_{m=1}^\infty\|\xi_{m,\alpha}\|\leq\sum_{m=1}^\infty q^{\frac{m(m-1)}{2}}d^m B^mC^m\sqrt{[m-1]_q!}<\;\infty
\end{align*}
by using ratio test. We have thus shown that
\begin{equation}\label{eq:expression for xi_alpha}
\xi_\alpha:=\sum_{m=1}^\infty(-1)^m\xi_{m,\alpha}\in \FockTH
\end{equation}
and $\langle\Omega, D_\alpha(\cdot)\rangle_T=\langle\xi_\alpha, \cdot\rangle_T$ on $\FockTHalg$; hence the functional extends to a bounded linear map on $\FockTH$. This proves that $\Omega\in \dom D_\alpha^*$ and $\xi_\alpha=D_\alpha^*(\Omega)$.
\end{proof}

In our next result, we  use  Bo\.zejko-Haagerup type inequality (see \cite[Lemma 2]{Nou1}): 
    For any $\xi\in \hu^{\otimes n}, n\geq 1$, we have
    \begin{align}\label{eq:Bozejko inequality}
        \|\xi\|_T\leq \|W(\xi)\|\leq C_q^{3/2}(n+1)\|\xi\|_T
    \end{align}
    where as before $q=\sup_{i,j}|q{(i,j)}|$ and $C_q=\prod_{n=1}^\infty(1-q^n)^{-1}$.
With the notations as in the proof of Theorem \ref{thm:existence of conjugate variables}, we establish the following:

\begin{cor}\label{cor:conjugates variables are in M Omega}
The conjugate system $(\xi_1,\ldots,\xi_d)$ corresponding to $(D_1, \ldots, D_{d})$ is given by $\xi_{\alpha}=D_{\alpha}^*\Omega=\sum_{m=1}^\infty(-1)^{m-1}\xi_{m,{\alpha}}$ for $1\leq \alpha\leq d:=\dim{\hr}$ where for $ m\geq 1$,
\begin{equation*}
    \xi_{m, \alpha}=\sum_{\beta\in [d]}\sum_{|w|=m-1}\prod_{{1\leq v< u\leq m-1}}q(i^{(w)}_{v}, i^{(w)}_u)\prod_{t=1}^{m-1}q(i_\alpha, i^{(w)}_t)\langle e_\alpha, \Bar{e}_{\beta}\rangle_U  {L}_{w\beta}^*(\bar{e}_w).
\end{equation*}
Moreover the sum $\sum_{m=1}^\infty(-1)^{m-1}W(\xi_{m, \alpha})$ converges in operator norm i.e. $\xi_{\alpha}\in \M_T\Omega$.
\end{cor}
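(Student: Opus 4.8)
The first part of the corollary --- the explicit shape of the vectors $\xi_{m,\alpha}$ together with the identity $\xi_{\alpha}=D_{\alpha}^{*}\Omega=\sum_{m\geq1}(-1)^{m-1}\xi_{m,\alpha}$ --- is, up to the choice of overall sign, exactly what was proved in Theorem~\ref{thm:existence of conjugate variables}, so the only genuinely new assertion is the convergence of $\sum_{m\geq1}(-1)^{m-1}W(\xi_{m,\alpha})$ in operator norm. The plan is to upgrade the $L^{2}$-estimate \eqref{eq:estimate for xi_m,alpha} obtained in the proof of that theorem to an operator-norm estimate by feeding it into the Bo\.zejko--Haagerup inequality \eqref{eq:Bozejko inequality}, and then to invoke norm-completeness of $\M_T$.

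The one point that must be observed first is the particle degree of $\xi_{m,\alpha}$. For a word $w$ of length $m-1$ and $\beta\in[d]$, the operator $L_{w\beta}$ is a product of $m$ free annihilation operators, so its adjoint $L_{w\beta}^{*}$ raises the number of particles by $m$; applied to $\bar e_{w}\in\hu^{\otimes(m-1)}$ it lands in $\hu^{\otimes(2m-1)}$, and therefore $\xi_{m,\alpha}$ is a finite vector in the $(2m-1)$-particle space $\hu^{\otimes(2m-1)}$. In particular $W(\xi_{m,\alpha})\in\M_T$ is well defined and \eqref{eq:Bozejko inequality} applies with $n=2m-1$, giving $\|W(\xi_{m,\alpha})\|\leq C_q^{3/2}\,(2m)\,\|\xi_{m,\alpha}\|_T$. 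Combining this with \eqref{eq:estimate for xi_m,alpha} and the crude bound $[m-1]_q!\leq(1-q)^{-(m-1)}$ yields an estimate of the form $\|W(\xi_{m,\alpha})\|\leq \mathrm{const}\cdot m\,\kappa^{m}\,q^{m(m-1)/2}$ for some fixed $\kappa>0$. Since the super-exponential factor $q^{m(m-1)/2}$ dominates both the linear factor $m$ and the exponential $\kappa^{m}$, the ratio test gives $\sum_{m\geq1}\|W(\xi_{m,\alpha})\|<\infty$.

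It follows that the partial sums of $\sum_{m\geq1}(-1)^{m-1}W(\xi_{m,\alpha})$ are Cauchy in operator norm and hence converge to some $X_{\alpha}\in\M_T$, using that $\M_T$ is norm-closed. Finally, evaluation at $\Omega$ is norm-continuous, since $\|W(\xi)\Omega\|_T=\|\xi\|_T\leq\|W(\xi)\|$ by the left-hand inequality in \eqref{eq:Bozejko inequality}; passing to the limit term by term gives $X_{\alpha}\Omega=\sum_{m\geq1}(-1)^{m-1}\xi_{m,\alpha}=\xi_{\alpha}$, that is, $\xi_{\alpha}\in\M_T\Omega$. I do not anticipate a real obstacle: all the analytic content was already packaged into Theorem~\ref{thm:existence of conjugate variables}, and the only thing to add is the remark that the factor $n+1=2m$ coming from Bo\.zejko--Haagerup is merely polynomial and is therefore absorbed by the Gaussian-type decay $q^{m(m-1)/2}$; the sole bookkeeping subtlety is to identify correctly that $\xi_{m,\alpha}$ sits in the $(2m-1)$-particle space rather than, say, the $(m-1)$-particle one.
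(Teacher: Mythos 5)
Your argument is correct and follows essentially the same route as the paper's proof: identify that $\xi_{m,\alpha}$ lives in the $(2m-1)$-particle space, apply the Bo\.zejko--Haagerup inequality \eqref{eq:Bozejko inequality} to pick up only a linear factor $2m$, combine with the estimates behind \eqref{eq:estimate for xi_m,alpha}, and conclude by the ratio test thanks to the dominating factor $q^{m(m-1)/2}$. The only (immaterial) difference is that the paper applies \eqref{eq:Bozejko inequality} term by term to each $W(L_{w\beta}^*\bar e_w)$ before summing over $w$ and $\beta$, whereas you apply it once to $\xi_{m,\alpha}$ and reuse the already-established $L^2$-bound; your closing remark on the sign discrepancy with \eqref{eq:expression for xi_alpha} is also apt, as that is an inconsistency in the paper's own statement rather than in your argument.
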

\begin{proof}
    We have already seen the expression for the conjugate variables $\xi_{\alpha}$ in \eqref{eq:expression for xi_alpha} in Theorem \ref{thm:existence of conjugate variables}.
Note that the vector $L_{w\beta}^*(\bar{e}_w)$ belongs to the particle space $\hu^{\otimes{2m-1}}$ for any word $w$ of length $m-1$ and $\beta\in [d]$. Arguing as in \cite{MS},  we invoke  Bo\.zejko's Haagerup type inequality as mentioned in \eqref{eq:Bozejko inequality} to infer that
  \begin{align*}
      \|W(\xi_{m,\alpha})\|&\leq \sum_{\beta\in [d]}\sum_{|w|=m-1}q^{\frac{m(m-1)}{2}}\|{e}_{\alpha}\|_U\; \|\bar{e}_\beta\|_U\;  \|W({L}_{w\beta}^*\bar{e}_w)\|\\
      &\leq B\;  \sum_{\beta\in [d]}\sum_{|w|=m-1}q^{\frac{m(m-1)}{2}}\;C_q^{3/2}(2m) \;  \|{L}_{w\beta}^*(\bar{e}_w)\|_T\\
      &\leq  C_q^{3/2}d^{m} q^{\frac{m(m-1)}{2}} (2m) \;  C^{m} B^{m}\sqrt{[m-1]_q!}
  \end{align*}
  where $B,C$ are as written towards the end of proof of Theorem \ref{thm:existence of conjugate variables} and we have used   estimates as in \eqref{eq:estimate for xi_m,alpha} and  \eqref{eq:estimate of T-norm of e_w}. Thus, by using the ratio test, it follows that
  \begin{equation*}
      \sum_{m=1}^\infty\|W(\xi_{m,\alpha})\|\leq C_q^{3/2} \sum_{m=1}^\infty d^{m}\; q^{\frac{m(m-1)}{2}} (2m) \;  C^{m} B^{m}\sqrt{[m-1]_q!}<\infty
  \end{equation*}
  concluding the proof.
\end{proof}

\begin{rem} 
(1) In the expression for the conjugate variables given in \cite[Corollary 4.7]{MS}, certain products of free \emph{right} annihilation operators appear, while in our result products of \emph{left} annihilation operators appear. One can however verify that in the $q$-Gaussian case, the two situations indeed agree. This follows by noting that if ${R}_{\beta w}$ is  a product of appropriate right free annihilation operators then 
\begin{align*}\sum_{|v|=2m-1}\sum_{|w|=m-1}\lambda_v\langle  \bar{e}_w,  {R}_{\beta w }e_v\rangle_q
&=\sum_{|w'|=m-1}\sum_{|w|=m-1}\lambda_{w'\beta w}\langle {e}_{w}, e_{w'}\rangle_q\\
&=\sum_{|w|=m-1}\sum_{|w'|=m-1}\lambda_{w'\beta w}\langle e_{w'}, e_{w}\rangle_q\\
&=\sum_{|v|=2m-1}\sum_{|w'|=m-1}\lambda_v\langle\Bar{e}_{w'}, {L}_{ w'\beta}e_v\rangle_q
\end{align*}
simply because in the $q$-Gaussian case, one chooses an orthonormal basis $\{e_1,\cdots, e_d\}$ with $e_i\in \hr$ so that $\bar{e}_w=e_w$ and $\langle e_w, e_{w'}\rangle_T=\langle e_{w'}, e_w\rangle_T$ as they are always real for any two words $w,w'$ (summation of the above  form appears in the proof of Theorem \ref{thm:existence of conjugate variables}). In particular, we can replace $L_{{\beta w}}$ by $R_{\beta w}$ in Corollary \ref{cor:conjugates variables are in M Omega} which produces exactly the same formula as in \cite[Corollary 4.7]{MS}. Note that  equality of above forms no longer holds in non-tracial settings, so left free annihilation operators seem to be  more convenient choice.

(2) The orthonormal set $\{e_1,\cdots,e_d\}$ can be chosen to be eigenvectors of the analytic generator $A$ of $U_t$ so that   each element $W(e_k)$ is  analytic  in $\M_T$ (w.r.t the modular automorphism group $\sigma^\varphi$ of the canonical state $\varphi$); consequently the conjugate variables $\xi_i$'s are also analytic as they are norm-limits of vectors which are linear combinations of  tensors of $e_i$'s.

(3) When $q_{ij}=q$ for all $i,j$,  we get
\[ \xi_{ \alpha}=\sum_{m=1}^\infty (-1)^{m-1}\sum_{\beta\in [d]}\sum_{|w|=m-1}q^{\frac{m(m-1)}{2}}\langle e_\alpha, \Bar{e}_{\beta}\rangle_U  {L}_{w\beta}^*(\bar{e}_w),\]
which is slightly different that the formula in \cite[Remark 1.5]{KSW} because the initial assumption on the set $\{e_1,\ldots,e_d\}\subseteq\hc$ in \cite{KSW} is that it is orthonormal in the {undeformed} inner product of $\hc$ while our assumption is that the set is orthonormal in the deformed inner product of $\hu$. In particular,
if $q=0$ then $\xi_{m,\alpha}=0$ for all $m\geq 2$, so that \[\xi_\alpha=\xi_{1,\alpha}=\sum_{\beta\in [d]}\langle e_\alpha, \Bar{e}_\beta\rangle_U l(\beta)^*\Omega=\sum_{\beta\in [d]}\langle e_\alpha, \bar{e}_\beta\rangle_U\;e_\beta.\]
\end{rem}

\subsection*{Lipschitz Conjugate Variables}\label{sec:Lipschitz}
Our aim now is to prove that the conjugate variables $(\xi_1,\ldots,\xi_d)$ as obtained  above
are {\em Lipschitz} i.e. \[\xi_\ell\in\dom\bar{\partial}_k \;\;\text{and}\;\;  \bar{\partial}_k(\xi_\ell )\in \M_T\otimes\M_T^{\op}\] for all $1\leq k,\ell \leq d$, where $\partial_k$ is the quasi-free difference quotients as defined in the beginning of this Section. For this purpose, 
 we consider a different set of rules for partitions and counting their crossings exactly as in \cite[Section 5]{MS}. The partitions again consist of singletons and pairs, and we draw them 
in the following ways to locate crossings:
\begin{enumerate}
    \item Consider $n+1$ vertices $n>\cdots>1>0.$
    \item The vertex $0$ must be coupled with some $k\in \{1,\ldots, n\}$ with height 1.
    \item Each $\ell\in \{1,\ldots, k-1\}$ is a singleton or is coupled with one of $\{k+1,\ldots,n\}$ with height $\ell+1$
    \item Vertices which are not coupled with one of $\{1,\ldots, k-1\}$ should be singletons and are drawn with straight lines to the top.
\end{enumerate}
Let $C(n+1)$ be the set of partitions defined by the rules above. For each $\pi\in C(n+1)$, denote by $s_\ell (\pi)$ and $s_r(\pi)$ respectively the set of singletons in the left area $n\geq l>\pi(0)$ and the set of singletons in the right area $\pi(0)>\ell\geq 1$. Let $p(\pi)$ denote the set of pairings. 

We now give a formula for the action of $\partial_k$'s on the algebra $\C[A_1,\ldots,A_d]$. The algebra $\C[A_1,\ldots,A_d]$ is nothing but the space $\Span\{W(e_w); w\in [d]^*\}$. This  follows by  noting the following calculation (see eq. \eqref{eq:annihilation operator acting on simple tensors}): \begin{align}\label{eq:expression for product of Wick operators}W(\xi_{n+1}\cdots\xi_{1})=W(\xi_{n+1})W(\xi_n\cdots\xi_{1})-\sum_{k=1}^{n}\langle\bar{\xi}_{n+1},\xi_k\rangle_U\prod_{t=1}^{n-k}q(i_k, i_{k+t})W(\xi_n\cdots\widehat{\xi}_k\cdots\xi_1)\end{align}
whenever $\xi_k\in\hu^{(i_k)}$ for $i_k\in{N}$, $1\leq k\leq n+1$. Therefore it suffices to understand the action of $\partial_k$'s on the operators $W(e_w)$ for each word $w$.

\begin{prop}\label{prop:expression for partial}
    For each $\alpha\in[d]$ and $w=\alpha_{n}\cdots\alpha_1\in[d]^* $, $n\geq1$, we have
    \begin{equation*}
        \partial_\alpha W(e_w)=\sum_{\pi\in C(n+1)}(-1)^{|p(\pi)|-1} {D}(\pi)\; {E}(\pi) W(e_{s_\ell (\pi)})\otimes W(e_{s_r(\pi)})^{\op}
    \end{equation*}
    where     \begin{align*}
       & {D(\pi)}:=\prod_{\underset{0\leq v<u<\pi(0)}{u,v \text{ belong to a pairing}}}q(i_v, i_u)\cdot\prod_{\substack{u, v\text{ belong to a pairing}\\{0< v<u\leq \pi(0)}\\
       \pi(u)>\pi(v)} } q(i_{\pi(v)}, i_{\pi(u)})\\
       &\quad\quad\quad\quad\quad\quad\quad\quad\cdot\prod_{\underset{v\notin s_r(\pi), u\in s_r(\pi)}{0<v<u<\pi(0)}}q(i_v,i_u)
\cdot\prod_{\underset{v\in s_\ell(\pi), u\notin s_\ell(\pi)}{\pi(0)<v<u\leq n}} q(i_v,i_u)\\
       & {E}(\pi):= \prod_{\underset{{k>\ell}}{(k,\ell)}\in p(\pi)}\langle \bar{e}_{\alpha_k}, e_{\alpha_\ell}\rangle_U\\
&e_{s_\ell (\pi)}=e_{\alpha_{j_r}}\cdots e_{\alpha_{j_1}}\;\;\text{  for }\;\; {s_\ell (\pi)}=\{j_r>\ldots >j_1\} \;\;\;\text{ and }\\
&e_{s_r(\pi)}=e_{\alpha_{j'_s}}\cdots e_{\alpha_{j'_1}}\;\;\text{ for }\;\; s_r(\pi)=\{j'_s>\ldots >j'_1\},
    \end{align*}
    with the prescription that $\alpha_0=\alpha$.
\end{prop}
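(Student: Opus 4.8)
The plan is to prove the formula by induction on $n=|w|$, using the Wick-product recursion \eqref{eq:expression for product of Wick operators} together with the Leibniz rule for the derivation $\partial_\alpha$, following the scheme of the proof of Proposition~\ref{prop: Algebraic formula for dual variable D_alpha} and of \cite[Section 5]{MS}. Since $\C[A_1,\dots,A_d]=\Span\{W(e_w):w\in[d]^*\}$ it suffices to verify the identity on each $W(e_w)$. For $n=1$ one has $W(e_{\alpha_1})=A_{\alpha_1}$, hence $\partial_\alpha W(e_{\alpha_1})=\langle\bar e_{\alpha_1},e_\alpha\rangle_U\,\mathds{1}\otimes\mathds{1}$, which is exactly the contribution of the unique partition of $C(2)$ (the single arc $0\leftrightarrow 1$, with $|p(\pi)|=1$, $D(\pi)=1$ and no singletons).

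For the inductive step write $w=\alpha_{n+1}\alpha_n\cdots\alpha_1$ and apply $\partial_\alpha$ to
\[
W(e_{\alpha_{n+1}\cdots\alpha_1})=A_{\alpha_{n+1}}W(e_{\alpha_n\cdots\alpha_1})-\sum_{k=1}^n\langle\bar e_{\alpha_{n+1}},e_{\alpha_k}\rangle_U\prod_{t=1}^{n-k}q(i_k,i_{k+t})\,W(e_{\alpha_n\cdots\widehat{\alpha}_k\cdots\alpha_1}).
\]
By the Leibniz rule, $\partial_\alpha\bigl(A_{\alpha_{n+1}}W(e_{\alpha_n\cdots\alpha_1})\bigr)$ splits as $A_{\alpha_{n+1}}\,\partial_\alpha W(e_{\alpha_n\cdots\alpha_1})$ plus $W(e_{\alpha_n\cdots\alpha_1})\,\partial_\alpha(A_{\alpha_{n+1}})=\langle\bar e_{\alpha_{n+1}},e_\alpha\rangle_U\,\mathds{1}\otimes W(e_{\alpha_n\cdots\alpha_1})^{\op}$; the latter is precisely the term of $C(n+2)$ in which $0$ is paired with the leftmost vertex $n+1$ (so $s_\ell(\pi)=\emptyset$, $s_r(\pi)=\{n,\dots,1\}$, $D(\pi)=1$, $|p(\pi)|=1$). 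On the remaining terms one invokes the induction hypothesis and then applies \eqref{eq:expression for product of Wick operators} once more to expand $A_{\alpha_{n+1}}W(e_{s_\ell(\pi)})$ as $W(e_{\alpha_{n+1}}e_{s_\ell(\pi)})$ plus the contractions of $\alpha_{n+1}$ against the left-singletons of $\pi$. The leading pieces $W(e_{\alpha_{n+1}}e_{s_\ell(\pi)})\otimes W(e_{s_r(\pi)})^{\op}$ supply the partitions of $C(n+2)$ in which $n+1$ is a left-singleton, whereas the contraction pieces, together with the terms produced by $\partial_\alpha$ from the recursion summands $W(e_{\alpha_n\cdots\widehat{\alpha}_k\cdots\alpha_1})$, supply the partitions in which $n+1$ lies in a pair.

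The combinatorial core is the bookkeeping of these identifications. Exactly as in Proposition~\ref{prop: Algebraic formula for dual variable D_alpha}, one sets up order-preserving relabellings — deleting the contracted left-singleton, respectively inserting the new vertex $n+1$ and pairing it with the appropriate $j$ — yielding a bijection between the terms on the two sides. Along it one checks: (i) $|p(\pi)|$ grows by exactly one whenever a pair is created, so the sign $(-1)^{|p(\pi)|-1}$ flips and absorbs the minus in \eqref{eq:expression for product of Wick operators}; (ii) the two tensor legs transform correctly, the left leg receiving $W(e_{s_\ell})$ and the right leg $W(e_{s_r})^{\op}$ — here one uses that a singleton at the far-left vertex $n+1$ crosses no arc, so its only effect is covered by the fourth product of $D(\pi)$; and (iii) the weight $D(\pi')$ differs from $D(\pi)$ (or from $D(\pi)$ times the factor $\prod_{t=1}^{n-k}q(i_k,i_{k+t})$ carried by the recursion) precisely by the new crossings created by the inserted arc or singleton line. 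For (iii) one uses repeatedly, as in Remark~\ref{rem: different formulas for Q(pi)}, that a nonzero $E(\pi')$ forces $i_t=i_{\pi'(t)}$ for every pair $(t,\pi'(t))$, so that one may swap $q(i_t,i_r)$ for $q(i_{\pi'(t)},i_r)$ at will, combined with $q(i,j)=q(j,i)$ and the index-reversal identity of the type \eqref{eq:2.3}.

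The step I expect to be the main obstacle is point (iii): reconciling the explicit $q$-prefactor $\prod_{t=1}^{n-k}q(i_k,i_{k+t})$ of \eqref{eq:expression for product of Wick operators} with the change $D(\pi)\mapsto D(\pi')$, while keeping track of all four product blocks of $D(\pi)$ — the two over pairings (crossings read on the line of $u$ and on the line of $\pi(u)$), the one over right-singletons and the one over left-singletons — under both the insertion of a pair $(n+1,j)$ in the right region and the appearance of a new left- or right-singleton. This is the same ledger that was balanced for $Q(\pi)$ in Proposition~\ref{prop: Algebraic formula for dual variable D_alpha} (compare \eqref{eq:expression for Q(pi')}). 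The genuinely new feature, absent from the one-sided $D_\alpha$ computation, is that the target is the bimodule $\M_T\wot\M_T^{\op}$, so one must additionally keep the singletons sorted into the left area $n\geq\ell>\pi(0)$ and the right area $\pi(0)>\ell\geq 1$, and verify that no contraction ever transports a vertex across the $0\leftrightarrow\pi(0)$ arc.
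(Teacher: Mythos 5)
Your proposal follows essentially the same route as the paper's proof: induction on $n$ via the Wick recursion \eqref{eq:expression for product of Wick operators} and the Leibniz rule, identification of the $\mathds{1}\otimes W(e_{\alpha_n\cdots\alpha_1})^{\op}$ term with the arc $0\leftrightarrow n+1$, cancellation of the contractions of $A_{\alpha_{n+1}}$ against left-singletons with the corresponding recursion summands, conversion of the remaining summands into new pairs $(n+1,v)$, and the same crossing/sign ledger handled via the symmetry tricks of Remark \ref{rem: different formulas for Q(pi)}. The steps you flag as the main obstacle are exactly the ones the paper resolves by the analogue of \eqref{eq:expression for Q(pi')}, so the outline is correct as it stands.
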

\begin{rem}
    As in Proposition \ref{prop: Algebraic formula for dual variable D_alpha}, we have divided the expression for $D(\pi, w)$ into four products. The terms in the first product (resp. the second product) correspond to the crossings occurring on the line of the vertex $`u$' (resp. $`\pi(u)$') of any pair $(u, \pi(u))$ for $0<u<\pi(0)$ with the lines of vertices  of another pair.
    The third product corresponds to the crossings in  the right side of $\pi(0)$ on the lines of singletons  with all pairs except $(0,\pi(0))$, while the fourth product corresponds to the crossings on the lines of  singletons in the left side of $\pi(0)$.
\end{rem}
\begin{proof}
    As in  \cite[Proposition 5.1]{MS}, the proof goes by induction on $n$. For $n=1$, the expression  is obvious because \[\partial_\alpha(W(e_{\alpha_1}))=\partial_\alpha(A_{\alpha_1})=\langle \bar{e}_{\alpha_1}, e_\alpha\rangle_U \mathds{1}\otimes\mathds{1}.\] So assume this to be true for some $n\geq 1$. Use then  \eqref{eq:expression for product of Wick operators} to calculate:
    \begin{align*}
        \partial_\alpha &W(e_{\alpha_{n+1}\cdots \alpha_1})=\partial_\alpha\left(W(e_{\alpha_{n+1}})W(e_{\alpha_n\cdots \alpha_1})-\sum_{u=1}^n  \prod_{u<t\leq n} q(i_{u},i_{t}) \langle \bar{e}_{\alpha_{n+1}} , e_{\alpha_u}\rangle_U\; W(e_{\alpha_n \ldots \widehat{\alpha}_u \ldots \alpha_1}) \right)\\
        &=\langle \Bar{e}_{\alpha_{n+1}}, e_{\alpha}\rangle_U \mathds{1}\otimes W(e_{\alpha_n\cdots \alpha_1})^{\op}+(W(e_{\alpha_{n+1}})\otimes \mathds{1})\partial_\alpha W(e_{\alpha_n\cdots\alpha_1})\\
        &\quad\quad\quad\quad\quad-\sum_{v=1}^n  \prod_{v<t\leq n} q(i_{v},i_{t}) \langle \bar{e}_{\alpha_{n+1}} , e_{\alpha_v}\rangle_U\;\partial_\alpha W( e_{\alpha_n \ldots \widehat{\alpha}_u \ldots \alpha_1}). 
    \end{align*}
By using induction argument, we separately compute the last two terms in the above sum:
 \begin{align}\label{eq:Aalphan+1}
   \begin{split}  &(W(e_{\alpha_{n+1}})\otimes \mathds{1})\partial_\alpha W(e_{\alpha_n\cdots\alpha_1})=\sum_{\pi\in C(n+1)}(-1)^{|p(\pi)|-1} {D}(\pi) {E}(\pi) W(e_{\alpha_{n+1}})W(e_{s_\ell (\pi)})\otimes W(e_{s_r(\pi)})^{\op}\\
     &=\sum_{\pi\in C(n+1)}(-1)^{|p(\pi)|-1} {D}(\pi) {E}(\pi) W(e_{\alpha_{n+1}}e_{s_\ell (\pi)}) \otimes W(e_{s_r(\pi)})^{\op}\\
     &+\sum_{\pi\in C(n+1)}(-1)^{|p(\pi)|-1} {D}(\pi) {E}(\pi) \sum_{u=1}^{n(\pi)}  \prod_{t=u+1}^{n(\pi)} q(i_{j_u^\pi},i_{j_t^\pi}) \langle \bar{e}_{\alpha_{n+1}} , e_{\alpha_{j_u^{\pi}}}\rangle_U W(e_{\alpha_{j_{n(\pi)}^\pi} \ldots \hat{\alpha}_{j_u^\pi} \ldots \alpha_{j_1^\pi}}) \otimes W(e_{s_r(\pi)})^{\op}
     \end{split}
 \end{align}
 where $n(\pi)=|s_\ell(\pi)|$ and $s_{\ell}(\pi)=\{j_{n(\pi)}^{\pi}>\ldots >j_1^{\pi}\}$;  then compute
 \begin{align}\label{eq:partialbetaonealphan...1}
 \begin{split}
     \sum_{v=1}^n & \prod_{v<t\leq n} q(i_{v},i_{t}) \langle \bar{e}_{\alpha_{n+1}} , e_{\alpha_v}\rangle_U\;\partial_\alpha W( e_{\alpha_n \ldots \widehat{\alpha_v} \ldots \alpha_1})\\
     &=\sum_{\sigma\in C(n)}\sum_{v=1}^n  \prod_{v<t\leq n} q(i_{v},i_{t}) \langle \bar{e}_{\alpha_{n+1}} , e_{\alpha_v}\rangle_U(-1)^{|p(\sigma)|-1}{D}(\sigma, v){E}(\sigma, v) W(e_{s_\ell (\sigma)})\otimes W(e_{s_r(\sigma)})^{\op}
     \end{split}
 \end{align}
 where $D(\sigma, v)$ and $E(\sigma, v)$ are written respectively  for $D(\sigma)$ and $E(\sigma)$ only but just to emphasize that   each $\sigma\in C(n)$ acts on the word $\alpha_n\cdots\widehat{\alpha}_v\cdots\alpha_1$. Note that each term of the last sum of \eqref{eq:Aalphan+1} appears  as   one of the terms in the last sum of \eqref{eq:partialbetaonealphan...1}. To see this, we follow the similar argument as in Theorem \ref{thm:existence of conjugate variables}. Let $\pi\in C(n+1)$ and $u\in \{1,\ldots, n(\pi)\}$, where $n(\pi)=|s_\ell (\pi)|$. 
 Set $v=s_\ell (\pi)_u=j_u^\pi$ and obtain the partition $\sigma\in C(n)$ of the set $\{0,\ldots, n\}\setminus\{v\}$ in an order-preserving fashion by removing the singleton $s_\ell(\pi)_u=v$. Note that $p(\pi)=p(\sigma), s_\ell (\pi)=s_\ell {(\sigma)}\setminus\{v\}$ and $s_r(\pi)=s_r(\sigma)$. Hence ${E}(\pi)={E}(\sigma)$. Also note that the extra crossings in $\pi$ as compared to $\sigma$ may be occurring only due to the straight line at $v=j_u^\pi$. Therefore we have
 \begin{align*}
     {{D}(\pi)}={{D}(\sigma,v)}\cdot\prod_{\underset{t\notin s_\ell (\pi)}{t>v}}q(i_v, i_t)=\frac{\prod_{v<t\leq n}q(i_v,i_t)}{\prod_{\underset{t\in s_\ell (\pi)}{v<t\leq n}}q(i_v,i_t)}=\frac{\prod_{v<t\leq n}q(i_v,i_t)}{\prod_{{u<r\leq n(\pi)}}q(i_{j_u^\pi},i_{j_r^\pi})}
     \end{align*}
   where it is to be understood that $\frac{0}{0}$ is $0$.  This shows our claim that  all terms of the last sum of \eqref{eq:Aalphan+1} appear  in the last sum of \eqref{eq:partialbetaonealphan...1}, and the same time note that the remaining terms in \eqref{eq:partialbetaonealphan...1} correspond to those $\sigma$ and $v$ for which $v\leq \sigma(0)$. Again as in Theorem \ref{thm:existence of conjugate variables}, we identify such pair $(\sigma, v)$ i.e.  $\sigma\in C(n)$ and $v\leq \sigma(0)$, with a partition $\sigma'\in C(n+2)$ which creates a new pair between $v$ and $n+1$ with the requirement that $\sigma'\setminus(n+1, v)=\sigma$.  Note that $\sigma'(0)=\sigma(0)+1$, $s_\ell (\sigma')=s_\ell (\sigma), s_r(\sigma')=s_r(\sigma)$ and $|p(\sigma')|=|p(\sigma)|+1$. Also note that the extra crossings in $\sigma'$ occur with all the pairs (twice) and singletons (once) to the left of $v$ or on the line of $v$; so that whenever $E(\sigma)\neq 0$, then we have
     \begin{align*}
         {D}(\sigma')={D}(\sigma)\cdot\prod_{v< t\leq n}q(i_v, i_t)
     \end{align*}
     which follows by  similar arguments as in \eqref{eq:expression for Q(pi')} from Proposition \ref{prop: Algebraic formula for dual variable D_alpha}.
     In particular, we get 
     \[E(\sigma'){D}(\sigma')=E(\sigma){D}(\sigma)\cdot\prod_{v< t\leq n}q(i_v, i_t).\]
     Finally by subtracting \eqref{eq:Aalphan+1} from \eqref{eq:partialbetaonealphan...1} yields  
     \begin{align*}
         \partial_\alpha W(e_{\alpha_{n+1}\cdots\alpha_1})&=\langle \Bar{e}_{\alpha_{n+1}}, e_{\alpha}\rangle_U \mathds{1}\otimes W(e_{\alpha_n\cdots \alpha_1})^{\op}\\
         &+\sum_{\pi\in C(n+1)}(-1)^{|p(\pi)|-1} {D}(\pi)\; {E}(\pi)\; W(e_{\alpha_{n+1}}e_{s_\ell (\pi)}) \otimes W(e_{s_r(\pi)})^{\op}\\
         &+\sum_{\substack{{\sigma'\in C(n+2)}\\ \sigma'(n+1)\text{ is not a singleton}\\
         \sigma'(0)\neq n+1}}  \langle \bar{e}_{\alpha_{n+1}} , e_{\alpha_v}\rangle_U(-1)^{|p(\sigma)|-1}{D}(\sigma'){E}(\sigma') W(e_{s_\ell (\sigma)})\otimes W(e_{s_r(\sigma)})^{\op}\\
         &=\sum_{{\sigma'\in C(n+2)}}  (-1)^{|p(\sigma)|-1}{D}(\sigma'){E}(\sigma')\; W(e_{s_\ell (\sigma)})\otimes W(e_{s_r(\sigma)})^{\op}
     \end{align*}
     where the last equality follows by noting that $\langle \bar{e}_{{\alpha_{n+1}}}, e_\alpha\rangle_U\mathds
     {1}\otimes W(e_{\alpha_n\cdots\alpha_1})^{\op}$ corresponds to the partition in $C(n+2)$ which connects  $0$ to $n+1$ (in which case all other points are right singletons); while each $\pi\in C(n+1)$ corresponds to those partitions $\sigma'\in C(n+2)$ such that $n+1$ is a singleton, in which case we have $p(\sigma')=p(\pi), {D}(\sigma')={D(\pi)}, s_r(\sigma')=s_r(\pi)$ and $ s_\ell (\sigma')=\{n+1\}\cup s_\ell (\pi)$ so that $e_{\alpha_{n+1}}e_{s(\pi)}=e_{s(\sigma')}$. This completes the proof by induction.
\end{proof}

\begin{rem}
    One can also obtain Theorem \ref{thm:existence of conjugate variables}  using the formulas in  Proposition \ref{prop:expression for partial}  for the  quasi-free difference quotients as the conjugate variables $\xi_\alpha$'s are nothing but $\partial_\alpha^*(\mathds{1}\otimes\mathds{1})$ and it is an easy check that the expression $\langle\mathds{1}\otimes\mathds{1}, \partial_\alpha(\sum_{w\in [d]^*}\lambda_wW(e_w))\rangle_{L^2(\M_T\otimes\M_T^{\op})}$ is exactly the one in eq. \eqref{eq:expression for the inner product of omegea, D_alpha omega}. In fact, one can obtain  Proposition \ref{prop: Algebraic formula for dual variable D_alpha}  by checking that $D_\alpha:=(\id\otimes\varphi^{\op})\partial_\alpha$ form a normalized dual system for $1\leq \alpha\leq d$.
\end{rem}

We keep the notations of Corollary \ref{cor:conjugates variables are in M Omega} and Proposition \ref{prop:expression for partial} for the following result.
\begin{cor}\label{cor: Lipstisz conjugate}
The system of conjugate variables $(\xi_1,\ldots,\xi_d)$ corresponding to the tuple $(D_1,\ldots, D_d)$ is Lipschitz.
\end{cor}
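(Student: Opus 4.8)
The plan is to adapt the strategy used in the proof of Theorem \ref{thm:existence of conjugate variables} and Corollary \ref{cor:conjugates variables are in M Omega}, now applied to the derivation $\partial_k$ rather than to $D_k$. Recall that by definition $(\xi_1,\ldots,\xi_d)$ is Lipschitz if $\xi_\ell\in\dom\bar\partial_k$ and $\bar\partial_k(\xi_\ell)\in\M_T\otimes\M_T^{\op}$ for all $k,\ell$. Since $\xi_\ell=\sum_{m}(-1)^{m-1}\xi_{m,\ell}$ with $\xi_{m,\ell}=\sum_{\beta,|w|=m-1}(\text{scalar})\,L_{w\beta}^*(\bar e_w)$ a vector living in the particle space $\hu^{\otimes(2m-1)}$, and since $\partial_k$ is a closable operator from $L^2(\M_T)$ to $L^2(\M_T\wot\M_T^{\op})$, it suffices to show two things: first, that $W(\xi_{m,\ell})\in\dom\partial_k$ with $\partial_k W(\xi_{m,\ell})\in\M_T\wot\M_T^{\op}$ for each $m$; and second, that $\sum_m\|\partial_k W(\xi_{m,\ell})\|_{\M_T\wot\M_T^{\op}}<\infty$, so that $\bar\partial_k(\xi_\ell)=\sum_m(-1)^{m-1}\partial_k W(\xi_{m,\ell})$ converges in the (uniform) norm of $\M_T\wot\M_T^{\op}$ and hence lies there. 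The first point is automatic since each $W(\xi_{m,\ell})\in\C[A_1,\ldots,A_d]$ (being a Wick word on a finite tensor), so Proposition \ref{prop:expression for partial} applies verbatim.

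The core estimate is the second point. First I would expand $\xi_{m,\ell}$ as a linear combination of simple tensors $e_v$ with $|v|=2m-1$; the number of such terms with nonzero coefficient is at most $d^{2m-1}$, and each coefficient is bounded by $q^{m(m-1)/2}\cdot B^2\cdot C^m$ (reading off the scalars and using $\|L_{w\beta}^*\|\le C^m$ exactly as in \eqref{eq:estimate for xi_m,alpha}), where $B,C$ are the constants from the proof of Theorem \ref{thm:existence of conjugate variables}. For each such $e_v$, Proposition \ref{prop:expression for partial} writes $\partial_k W(e_v)$ as a sum over $\pi\in C(2m)$ of terms $(-1)^{|p(\pi)|-1}D(\pi)E(\pi)\,W(e_{s_\ell(\pi)})\otimes W(e_{s_r(\pi)})^{\op}$. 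Here $|D(\pi)|\le q^{|p(\pi)|-1}\le 1$ and $|E(\pi)|\le B^{2|p(\pi)|}$ (bounding each inner product $\langle\bar e_{\alpha_k},e_{\alpha_\ell}\rangle_U$ by $B^2$); the operator norms $\|W(e_{s_\ell(\pi)})\|$ and $\|W(e_{s_r(\pi)})\|$ are controlled by the Bożejko–Haagerup inequality \eqref{eq:Bozejko inequality}, namely $\|W(e_S)\|\le C_q^{3/2}(|S|+1)\sqrt{[|S|]_q!}\,B^{|S|}$ for any index set $S$, and $|s_\ell(\pi)|+|s_r(\pi)|\le 2m-1$. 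Finally $|C(2m)|\le (2m)!$. Putting all of this together I would obtain a bound of the shape
\[
\|\partial_k W(\xi_{m,\ell})\|_{\M_T\wot\M_T^{\op}}\;\le\; K^m\,(2m)!\,\sqrt{[2m-1]_q!}\;q^{m(m-1)/2}
\]
for some constant $K$ depending only on $d,q$ (absorbing $B,C,C_q$), and since the Gaussian-type factor $q^{m(m-1)/2}$ dominates the factorial and polynomial growth, the ratio test gives $\sum_m\|\partial_k W(\xi_{m,\ell})\|<\infty$.

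The one point requiring genuine care — and the main obstacle — is justifying that $\bar\partial_k(\xi_\ell)$ is actually $\sum_m(-1)^{m-1}\partial_k W(\xi_{m,\ell})$: one needs that $\xi_\ell$, as the $L^2$-limit of the partial sums $\eta_M:=\sum_{m\le M}(-1)^{m-1}\xi_{m,\ell}$, lies in $\dom\bar\partial_k$ and that $\partial_k W(\eta_M)$ converges to the claimed element. This is where closability of $\partial_k$ (guaranteed by the existence of the conjugate variable $\xi_k$, as recalled before Proposition \ref{prop: Algebraic formula for dual variable D_alpha}) enters: since $\eta_M\to\xi_\ell$ in $L^2(\M_T)$ and $\partial_k W(\eta_M)=\sum_{m\le M}(-1)^{m-1}\partial_k W(\xi_{m,\ell})$ converges in the uniform norm of $\M_T\wot\M_T^{\op}$ — hence a fortiori in $L^2(\M_T\wot\M_T^{\op})$ — to $\zeta:=\sum_m(-1)^{m-1}\partial_k W(\xi_{m,\ell})$, closability of $\bar\partial_k$ forces $\xi_\ell\in\dom\bar\partial_k$ and $\bar\partial_k(\xi_\ell)=\zeta\in\M_T\wot\M_T^{\op}$. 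Since $k,\ell\in[d]$ were arbitrary, the conjugate system is Lipschitz. A final routine remark: as noted in Remark (2) after Corollary \ref{cor:conjugates variables are in M Omega}, choosing $\{e_1,\ldots,e_d\}$ to consist of eigenvectors of $A$ makes everything in sight analytic, so no domain subtleties arise with the modular group either.
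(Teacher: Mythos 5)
Your proposal is correct and follows essentially the same route as the paper: decompose $\xi_\alpha=\sum_m(-1)^{m-1}\xi_{m,\alpha}$, expand each $L_{w\beta}^*(\bar e_w)$ into simple tensors with controlled coefficients, bound $\partial_\beta W(e_{w'})$ term by term via Proposition \ref{prop:expression for partial} together with $|D(\pi)|\leq 1$, the bound on $|E(\pi)|$, the Bo\.zejko--Haagerup inequality and $|C(n+1)|\leq(n+1)!$, and conclude by the ratio test since $q^{m(m-1)/2}$ dominates. Your explicit closability argument at the end is a welcome clarification of a step the paper leaves implicit, but it is not a different method.
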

\begin{proof}
   Fix $\alpha,\beta\in [d]$. First note that each $\xi_{m,\alpha}\in \FockTHalg$ so that $ W(\xi_{m,\alpha})\in\dom\partial_\beta$; hence by Proposition \ref{prop:expression for partial}, $\partial_\beta W(\xi_{m,\alpha})\in \M_T\Bar{\otimes}\M_T^{\op}$. We  show that $\sum_{m=1}^\infty\|\partial_\beta W(\xi_{m,\alpha})\|<\infty$, from which it will follow that $\sum_{m=1}^\infty(-1)^{m-1}\partial_\beta W(\xi_{m,\alpha})$ converges both in norm and in $L^2(\M_T\Bar{\otimes}\M_T^{\op})$ proving that  $ W(\xi_\alpha)\in \dom\bar{\partial}_\beta$ and $\Bar{\partial}_\beta W(\xi_\alpha)\in \M_T\bar{\otimes}\M_T^{\op}$.   Assume that $e_\alpha\in \h_\C^{(i_\alpha)}$ so that
    \begin{align*}
        \xi_{m, \alpha}=\sum_{\gamma\in [d]}\sum_{|w|=m-1}\prod_{{1\leq v< u\leq m-1}}q(i^{(w)}_{v}, i^{(w)}_u)\prod_{t=1}^{m-1}q(i_\alpha, i^{(w)}_t)\langle e_\alpha, \Bar{e}_{\gamma}\rangle_U  {L}_{w\gamma}^*(\bar{e}_w).
    \end{align*}
By Proposition \ref{prop:expression for partial},    note that for each word $w'$ of length $n$
\begin{align*}
    \|\partial_\beta 
 W(e_{w'})\|\leq \sum_{\pi\in C(n+1)} |{D}(\pi)|\; |{E}(\pi)| \;\|W(e_{s_\ell (\pi)})\|\;\|W( e_{s_r(\pi)})\|.
\end{align*}
But for each $\pi\in C(n+1)$, we have
\begin{align*}
    &|{D}(\pi)|\leq 1,\\
    &|{E}(\pi)|\leq B^{|p(\pi)|}\leq B^{n} \;\;\;\mbox{ where }{B=\max_{\gamma\in[d]}\{1,\|\bar{e}_\gamma\|_U\}},\\
    &\|W(e_{s_\ell (\pi)})\|\leq C_q^{3/2}(|s_\ell (\pi)|+1)\|e_{s_\ell (\pi)}\|_T\leq C_q^{3/2}(|s_\ell (\pi)|+1)B^{|s_\ell(\pi)|}\sqrt{[|s_\ell (\pi)|]_q!}\\&\quad\quad\quad\quad\quad\;\leq C_q^{3/2}(n+1)B^n\sqrt{[n]_q!},\\
    &\|W(e_{s_r(\pi)})\|\leq C_q^{3/2}(n+1)B^n\sqrt{[n]_q!}
\end{align*}
    where the last two inequalities are due to Bo\.zejko-Haagerup type inequality and the  estimate as in \eqref{eq:estimate of T-norm of e_w}. Consequently we  obtain
    \begin{align}
    \begin{split}\label{eq:norm of partial W(ew)}
         \|\partial_\beta W(e_{w'})\|&\leq |C(n+1)|\;B^{3n}\;C_q^3\;(n+1)^2\;[n]_q!\\
         &\leq (n+1)!\;B^{3n}\;C_q^3\;(n+1)^2\;[n]_q!,
         \end{split}
    \end{align}
    where we have used the fact that the cardinality $|C(n+1)|$ of $C(n+1)$ is upper-bounded by $(n+1)!$ as we can think $C(n+1)$ as a subset of the symmetric group $S_{n+1}$.
    Note that the upper-bound of each $\|\partial_\beta 
 W(e_{w'})\|$ depends only on the length of the word $w'$ and is independent of the choice of the word itself. In particular,  since for each fixed word $w$ of length $m-1$ and $\gamma\in[d]$, ${L}^*_{w\gamma}(\Bar{e}_w)$ belongs to the subspace spanned by $\{e_{w'}
\}_{|w'|=2m-1}$, we write ${L}^*_{w\gamma}(\Bar{e}_w)=\sum_{|w'|=2m-1}\lambda_{w'}e_{w'}$ for some scalars $\lambda_{w'}\in\C$ and then estimate
    \begin{align*}
        \|\partial_\beta W({L}^*_{w\gamma}\Bar{e}_w)\|&\leq \sum_{|w'|=2m-1}|\lambda_{w'}|\;\|\partial_\beta 
 W(e_{w'})\|\\
        &\leq \sum_{|w'|=2m-1}|\lambda_{w'}|\;(2m)!\;B^{6m-3}\;C_q^3\;(2m)^2\;[2m-1]_q!.
    \end{align*}
    Further observe that for all $|w'|=2m-1$,  we have $\lambda_{w'}=L_{w'}L^*_{\gamma w}(\bar{e}_w)$ 
    so that \[|\lambda_{w'}|\leq \|L_{w'}\|\;\|{L}^*_{w\gamma}\|\;\|\Bar{e}_w\|_T\leq C^{3m-1}B^{{m-1}}\sqrt{[m-1]_q!}, \]
    where  as in Theorem \ref{thm:existence of conjugate variables}, $C=\max_{\gamma\in[d]}\|{l}(e_\gamma)\|$, $B$ is as above and the estimate of \eqref{eq:estimate of T-norm of e_w} is used. It follows that 
    \begin{align*}
        \|\partial_\beta W({L}^*_{w\gamma}\Bar{e}_w)\|\leq d^{2m-1}\;(2m)!\;C^{3m-1}\;B^{7m-4}\;C_q^3\;(2m)^2\;[2m-1]_q!\;\sqrt{[m-1]_q!}.
    \end{align*}
    Gathering all the estimates above, we finally arrive at the following:
    \begin{align*}
        \|\partial_\beta W(\xi_{m,\alpha})\|&\leq \sum_{\gamma\in[d]}\sum_{|w|=m-1}q^{\frac{m(m-1)}{2}}\|e_\alpha\|_U\;\|\Bar{e}_\gamma\|_U\;\|\partial_\beta W({L}^*_{w\gamma}\Bar{e}_w)\|\\
        &\leq q^{\frac{m(m-1)}{2}}\;d^{3m-1}\;(2m)!\;C^{3m-1}\;B^{7m-4}\;C_q^3\;(2m)^2\;[2m-1]_q!\;\sqrt{[m-1]_q!}
    \end{align*}
    therefore we conclude by using the ratio test that
    \begin{align*}
        \sum_{m=1}^\infty\|\partial_\beta W(\xi_{m,\alpha})\|\leq \sum_{m=1}^\infty q^{\frac{m(m-1)}{2}}\;d^{3m-1}(2m)!C^{3m-1}B^{7m-4}C_q^3\;(2m)^2[2m-1]_q!\;\sqrt{[m-1]_q!}<\infty
    \end{align*}
 thus   completing the proof.
\end{proof}

\subsection*{Power Series Expansion of Conjugate Variables}
The fact that the conjugate variables belong to $\M_T$ and are Lipschitz can also be obtained by writing each  $W(e_w)$ (for a word $w$) as a non-commutative polynomial in  $\{W(e_1), \ldots, W(e_d)\}$ and then by estimating their norms. This approach  avoids the use of Bo\.zejko-Haagerup inequality. The non-commutative power series thus obtained will have infinite radius of convergence.  We describe this method below by following the techniques of \cite{MS}. Also see \cite[Corllary 5.2]{Yang}.

For the purpose, we denote by $D(n)$ the collection of partitions of $\{1,\ldots,n\}$ consisting of either singleton or pairs for $n\geq 1$. As before, denote by $p(\pi)$ the set of pairs and by $s(\pi)$ the set of singletons for any partition   $\pi\in D(n)$. Also we set  $\pi(u)=v$ if $(u,v)\in p(\pi)$, and $\pi(u)=u$ if $u\in s(\pi)$.

\begin{prop}\label{prop:formula for W(e_w)}
    For $n\geq1$ and $\alpha_1,\ldots,\alpha_n\in [d]$, we have
    \begin{align*}
        W(e_{\alpha_n\cdots\alpha_1})=\sum_{\pi\in D(n)}(-1)^{|p(\pi)|}Q(\pi)E(\pi)W^{s(\pi)}
    \end{align*}
    where
    \begin{align*}
         &Q(\pi)=\prod_{\underset{v\in s(\pi)}{u<v<\pi(u)}}q(i_u,i_v)\cdot\prod_{u<v<\pi(v)<\pi(u)}q(i_u,i_v)q(i_{\pi(u)}, i_{\pi(v)})\cdot\prod_{u<v<\pi(u)<\pi(v)}q(i_u,i_v)\\
  &E(\pi):= \prod_{\underset{{k>\ell}}{(k,\ell)}\in p(\pi)}\langle \bar{e}_{\alpha_k}, e_{\alpha_\ell}\rangle_U\\
&W^{s(\pi)}:=W(e_{\alpha_{\ell_s}})\cdots W(e_{\alpha_{\ell_1}})\;\mbox{ for } s(\pi)=\{\ell_s>\ldots>\ell_1\}.
    \end{align*}
\end{prop}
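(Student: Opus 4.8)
The plan is to prove Proposition \ref{prop:formula for W(e_w)} by induction on $n$, exactly mirroring the recursive structure already used in Proposition \ref{prop: Algebraic formula for dual variable D_alpha} and Proposition \ref{prop:expression for partial}. The base case $n=1$ is immediate: $D(1)$ has only the single partition $\{1\}$, which is a singleton, so the right-hand side is $W(e_{\alpha_1})$ with $Q=E=1$ and $|p(\pi)|=0$. For the inductive step, I would start from the fundamental Wick-type recursion \eqref{eq:expression for product of Wick operators}, rewritten as
\begin{align*}
W(e_{\alpha_{n+1}\cdots\alpha_1})=W(e_{\alpha_{n+1}})\,W(e_{\alpha_n\cdots\alpha_1})-\sum_{u=1}^{n}\langle\bar{e}_{\alpha_{n+1}},e_{\alpha_u}\rangle_U\prod_{t=1}^{n-u}q(i_u,i_{u+t})\,W(e_{\alpha_n\cdots\widehat{\alpha}_u\cdots\alpha_1}),
\end{align*}
and then apply the inductive hypothesis to both $W(e_{\alpha_n\cdots\alpha_1})$ and to each $W(e_{\alpha_n\cdots\widehat{\alpha}_u\cdots\alpha_1})$.

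The combinatorial bookkeeping is the heart of the matter and proceeds in two movements, just as in the proofs above. First, expanding $W(e_{\alpha_{n+1}})\cdot\sum_{\pi\in D(n)}(-1)^{|p(\pi)|}Q(\pi)E(\pi)W^{s(\pi)}$ again via \eqref{eq:expression for product of Wick operators} produces two families of terms: those where $e_{\alpha_{n+1}}$ is prepended to $W^{s(\pi)}$ (these will correspond to partitions of $\{1,\dots,n+1\}$ in which $n+1$ is a singleton), and those where $W(e_{\alpha_{n+1}})$ gets contracted against one of the singleton letters $e_{\alpha_{j_u}}$ of $s(\pi)$. Second, I claim that the latter contracted terms exactly cancel against the terms arising from $\partial$-ing — rather, from expanding — the sum $\sum_u\langle\bar{e}_{\alpha_{n+1}},e_{\alpha_u}\rangle_U(\cdots)W(e_{\alpha_n\cdots\widehat{\alpha}_u\cdots\alpha_1})$ over those $\sigma\in D(n-1)$ for which the deleted position $u$ lies to the \emph{right} of everything (i.e.\ $u$ would be a singleton), while the \emph{remaining} terms — where $u$ is paired in the associated partition of $\{1,\dots,n+1\}$ — assemble precisely into the sum over $\pi\in D(n+1)$ with $(n+1,u)\in p(\pi)$. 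The sign works out because introducing the new pair $(n+1,u)$ increases $|p(\pi)|$ by one, matching the explicit minus sign in the recursion. The only genuinely delicate point is verifying that the $q$-weight $Q$ transforms correctly: when the new pair $(n+1,u)$ is inserted, the extra crossings it creates are exactly with the pairs and singletons lying to the left of position $u$ (or on the line of $u$), and after using $i_t=i_{\pi(t)}$ on pairings together with the symmetry $q(i,j)=q(j,i)$ one recovers the identity $Q(\sigma')=Q(\sigma)\cdot\prod_{u<t\le n}q(i_u,i_t)$, which absorbs the prefactor $\prod_{t=1}^{n-u}q(i_u,i_{u+t})$ from the recursion. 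This is the same mechanism as \eqref{eq:expression for Q(pi')} and I would simply cite that computation rather than redo it.

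I would then observe that every $\sigma'\in D(n+1)$ arises exactly once under this matching — either as a partition with $n+1$ a singleton (from the first family) or with $n+1$ paired to some $u\le n$ (from the cancellation-survivors in the second family) — so the two expansions combine to give $\sum_{\sigma'\in D(n+1)}(-1)^{|p(\sigma')|}Q(\sigma')E(\sigma')W^{s(\sigma')}$, completing the induction. The main obstacle, as in the two preceding propositions, is purely the crossing-counting: one must be scrupulous about which of the three products defining $Q(\pi)$ each new crossing contributes to (the $u<v<\pi(u)$ singleton-on-pair crossings, the doubly-nested $u<v<\pi(v)<\pi(u)$ crossings, and the interleaved $u<v<\pi(u)<\pi(v)$ crossings) when a point is deleted and re-inserted. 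Since this is entirely parallel to the verification already carried out in Proposition \ref{prop: Algebraic formula for dual variable D_alpha}, I would present the argument concisely, flagging the one new feature here — namely that the output lives in $\M_T$ (written as a word in the generators $W(e_\gamma)$) rather than in the Fock space, so that \emph{both} sides are genuine operator identities and no positivity or density argument is needed. Finally I would note the immediate consequence: since $|D(n)|\le(n+1)!!$ and $|E(\pi)|\le B^n$, $|Q(\pi)|\le 1$, the resulting non-commutative power series for $W(\xi_\alpha)$ has infinite radius of convergence, giving an alternative route to Corollary \ref{cor:conjugates variables are in M Omega} and Corollary \ref{cor: Lipstisz conjugate} that bypasses the Bo\.zejko--Haagerup inequality \eqref{eq:Bozejko inequality}.
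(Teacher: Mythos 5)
Your overall strategy (induction on $n$ via the Wick recursion \eqref{eq:expression for product of Wick operators}, matching the resulting terms with the partitions of $D(n+1)$ according to whether $n+1$ is a singleton or is paired with some $u\le n$, and verifying the sign together with the $q$-weight identity $Q(\tilde\pi)=Q(\sigma)\cdot\prod_{u<t\le n}q(i_u,i_t)$) is indeed the paper's proof. However, your ``first movement'' contains a genuine error: you propose to re-expand $W(e_{\alpha_{n+1}})\cdot W^{s(\pi)}$ via \eqref{eq:expression for product of Wick operators}, extract ``contraction terms'', and then cancel these against part of the second sum. But $W^{s(\pi)}=W(e_{\alpha_{\ell_s}})\cdots W(e_{\alpha_{\ell_1}})$ is by definition a \emph{product of single generators}, not the Wick operator $W(e_{\alpha_{\ell_s}}\otimes\cdots\otimes e_{\alpha_{\ell_1}})$ of a tensor word; left multiplication by $W(e_{\alpha_{n+1}})$ simply extends this product to $W^{s(\tilde\pi)}$ for the partition $\tilde\pi\in D(n+1)$ adjoining $n+1$ as a leftmost singleton. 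No contraction terms arise, so there is nothing to cancel. This is exactly where Proposition \ref{prop:formula for W(e_w)} differs from Propositions \ref{prop: Algebraic formula for dual variable D_alpha} and \ref{prop:expression for partial}: there the right-hand sides involve genuine Wick words $e_{s(\pi)}$, resp.\ $W(e_{s_\ell (\pi)})$, so multiplying by $A_{\alpha_{n+1}}$, resp.\ $W(e_{\alpha_{n+1}})$, does produce annihilation terms that must be cancelled against part of the deletion sum. Here, by contrast, the \emph{entire} second sum survives and is needed: since $D(n+1)$ carries no constraint analogous to rule (2) of $B(n+1)$ or $C(n+1)$, the pairs $(\sigma,u)$ with $\sigma\in D(n-1)$ and $u\in\{1,\ldots,n\}$ are in bijection with \emph{all} $\tilde\pi\in D(n+1)$ having $(n+1,u)\in p(\tilde\pi)$. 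If you carried out the cancellation you describe --- discarding the $(\sigma,u)$ with ``$u$ a singleton to the right'' --- you would lose exactly the partitions of $D(n+1)$ in which $n+1$ is paired with such a $u$, and the induction would close on the wrong formula.

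The fix is to drop the cancellation step entirely. The first family ($n+1$ a singleton, with $Q(\tilde\pi)=Q(\pi)$ and $E(\tilde\pi)=E(\pi)$, since a leftmost singleton never lies strictly between the endpoints of a pair and hence creates no crossings counted by $Q$) together with the full second family ($n+1$ paired with $u$, with $|p|$ increasing by one to absorb the minus sign, $E$ picking up $\langle \bar{e}_{\alpha_{n+1}},e_{\alpha_u}\rangle_U$, and $Q$ picking up $\prod_{u<t\le n}q(i_u,i_t)$ exactly as you compute) already exhausts $D(n+1)$. With that correction your argument coincides with the paper's. Your closing observations --- that both sides are operator identities so no density argument is needed, and that the resulting non-commutative power series has infinite radius of convergence, giving a route to Corollaries \ref{cor:conjugates variables are in M Omega} and \ref{cor: Lipstisz conjugate} that bypasses the Bo\.zejko--Haagerup inequality \eqref{eq:Bozejko inequality} --- are correct and are precisely the use the paper makes of this proposition.
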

\begin{proof}
Remark that the terms in $Q(\pi)$ correspond to the crossings on the line of $v$ (and on $\pi(v)$ in the second product) as $v$ varies. Similar to  Remark \ref{rem: different formulas for Q(pi)}, the terms $q(i_u,i_v)$ in the product $Q(\pi)E(\pi)$  can be replaced by $q(i_{\pi(u)}, i_{\pi(v)}), q(i_{u}, i_{\pi(v)})$ etc because $E(\pi)$ is non-zero only if $i_u=i_{\pi(u)}$ for any pair $(u,\pi(u))$.

We sketch the proof below which  goes by the induction on $n$. For $n=1$, the formula is obvious. So assume  the expression to be true for some $n\geq 1$. Then calculate  by using eq. \eqref{eq:expression for product of Wick operators} and by the induction argument:
\begin{align*}
    W(e_{\alpha_{n+1}\cdots {\alpha_1}})&=W(e_{\alpha_{n+1}})W(e_{\alpha_n\cdots {\alpha_1}})-\sum_{u=1}^n\prod_{u<t\leq n}q(i_u,i_t)\langle\bar{e}_{\alpha_{n+1}},e_{\alpha_u}\rangle_UW(e_{\alpha_n\cdots\hat{\alpha}_u\cdots\alpha_1})\\
    &=\sum_{\pi\in D(n)}(-1)^{|p(\pi)|}Q(\pi)E(\pi)W(e_{\alpha_{n+1}})W^{s(\pi)}\\
    &-\sum_{u=1}^n\prod_{u<t\leq n}q(i_u,i_t)\langle\bar{e}_{\alpha_{n+1}},e_{\alpha_u}\rangle_U\sum_{\sigma\in D(n-1)}(-1)^{|p(\sigma)|}Q(\sigma,u)E(\sigma,u)W^{s(\sigma,u )}
\end{align*}
where $Q(\sigma,u), E(\sigma,u)$ and $W^{s(\sigma,u)}$ denote $Q(\sigma), E(\sigma)$ and $W^{s(\sigma)}$ respectively with emphasis that $\sigma\in D(n-1)$ acts on the word $\alpha_{n}\cdots\hat{\alpha}_u\cdots\alpha_1$. The two sums above  give nothing but the expression  
$\sum_{\tilde{\pi}\in D(n+1)}(-1)^{|p(\tilde{\pi})|}Q(\Tilde{\pi})E(\tilde{\pi})W^{s(\Tilde{\pi})}$. To see this, note that the first sum corresponds to those $\Tilde{\pi}\in D(n+1)$ where $n+1$ is a singleton. Whereas for the terms in the second sum, for each $u\in \{1,\ldots,n\}$ and $\sigma\in D(n-1)$ acting on the word $\alpha_{n}\cdots\hat{\alpha}_u\cdots\alpha_1$, one constructs $\Tilde{\pi}_{\sigma,u}\in D(n+1)$ by connecting $u$ to $n+1$ so that $\Tilde{\pi}_{\sigma,u}\setminus(n+1,u)=\sigma$. Then  observe that $|p(\Tilde{\pi}_{\sigma,u})|=1+|p(\sigma)|$ and $s(\Tilde{\pi}_{\sigma,u})=s(\sigma,u)$ so that $(-1)^{|p(\Tilde{\pi}_{\sigma,u})|}=-(-1)^{|p(\sigma)|}$,  $W^{s(\Tilde{\pi}_{\sigma,u})}=W^{s(\sigma,u)}$ and $E(\Tilde{\pi}_{\sigma,u})=\langle\bar{e}_{\alpha_{n+1}},e_{\alpha_u}\rangle E(\sigma,u)$. Moreover, the extra crossings in $\Tilde{\pi}_{\sigma,u}$ as compared to $\sigma$ occur due to the pair $(n+1,u)$ with all the lines to its left; hence  we have $Q(\Tilde{\pi}_{\sigma,u})=Q(\sigma,u)\prod_{u<t\leq n}q(i_u,i_t)$. These observations complete the proof.
\end{proof}

\begin{rem} (1). 
    For any $n\geq1$ and $\alpha_1,\ldots,\alpha_n\in [d]$, we observe that \begin{equation}\label{eq:estimate for W(e_w)}
        |W(e_{\alpha_n\cdots\alpha_1})\|\leq  n! B^n A^n 
    \end{equation}  where $B=\max_{\gamma\in [d]}\{1, \|\bar{e}_\gamma\|_U\}$ and $A=\max_{\gamma\in [d]}\|W(e_\gamma)\|\leq 2C_q^{3/2}$. This  follows by noting that $|D(n)|\leq n!$, and for any $\pi\in D(n)$, we have $|Q(\pi)|\leq 1$, $|E(\pi)|\leq B^{|p(\pi)|}\leq  B^n$ and $\|W^{s(\pi)}\|\leq A^{|s(\pi)|}\leq  A^n$.

    (2). By proposition \ref{prop:formula for W(e_w)}, we get a concrete formula for each conjugate variable $\xi_{\alpha}$ as a non-commutative power series in  $W(e_{1}), \ldots, W(e_d)$. Moreover,  we can  show that the power series has infinite radius of convergence and reprove Corollary \ref{cor:conjugates variables are in M Omega} as follows: Indeed, with the notations as in Corollary \ref{cor:conjugates variables are in M Omega}, 
for any $m\geq 1$ and $\alpha\in [d]$, we have 
\begin{align*}
    \|W(\xi_{m,\alpha})\|\leq \sum_{\beta\in [d]}\sum_{|w|=m-1}q^{\frac{m(m-1)}{2}} B\|W(L_{w\beta}^*\Bar{e}_w)\|.
\end{align*}
As in the proof of  Corollary \ref{cor: Lipstisz conjugate}, write $L_{w\beta}^*\Bar{e}_w=\sum_{|w'|=2m-1}\lambda_{w'}e_{w'}$ where the scalars $\lambda_{w'}$ satisfy
\[|\lambda_{w'}|\leq C^{3m-1}B^{m-1}\sqrt{[m-1]_q!}\] for $C=\max_{\gamma\in [d]}\|l(e_\gamma)\|$. By eq. \eqref{eq:estimate for W(e_w)} we get
\[\|W(L_{w\beta}^*\Bar{e}_w)\|\leq \sum_{|w'|=2m-1}|\lambda_{w'}|\;\|W(e_w')\|\leq d^{2m-1}C^{3m-1}B^{m-1}\sqrt{[m-1]_q!}(2m-1)!B^{2m-1}A^{2m-1}\] which further yields
\begin{align*}
    \|W(\xi_{m,\alpha})\|\leq d^{3m-1}q^{\frac{m(m-1)}{2}}C^{3m-1}B^{3m-1}\sqrt{[m-1]_q!}\;(2m-1)!\;A^{2m-1}
\end{align*}
and then ratio test says that $\sum_{m=1}^\infty\|W(\xi_{m,\alpha})\|<\infty$. By a similar argument, one can show that $\partial_\beta(W(\xi_\alpha))\in \M_T$ for all $\beta,\alpha\in [d]$ and hence $\xi_\alpha$ is Lipschitz.
\end{rem}

In terminologies of \cite{Brent}, the above results can be rephrased as follows. Let $\varphi=\langle\Omega,\cdot\Omega\rangle_T$ be the canonical normal faithful state on $\M_T$ and  let $\sigma^\varphi$ be the corresponding modular automorphism group. An element $x\in\M_T$ is  an {\em eigenoperator} (with respect to $\sigma^\varphi$) if $\sigma^\varphi_t(x)=\lambda^{it}x$ $\forall t\in\mathbb{R}$ for some $\lambda>0$. We choose the orthonormal set $\{e_1,\ldots,e_d\}$ in $\hu$ at the beginning of this Section in such a way that each $e_k\in\hc$ is an eigenvector for the analytic generator $A$ of $U_t$, so that $W(e_k)$ will be an eigenoperator for $\sigma^\varphi$ because $\sigma_t(W(e_k))=W(A^{-it}e_k)$ $ \forall t\in\mathbb{R}$, and that the set $\{W(e_1),\ldots, W(e_k)\}$ is self-adjoint (see \cite[Proof of Theorem 2.2]{Hiai} for such construction). Since the tuple $(W(e_1), \ldots, W(e_d))$ admits a system $(\xi_1,\ldots,\xi_d)$ of conjugate variables as shown in Theorem \ref{thm:existence of conjugate variables}, it has {\em finite free Fisher information} i.e. 
\begin{align*}
    \sum_{k=1}^d\|\xi_k\|_{L^2(\M_T)}^2<\infty,
\end{align*}
in the terminology of \cite[Definition 3.6]{Brent}. We summarise our discussions  in the following result:
\begin{cor}[see \cite{KSW}, Corollary 1.5]\label{cor:generators have finite free fisher information}
Let $(\h_{\mathbb{R}}=\oplus_{i\in{N}}\hr^{(i)}, U_t, (q_{ij})_{i,j\in{N}})$ be given  with $\hr$ finite dimensional (and hence  ${N}$ a finite set), the von Neumann algebra $\M_T$ equipped with the canonical state $\varphi$ is generated by a finite set $G=G^*$ of eigenoperators of the modular group of automorphisms $\sigma_t^\varphi$ with finite free Fisher information.
\end{cor}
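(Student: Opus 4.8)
The statement of Corollary~\ref{cor:generators have finite free fisher information} is essentially a bookkeeping collation of the work done earlier in the section, so the plan is to assemble the pieces rather than prove anything new. The plan is as follows.

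\textbf{Step 1: Fixing the generating set.} First I would recall the construction, carried out at the start of Section~\ref{sec:dc} and in Remark~2.13(2), of an orthonormal basis $\{e_1,\ldots,e_d\}$ of $\hu$ with each $e_k\in\hc$ an eigenvector of the analytic generator $A$. Following \cite[Proof of Theorem 2.2]{Hiai} (or \cite{Sh1}), such a basis can moreover be arranged so that the set $\{W(e_1),\ldots,W(e_d)\}$ is self-adjoint, by pairing up eigenvectors for reciprocal eigenvalues $\lambda,\lambda^{-1}$ and replacing them by suitable real and imaginary parts. I would then note that $\sigma^\varphi_t(W(e_k))=W(A^{-it}e_k)=\lambda_k^{-it}W(e_k)$ whenever $Ae_k=\lambda_k e_k$, so each $W(e_k)$ is an eigenoperator of $\sigma^\varphi$. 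Finally, as observed just before the definition of $A_k$ in Section~\ref{sec:dc}, the unital $\ast$-algebra generated by $\{W(e_1),\ldots,W(e_d)\}$ is $\C[A_1,\ldots,A_d]$, which is strongly dense in $\M_T$; hence $G:=\{W(e_1),\ldots,W(e_d)\}=G^\ast$ generates $\M_T$.

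\textbf{Step 2: Finite free Fisher information.} Here I would invoke Theorem~\ref{thm:existence of conjugate variables}, which produces the vectors $\xi_\alpha=D_\alpha^\ast\Omega\in L^2(\M_T)$, $1\le\alpha\le d$, and establishes via the proposition preceding the ``Existence of Dual variables'' subsection that $(\xi_1,\ldots,\xi_d)$ is a system of conjugate variables for $(A_1,\ldots,A_d)=(W(e_1),\ldots,W(e_d))$. Since each $\xi_\alpha\in\FockTH=L^2(\M_T)$ is in particular a genuine $L^2$-vector, the quantity $\sum_{k=1}^d\|\xi_k\|_{L^2(\M_T)}^2$ is finite, which is precisely the statement that the tuple has finite free Fisher information in the sense of \cite[Definition 3.6]{Brent}. (If one wants the sharper information that the $\xi_k$ actually lie in $\M_T\Omega$ one cites Corollary~\ref{cor:conjugates variables are in M Omega}, and for the Lipschitz property Corollary~\ref{cor: Lipstisz conjugate}, but these are not needed for the present statement.) Combining Steps 1 and 2 gives the corollary.

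\textbf{Main obstacle.} There is no real obstacle at the level of this corollary; all the analytic content — boundedness of the free annihilation operators (Lemma~\ref{lem:free annihilation operators are bounded}), the combinatorial identification of $D_\alpha$ with a dual system (Proposition~\ref{prop: Algebraic formula for dual variable D_alpha}), and the norm estimate $\sum_m\|\xi_{m,\alpha}\|<\infty$ via the ratio test in the proof of Theorem~\ref{thm:existence of conjugate variables} — has already been carried out. The only point requiring a little care is the compatibility of two demands on the basis $\{e_k\}$: that it consist of $A$-eigenvectors \emph{and} that $\{W(e_k)\}$ be self-adjoint. This is handled exactly as in Hiai's construction by grouping $e$ with $\bar e$; since $\overline{Ae}=A^{-1}\bar e$, the spectral subspaces for $\lambda$ and $\lambda^{-1}$ are exchanged by conjugation, so one can choose an orthonormal basis of real/imaginary combinations that is simultaneously adapted to $A$ and closed under conjugation, yielding a self-adjoint family of Wick operators. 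With that dealt with, the corollary follows immediately.
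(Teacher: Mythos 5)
Your proposal is correct and follows essentially the same route as the paper: choose the orthonormal basis of $A$-eigenvectors so that $\{W(e_1),\ldots,W(e_d)\}$ is self-adjoint (citing Hiai's construction), observe $\sigma^\varphi_t(W(e_k))=W(A^{-it}e_k)$ to get eigenoperators, and deduce finite free Fisher information directly from the existence of the conjugate system in Theorem \ref{thm:existence of conjugate variables}. Your extra remark on reconciling the eigenvector condition with self-adjointness via conjugation exchanging the $\lambda$ and $\lambda^{-1}$ eigenspaces is exactly the point the paper delegates to \cite[Proof of Theorem 2.2]{Hiai}.
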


\section{Factoriality and non-injectivity} \label{sec:main}
In this section, we prove that the mixed $q$-Araki-Woods von Neumann algebra $\M_T$ is a factor i.e. its center $\M_T\cap\M_T'=\C$, and that $\M_T$ is non-injective. Recall that a von Neumann algebra $M\subseteq\mathbf{B}(H)$ is \emph{injective} if there exists an idempotent from $\mathbf{B}(H)$ onto $M$ of norm $1$. Moreover we also discuss the type classifications of the algebras $\M_T$. Connes \cite{Con} defined the {\em $S$-invariant} of a factor $M$ with a separable predual  to be the intersection of the spectra of the modular operators $\Delta_\phi$ of all normal faithful states $\phi$. The factor $M$ is of type $\mathrm{III}$ if and only if $0\in S(M)$. In this case, the type classification of $M$ is given as follows:
\begin{align*}
    S(M)=\left\{\begin{array}{ll}
    \lambda^{\mathbb{Z}}\cup\{0\} & \mbox{if } M \mbox{ is of type } \mathrm{III}_\lambda, 0<\lambda<1,\\
    \left[0,\infty\right)& \mbox{if } M \mbox{ is of type }\mathrm{III}_1,\\
         \{0,1\}& \mbox{if } M \mbox{ is of type } \mathrm{III}_0.
    \end{array}\right.
\end{align*}
Recall that the centralizer of $M$ with respect to a normal faithful state $\phi$ is defined by $M^\phi=\{x\in M;  \phi(xy)=\phi(yx)\;\forall y\in M\}$. If the centralizer $M^\phi$ is a factor, then $S(M)$ is just the spectrum of  the modular operator $\Delta_{\phi}$ corresponding to $\phi$.
We now state the following abstract results of Nelson \cite{Brent}:

\begin{tw}[\cite{Brent}, Theorem A, Theorem B]\label{Thm:Nelson}
Let $M$ be a von Neumann algebra with a faithful normal state $\phi$. Suppose $M$ is generated by a finite set $G=G^{\ast}$, $|G|\geqslant 2$ of eigenoperators of the modular group $\sigma^{\phi}$ with finite free Fisher information. Then $(M^{\phi})^{\prime} \cap M = \mathbb{C}$. In particular, $M^{\phi}$ is a $\mathrm{II}_1$ factor and if $H < \mathbb{R}^{\times}_{\ast}$ is the closed subgroup generated by the eigenvalues of $G$ then $M$ is a factor of type
\[
\left\{\begin{array}{ll} \mathrm{III}_1 & \text{ if } H=\mathbb{R}^{\times}_{\ast} \\ \mathrm{III}_{\lambda} & \text{ if } H= \lambda^{\mathbb{Z}}, 0<\lambda<1 \\ \mathrm{II}_1 & \text{ if } H=\{1\}.  \end{array}   \right.
\]
Furthermore $M^\varphi$ is full, and if $M$ is a type $\mathrm{III}_{\lambda}$ factor, $0<\lambda<1$, then $M$ is full.
\end{tw}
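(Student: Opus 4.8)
Strictly speaking Theorem \ref{Thm:Nelson} is quoted verbatim from \cite[Theorems A and B]{Brent} and its ``proof'' is a reference to that paper; the plan here is to outline the mechanism and, above all, how it is applied to $\M_T$. Finite free Fisher information for $G=\{X_1,\dots,X_n\}$ means precisely that every $X_k$ has a conjugate variable $\xi_k\in L^2(M,\phi)$, equivalently $\mathds{1}\otimes\mathds{1}\in\dom(\partial_k^*)$ for the state-twisted free difference quotient $\partial_k\colon\C[G]\to M\overline{\otimes}M^{\op}$. First I would invoke the standard consequence --- the same one used through Voiculescu's argument in the proof of Theorem \ref{thm:existence of conjugate variables} and in \cite{MS} --- that $\mathds{1}\otimes\mathds{1}\in\dom(\partial_k^*)$ forces each $\partial_k$ to be closable as an unbounded operator $L^2(M)\to L^2(M\overline{\otimes}M^{\op})$ whose range sits in the coarse $M$--$M$ correspondence. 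Writing $\delta=(\partial_1,\dots,\partial_n)$ one then has a closable real derivation, the Dirichlet form $\mathcal E(x)=\sum_k\|\partial_k x\|_2^2$, and a ``number operator'' $\mathcal N=\delta^*\overline{\delta}$ on $L^2(M)$.

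The crux is the identity $(M^\phi)'\cap M=\C$. Note $M^\phi$ is finite since $\phi$ restricts to it as a faithful normal trace, and --- because each $X_k$ is an eigenoperator of $\sigma^\phi$ --- the modular group scales the monomials in $G$, so $M^\phi$ records this grading and the $\partial_k$ are compatible with it. I would argue by contradiction: a non-scalar $a\in(M^\phi)'\cap M$ produces, by an averaging argument inside the finite algebra $M^\phi$, a bounded net of unitaries in $M^\phi$ asymptotically central for $\{X_1,\dots,X_n\}$ in $\|\cdot\|_{2,\phi}$ yet bounded away from $\C\mathds{1}$; feeding it into a quantitative version of the conjugate-variable relations (Dabrowski's inequality) and using closability of $\delta$ forces this net to become asymptotically scalar, a contradiction. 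This is the non-tracial incarnation of Dabrowski's theorem that finite free Fisher information rules out property $\Gamma$, and \emph{the hard part is making the Dabrowski-type estimates survive the loss of the trace}: one must carry the modular weights attached to the eigenvalues of $G$ through every inequality, which is exactly where the eigenoperator hypothesis --- and $|G|\geq 2$ --- are used. Once $(M^\phi)'\cap M=\C$, $M^\phi$ is a factor; being finite with separable predual and (as $|G|\geq 2$) infinite-dimensional, it is a $\mathrm{II}_1$ factor, and since the centralizer is a factor one has $S(M)=\sigma(\Delta_\phi)$. With the state chosen so that $\Delta_\phi$ is diagonalised on the generators, $\sigma(\Delta_\phi)$ is the closed subgroup of $\mathbb{R}^{\times}_{\ast}$ generated by the eigenvalues of $G$, giving the $\mathrm{III}_1$, $\mathrm{III}_\lambda$, $\mathrm{II}_1$ trichotomy. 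Fullness of $M^\phi$ restates the absence of nontrivial central sequences found above, and when $M$ is of type $\mathrm{III}_\lambda$, $0<\lambda<1$, fullness of $M$ follows from that of $M^\phi$ together with discreteness of the modular spectrum via Connes' description of full type $\mathrm{III}_\lambda$ factors.

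To reach the paper's main theorem one applies this with $M=\M_T$, $\phi=\varphi$ and $G=\{W(e_1),\dots,W(e_d)\}$, which by Corollary \ref{cor:generators have finite free fisher information} is a self-adjoint generating set of eigenoperators (the $e_k$ being chosen as eigenvectors of the analytic generator $A$) with finite free Fisher information, and whose eigenvalues are precisely those of $A$; this settles factoriality and the type for $\dim\hr<\infty$. For $\dim\hr=\infty$ with $U_t$ almost periodic, $\M_T$ is the inductive limit of the finite-dimensional subalgebras attached to an increasing family of finite-dimensional $U_t$-invariant subspaces of $\hr$, and factoriality is transferred from them by an inductive-limit argument (the centre of $\M_T$ commutes with every such subalgebra). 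Non-injectivity in all cases reduces to exhibiting a single non-injective subalgebra admitting a $\varphi$-preserving conditional expectation: if $U_t$ is nontrivial, choose an eigenvector $e$ of $A$ with eigenvalue $\lambda\neq 1$ and let $\kr\subseteq\hr$ be the two-dimensional $U_t$-invariant subspace associated with $e$ and $\bar e$; the subalgebra of $\M_T$ generated by $\{s(\xi):\xi\in\kr\}$ is, by the finite-dimensional case, a full type $\mathrm{III}_\lambda$ factor, hence non-injective, and therefore so is $\M_T$; if $U_t$ is trivial then $\M_T=\M_T^\varphi$ is itself a full $\mathrm{II}_1$ factor and non-injectivity is immediate.
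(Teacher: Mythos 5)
This statement is imported verbatim from Nelson's paper (cited as \cite{Brent}, Theorems A and B) and the paper offers no proof beyond that citation, which you correctly recognize; your sketch of the Dabrowski-type mechanism behind Nelson's argument and of how the result is then applied to $\M_T$ in Theorems \ref{thm:factor} and \ref{thm:noninj} is consistent with the paper's treatment. Nothing further is required here, since the burden of proof rests with the cited reference rather than with this paper.
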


Recall that a factor $M$ is {\em full} if for any bounded sequence $x_{n}$ in $M$ with $\|\phi(\cdot x_n)-\phi(x_n\cdot)\|\to 0$ for all normal states $\phi$ on $M$, there is a bounded sequence $\lambda_n\in\C$ such that $x_n-\lambda_n\to0$ $*$- strongly. Remark that a full factor is necessarily non-injective.

We mention in the passing that if $U_t$ is a strongly continuous orthogonal representations on a real Hilbert space $\hr$, then $\hr$ decomposes into a direct sum $\kr\oplus\lr$  of invariant subspaces of $U_t$, where $\mathsf{K}_U$ is  densely spanned by the eigenvectors of $U_t$. We call the part $(\kr, {U_t}_{|_{\kr}})$ as the {\em almost periodic} and the part $(\lr, {U_t}_{|_{\lr}})$ as {\em weakly mixing}. Note that if $\lr$ is non-zero, then it must be infinite-dimensional. Also the almost periodic part has a  nice decomposition as representations on  a direct sum of either one or two dimensional real spaces (see \cite{Sh1}). In particular if $\hr=\oplus_{i\in{N}}\hr^{(i)}$ is finite dimensional then each $\hr^{(i)}$ is almost periodic, hence decomposes as a direct sum of one or two dimensional invariant subspaces.
We are now ready to give our main results.  As before, we denote below  the canonical normal faithful state $\langle\Omega, \cdot\Omega\rangle_T$ on $\M$ by $\varphi$.

\begin{tw} \label{thm:factor}
Let $(\h_{\mathbb{R}}=\oplus_{i\in{N}}\hr^{(i)}, U_t, (q_{ij})_{i,j\in{N}})$ be given for some countable set $N$, with $\dim(\h_{\br})\geqslant 2$. Then $\M:=\Gamma_T(\hr, U_t)$ is a factor. Moreover, if $G < \mathbb{R}^{\times}_{\ast}$ is the closed subgroup generated by the spectrum of $A$ then $\mlg$ is a factor of type
\[
\left\{\begin{array}{ll} \mathrm{III}_1 & \text{ if } G=\mathbb{R}^{\times}_{\ast} \\ \mathrm{III}_{\lambda} & \text{ if } G= \lambda^{\mathbb{Z}}, 0<\lambda<1 \\ \mathrm{II}_1 & \text{ if } G=\{1\}.  \end{array}   \right.
\]
\end{tw}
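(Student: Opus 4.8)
The plan is to derive the finite-dimensional case directly from Nelson's theorem and then to reach the general almost periodic case by an inductive limit; when $U_t$ has a nonzero weakly mixing part the statement is already contained in \cite{BKM}, so throughout one may assume $U_t$ is almost periodic. For $d:=\dim\hr<\infty$ I would invoke Corollary \ref{cor:generators have finite free fisher information}: it gives that $(\M,\varphi)$ is generated by a finite self-adjoint set $G$ of eigenoperators of the modular group $\sigma^{\varphi}$ with finite free Fisher information, and from the construction $|G|=d\geqslant 2$. Theorem \ref{Thm:Nelson} then yields $(\M^{\varphi})'\cap\M=\C$; since $Z(\M)$ and $Z(\M^{\varphi})$ are both contained in $(\M^{\varphi})'\cap\M$, both $\M$ and $\M^{\varphi}$ are factors, the latter of type $\mathrm{II}_1$, and the type of $\M$ is governed by the closed subgroup of $\mathbb{R}^{\times}_{\ast}$ generated by the eigenvalues of $G$. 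As $G=\{W(e_1),\dots,W(e_d)\}$ with $\{e_1,\dots,e_d\}$ a basis of $\hu$ made of eigenvectors of $A$, and $A$ has pure point spectrum in finite dimensions, this subgroup is exactly the group $G$ of the statement, settling the finite-dimensional case.

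For general $\hr$ (with $U_t$ almost periodic), the next step is to set up the inductive system. Since each $\hri$ splits as a direct sum of $U_t$-invariant subspaces of dimension at most $2$ (\cite{Sh1}), I would choose an increasing sequence of finite-dimensional $U_t$-invariant subspaces $\mathsf{H}_{\mathbb{R},n}\subseteq\hr$ with $\mathsf{H}_{\mathbb{R},n}=\bigoplus_i(\mathsf{H}_{\mathbb{R},n}\cap\hri)$, $\dim\mathsf{H}_{\mathbb{R},n}\geqslant 2$, and $\overline{\bigcup_n\mathsf{H}_{\mathbb{R},n}}=\hr$. Restricting the data produces mixed $q$-Araki-Woods algebras $\M_n$ with canonical states $\varphi_n$, covered by the previous paragraph. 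Functoriality of the $T$-deformed second quantization (\cite{BKM}) — the orthogonal projection $\hr\to\mathsf{H}_{\mathbb{R},n}$ is $U_t$-equivariant, hence extends to a projection of $\hu$ commuting with $T$ and passing to $\FockTH$ — realizes each $\M_n$ inside $\M$ as the range of a normal $\varphi$-preserving conditional expectation $E_n$, which commutes with $\sigma^{\varphi}$ (so $\sigma^{\varphi}|_{\M_n}=\sigma^{\varphi_n}$ and $\M_n\cap\M^{\varphi}=\M_n^{\varphi_n}$) and satisfies $E_n(x)\Omega\to x\Omega$ in $L^2(\M)$, these being the orthogonal projections of $x\Omega$ onto the increasing subspaces $L^2(\M_n)$.

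Granting this, the factoriality part is soft. Given $x\in(\M^{\varphi})'\cap\M$, for any $y\in\M_n^{\varphi_n}\subseteq\M^{\varphi}$ one has $E_n(x)y=E_n(xy)=E_n(yx)=yE_n(x)$, so $E_n(x)\in(\M_n^{\varphi_n})'\cap\M_n=\C$ by the finite-dimensional case; since $E_n$ is $\varphi$-preserving, $E_n(x)=\varphi(x)\mathds{1}$, whence $x\Omega=\lim_n E_n(x)\Omega=\varphi(x)\Omega$ and, $\Omega$ being separating, $x=\varphi(x)\mathds{1}$. Thus $(\M^{\varphi})'\cap\M=\C$, so $\M$ is a factor and $\M^{\varphi}$ is a finite factor containing the $\mathrm{II}_1$ factors $\M_n^{\varphi_n}$, hence itself $\mathrm{II}_1$. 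The centralizer being a factor, $S(\M)$ equals the spectrum of $\Delta_{\varphi}$; restricting $\Delta_{\varphi}$ to the particle subspaces of $\FockTH$, on which it acts as amplifications of powers of $A$, one checks that this spectrum is the closed subgroup $G$ generated by the spectrum of $A$, adjoined with $0$ exactly when $G\neq\{1\}$. Comparing with the description of the $S$-invariant recalled above gives the type $\mathrm{III}_1$, $\mathrm{III}_{\lambda}$ or $\mathrm{II}_1$ according as $G=\mathbb{R}^{\times}_{\ast}$, $G=\lambda^{\mathbb{Z}}$ or $G=\{1\}$.

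The part I expect to require real care is not the soft inductive argument but the construction of the inductive system itself — verifying that the cut-down projections commute with $T$ and induce genuine normal $\varphi$-preserving conditional expectations $\M\to\M_n$ — together with the identification of the spectrum of $\Delta_{\varphi}$ in the infinite-dimensional almost periodic regime; both are of the ``standard but technical'' flavour, the former being taken from \cite{BKM} and the latter being the familiar Araki-Woods modular computation.
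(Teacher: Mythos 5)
Your proposal is correct and follows essentially the same route as the paper: the finite-dimensional case via Corollary \ref{cor:generators have finite free fisher information} and Nelson's Theorem \ref{Thm:Nelson}, the weakly mixing case via \cite{BKM}, and the infinite-dimensional almost periodic case via an inductive limit of finite-dimensional subalgebras with $\varphi$-preserving conditional expectations (the paper cites \cite[Proposition 6.1]{BKM} for their existence), showing $(\M^\varphi)'\cap\M=\C$ and then reading off the type from the spectrum of $\Delta_\varphi$ as in \cite[Theorem 3.13]{BKM}. The only cosmetic difference is that you spell out the commutation $E_n(x)y=yE_n(x)$ via the bimodule property, which the paper leaves implicit.
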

\begin{proof}
If $\dim\hr<\infty$ then the factoriality and type classification of $\M$ follow from Corollary \ref{cor:generators have finite free fisher information} and Theorem \ref{Thm:Nelson}. Therefore assume that $\dim\hr=\infty$.
    If the weakly mixing part of $U_t$ is non-trivial, then $\M$ is  a type III$_1$ factor 
 by \cite[Theorem 5.9]{BKM}.
 
 So we may assume that  $U_t$ is almost periodic  on an infinite dimensional space $\hr$. We use an inductive limit argument to prove factoriality and type classification; this may be apparent to experts but we provide details for completeness. As mentioned above, one can find an increasing sequence $\hr (n)$ of $U_t$-invariant finite-dimensional subspaces of $\hr$ such that each $\hr(n)$ is compatible with the direct sum i.e. $\hr(n)=\oplus_{i\in{N}}(\hr(n)\cap \hr^{(i)})$, and that their union spans a dense subspace of $\hr$. Denote the von Neumann algebra $\Gamma_T(\hr(n), {U_t}_{|_{\hr(n)}})$ by $\M_n$ and the restriction of the state $\varphi$ to $\M_n$ by $\varphi_n$.  Through our assumption, we have  implication that the unital $\ast$-algebra $\cup_{n\geq 1}\M_n$  is strongly dense in $\M$. An appeal to \cite[Proposition 6.1]{BKM} entail that each $\M_n$ is naturally embedded in $\M$ as a subalgebra and that there is a $\varphi$-preserving conditional expectation $E_n$ from $\M$ onto $\M_n$, so that $\M_n$ is invariant under the modular automorphism group $\sigma_t^\varphi$ of the  state $\varphi$. Observe that $E_n(x)\to x$ strongly for all $x\in\M$. One  easily verifies that ${\M_n^{\varphi_n}}\subseteq \M^\varphi$.
 
 We now show that $\M^\varphi$ is a factor. Indeed 
 if $x\in ({\M^\varphi})'\cap \M$, then $E_n(x)\in ({\M_n^{\varphi_{n}}})'\cap \M_n$ for all $n\geq 1$; but we know from Corollary \ref{cor:generators have finite free fisher information} and Theorem \ref{Thm:Nelson}  that
$({\M_n^{\varphi_{n}}})'\cap \M_n=\C$ (since $\dim\hr(n)<\infty$), therefore $E_n(x)=\varphi_n(E_n(x))=\varphi(x)$ for all $n\geq 1$ which finally implies that $x=\varphi(x)$. We have thus shown that $({\M^\varphi})'\cap\M=\C$ so that $\M^\varphi$ is a factor; hence  $\M$ is a factor. Since $\M^\varphi$ is a factor, the  type classification of $\M$ follows from the spectral data of the modular operator $\Delta_\varphi$ of the state $\varphi$, which is related to the analytic generator $A$ as given  in \cite[Theorem 3.13]{BKM}.
\end{proof}

We now show that the factor $\M_T$ is non-injective with the same data as in Theorem \ref{thm:factor}.

\begin{tw} \label{thm:noninj}
The factor $\M=
\Gamma_T(\hr, U_t)$ is not injective as soon as $\dim(H_{\br}) \geqslant 2$.
\end{tw}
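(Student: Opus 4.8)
The plan is to reduce the infinite-dimensional case to the finite-dimensional one exactly as in the proof of Theorem \ref{thm:factor}, and to handle the finite-dimensional case by invoking the ``furthermore'' clause of Theorem \ref{Thm:Nelson} together with a short additional argument covering the $\mathrm{III}_1$ and $\mathrm{II}_1$ situations. Concretely, suppose first that $\dim\hr<\infty$. By Corollary \ref{cor:generators have finite free fisher information}, $\M$ is generated by a finite self-adjoint set $G$ of eigenoperators of $\sigma^\varphi$ with finite free Fisher information, and $|G|\geq 2$ since $\dim\hr\geq2$. Theorem \ref{Thm:Nelson} then says $\M^\varphi$ is a full $\mathrm{II}_1$ factor. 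A full factor is never injective, and if $\M$ were injective then so would be its corner $\M^\varphi$ (a von Neumann subalgebra which is the range of a normal conditional expectation, e.g.\ the $\varphi$-preserving one onto the centralizer); hence $\M$ is non-injective. When $\M$ is of type $\mathrm{III}_\lambda$, $0<\lambda<1$, Theorem \ref{Thm:Nelson} directly gives that $\M$ itself is full, so the conclusion is immediate; in the $\mathrm{III}_1$ and $\mathrm{II}_1$ cases the corner argument via $\M^\varphi$ is what is used.

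Next, suppose $\dim\hr=\infty$. If the weakly mixing part of $U_t$ is non-trivial, then non-injectivity already follows from \cite[Theorem 5.9]{BKM} (there $\M$ is shown to be a full type $\mathrm{III}_1$ factor). So we may assume $U_t$ is almost periodic on the infinite-dimensional space $\hr$. As in the proof of Theorem \ref{thm:factor}, pick an increasing sequence $\hr(n)$ of finite-dimensional $U_t$-invariant subspaces, compatible with the direct sum decomposition, whose union is dense, and set $\M_n=\Gamma_T(\hr(n),{U_t}|_{\hr(n)})$, a subalgebra of $\M$ onto which there is a $\varphi$-preserving conditional expectation $E_n$ (by \cite[Proposition 6.1]{BKM}), with $E_n(x)\to x$ strongly. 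If $\M$ were injective, then each $\M_n$, being the range of the normal conditional expectation $E_n\colon\M\to\M_n$, would be injective as well. But for $n$ large enough, $\dim\hr(n)\geq2$, so by the finite-dimensional case already proved $\M_n$ is non-injective — a contradiction. Hence $\M$ is non-injective.

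The only mild subtlety, and the step I expect to require the most care, is the permanence claim ``injectivity of $\M$ implies injectivity of the range of a normal faithful conditional expectation $E\colon\M\to\M_0$.'' This is standard (given a norm-one projection $\mathcal{E}\colon\mathbf{B}(H)\to\M$, the composition $E\circ\mathcal{E}$ restricted appropriately gives a norm-one projection onto $\M_0$, using that $\M_0\subseteq\M$ and $E$ is a conditional expectation), but one should state it cleanly and make sure it applies both to the corner $\M^\varphi\subseteq\M$ in the finite-dimensional step and to $\M_n\subseteq\M$ in the inductive-limit step. Everything else is a direct assembly of Corollary \ref{cor:generators have finite free fisher information}, Theorem \ref{Thm:Nelson}, and the already-established arguments of Theorem \ref{thm:factor}.
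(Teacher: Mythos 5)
Your proposal is correct and follows essentially the same route as the paper: in both cases non-injectivity is deduced from the fullness statements in Theorem \ref{Thm:Nelson} (applied via Corollary \ref{cor:generators have finite free fisher information}), transported to the ambient algebra through the permanence of injectivity under normal conditional expectations. The only cosmetic difference is that the paper restricts at once to a single two-dimensional $U_t$-invariant subspace $\mathsf{K}_\R$ (splitting into the trivial and ergodic cases for ${U_t}|_{\mathsf{K}_\R}$), whereas you first settle all finite-dimensional cases via the centralizer $\M^\varphi$ and then run the exhausting sequence $\M_n$; both are valid, and your ``mild subtlety'' about ranges of conditional expectations is exactly the standard fact the paper also relies on.
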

\begin{proof}
If $U_t$ has non-trivial weakly mixing part, then $\M$ is non-injective by \cite[Theorem 8.4]{BKM}. So we may assume that $U_t$ is almost periodic.
As in the proof of \cite[Theorem 3.5]{KSW},  we will find a  non-injective subalgebra with a conditional  expectation from the ambient factor $\M$. So let $\mathsf{K}_\R$ be a two-dimensional invariant subspace of $U_t$ compatible with the direct sum decomposition i.e. $\mathsf{K}_\R=\oplus_{i\in {N}}(\mathsf{K}_\R\cap \hr^{(i)})$. As in the proof of Theorem \ref{thm:factor} above, there is a conditional expectation from $\M$ to the subalgebra $\M_1:=\Gamma_T(\mathsf{K}_\R,{U_t}_{|_{\mathsf{K}_\R}})$.  Now if   ${U_t}_{|_{\mathsf{K}_\R}}$ is trivial then $\M_1=\M_1^{\varphi_{|_{\M_1}}}$ which in conjugation with Corollary \ref{cor:generators have finite free fisher information} and Theorem \ref{Thm:Nelson} yields that $\M_1$ is full and hence $\M_1$ is non-injective. On the other hand if ${U_t}_{|_{\mathsf{K}_\R}}$ is ergodic then by Theorem \ref{thm:factor}, $\M_1$ is a type III$_\lambda$ factor for some $0<\lambda<1$; the same combination of Corollary \ref{cor:generators have finite free fisher information} and Theorem \ref{Thm:Nelson} then concludes that $\M_1$ is full and hence non-injective. The proof is now complete.
\end{proof}

\smallskip
\noindent {\bf Acknowledgments.} 
The author is grateful to  Adam Skalski and Mateusz Wasilewski for carefully reading the initial draft and giving  valuable comments. He also thanks the anonymous referee for carefully reading the manuscript and providing valuable suggestions.  The author is  partially supported by the National Science Center (NCN) grant no. 2020/39/I/ST1/01566.

\end{document}